\colorlet{shadecolor}{gray!25}   
\renewenvironment{leftbar}{%
  \MakeFramed {\advance\hsize-\width \FrameRestore}}%
{\endMakeFramed}
\theoremstyle{plain}
\newtheorem{lemma}{Lemma}[section]
\newtheorem{proposition}[lemma]{Proposition}
\newtheorem{theorem}[lemma]{Theorem}
\newtheorem{corollary}[lemma]{Corollary}
\theoremstyle{definition}
\newtheorem{definition}[lemma]{Definition}
\newtheorem{remark}[lemma]{Remark}
\newtheorem{exampleus}[lemma]{Example}
\newenvironment{example}[1][]
    {\begin{leftbar} \vspace{-9pt} \begin{exampleus}[#1]}
    {\end{exampleus} \vspace{-9pt}\end{leftbar}}
\newcommand{\N}{\mathbb{N}}
\newcommand{\Z}{\mathbb{Z}}
\newcommand{\R}{\mathbb{R}}
\newcommand{\id}{\mathrm{id}}
\newcommand{\gvec}{\mathsf{gVec}}
\newcommand{\gcas}{\mathsf{gcAs}}
\newcommand{\gman}{\mathsf{gMan}}
\newcommand{\Set}{\mathsf{Set}}
\newcommand{\Op}{\mathsf{Op}}
\newcommand{\scrF}{\mathscr{F}}
\newcommand{\scrS}{\mathscr{S}}
\newcommand{\scrX}{\mathscr{X}}
\newcommand{\scrP}{\mathscr{P}}
\newcommand{\scrQ}{\mathscr{Q}}
\newcommand{\scrL}{\mathscr{L}}
\newcommand{\calE}{\mathcal{E}}
\newcommand{\calF}{\mathcal{F}}
\newcommand{\calI}{\mathcal{I}}
\newcommand{\calJ}{\mathcal{J}}
\newcommand{\calM}{\mathcal{M}}
\newcommand{\calN}{\mathcal{N}}
\newcommand{\calS}{\mathcal{S}}
\newcommand{\frJ}{\mathfrak{J}}
\newcommand{\rmT}{\mathrm{T}}
\newcommand{\fP}{\mathbf{P}}
\newcommand{\fp}{\mathbf{p}}
\newcommand{\fr}{\mathbf{r}}
\newcommand{\fF}{\mathbf{F}}
\newcommand{\fE}{\mathbf{E}}
\def\f1{\mathbf{1}}
\newcommand{\fG}{\mathbf{G}}
\newcommand{\rest}[2]{\left.#1\right|_{#2}}
\newcommand{\parest}[2]{\rest{\left(#1\right)}{#2}}
\newcommand{\bld}[1]{\mathbf{#1}}
\newcommand{\uline}[1]{\underline{#1}}
\newcommand{\dd}[2]{\frac{\partial #1}{\partial
 #2}}
\newcommand{\ol}{\overline}
\newcommand{\ul}{\underline}
\newcommand{\ssm}{\smallsetminus}
\DeclareMathOperator{\gdim}{gdim}
\DeclareMathOperator{\grk}{grk}
\DeclareMathOperator{\im}{im}
\DeclareMathOperator{\Hom}{Hom}
\DeclareMathOperator{\supp}{supp}
\DeclareMathOperator{\Sff}{Sff}
\DeclareMathOperator{\sff}{sff}
\newcommand{\gR}{\mathrm{g}\mathbb{R}}
\newcommand{\add}{\Sigma}
\newcommand{\1}{\mathbbm{1}}
\newcommand{\svbun}{\mathsf{sVBun}}
\newcommand{\gvbun}{\mathsf{gVBun}}
\newcommand{\avbun}{\mathsf{aVBun}}
\newcommand{\cifty}{C^\infty}
\newcommand{\clin}{C^{\mathrm{lin}}}
\newcommand{\TM}{\mathrm{T}\mathcal{M}}
\title{Graded Vector Bundles}
\author{JV, RS}
\begin{document}
\begin{flushright}
\today
\end{flushright}
\vspace{0.7cm}
\begin{center}

\baselineskip=13pt {\Large \bf{Threefold Nature of Graded Vector Bundles}\\}
 \vskip0.5cm
 {\large{Rudolf Šmolka$^{1}$, Jan Vysoký$^{2}$}}\\
 \vskip0.6cm
$^{1}$\textit{Faculty of Nuclear Sciences and Physical Engineering, Czech Technical University in Prague\\ Břehová 7, 115 19 Prague 1, Czech Republic, smolkrud@fjfi.cvut.cz}\\ 
$^{2}$\textit{Faculty of Nuclear Sciences and Physical Engineering, Czech Technical University in Prague\\ Břehová 7, 115 19 Prague 1, Czech Republic, vysokjan@fjfi.cvut.cz}\\
\vskip0.3cm
\end{center}

\begin{abstract}
Graded vector bundles over a given $\Z$-graded manifold can be defined in three different ways: certain sheaves of graded modules over the structure sheaf of the base graded manifold, finitely generated projective graded modules over the algebra of global functions on the base graded manifold, or locally trivial graded manifolds with a suitable linear structure. We argue that all three approaches are the same. More precisely, the respective categories are proved to be equivalent.
\end{abstract}

{\textit{Keywords}: Graded vector bundles, graded manifolds, graded modules, Serre--Swan theorem,

\tableofcontents

\section{Introduction}
Vector bundles form a backbone of modern differential geometry. The notion formalizes the idea of gluing a vector space to each point of a smooth manifold, allowing to describe vector space valued quantities on manifolds, e.g. vector fields, differential forms, differential operators, etc. Vector bundles allow for a more conceptual description of connections and various characteristic classes. Vector bundles form the foundation of K-theory, an enormously successful theory (proving e.g. Bott periodicity or Atiyah-Singer index theorem). Last but not least, vector bundles are necessary in every modern description of theoretical physics.

In ordinary differential geometry, it is most natural to view vector bundles as particular surjective submersions $\pi: E \rightarrow M$, where for sufficiently small open sets $U$ in $M$, their inverse image $\pi^{-1}(U)$ becomes diffeomorphic with the Cartesian product of $U$ with a fixed vector space $V$ (together with some additional compatibility conditions). We call this the \textit{geometric approach} to vector bundles. In this perspective, local sections of $E$ are simply smooth maps from open subsets of $M$ to the total space $E$, forming a right inverse to the projection.

It turns out that sets of local sections inherit a rich algebraic structure. Altogether, they form a sheaf $\Gamma_{E}$ of \textit{local sections of $E$} on the base manifold $M$. This is a sheaf of modules over the sheaf of smooth functions on $M$, having the additional property that every section can be locally decomposed as a unique finite combination of some local frame. One says that $\Gamma_{E}$ is locally freely and finitely generated. It turns out that every such sheaf can be obtained from some vector bundle and this vector bundle is unique up to an isomorphism. One can thus choose to describe vector bundles as certain sheaves of modules, hence we call it the \textit{sheaf approach}. This brings us right on the border between differential geometry and commutative algebra. 

Global sections of a given vector bundle $\pi: E \rightarrow M$ form a module over the algebra of global smooth functions on $M$. This module is finitely generated and projective. It turns out that \textit{every} such module is isomorphic to a module of global sections of some vector bundle, unique up to an isomorphism. This is the result of the famous \textit{Serre--Swan theorem} \cite{serre1955faisceaux,swan1962vector}, and it gives a third way of viewing vector bundles, which we choose to call the \textit{algebraic approach}.

In supergeometry, vector (super)bundles are usually defined using the sheaf approach, see \cite{manin1988gauge}, \cite{deligne1999quantum}. Some references contain both sheaf and geometrical definition and comment on their equivalence \cite{bruzzo1988supermanifolds, Balduzzi_2011, bartocci2012geometry}. Using a geometric approach similar to the one in this paper, authors in \cite{bruceGrabowski2024} define vector and principal bundles in the category of $\Z^n_2$-graded manifolds, and authors in \cite{grabowskaGrabowski2024} generalize vector superbudles into so-called homogeneity supermanifolds. Recently there appeared a version of Serre--Swan theorem for compact supermanifolds \cite{morye2025serre}. Also note that the Serre--Swan theorem for general non-compact smoooth manifolds first appeared in \cite{nestruev2003smooth}.

In recent years, there was a reinvigoration of interest in $\Z$-graded manifolds with local coordinates of arbitrary degrees \cite{vysoky2022graded, kotov2024category, fairon2017introduction}, following on the non-negatively graded (or just $\N$-graded) manifolds \cite{Kontsevich:1997vb, severa2001some, Voronov:2019mav} and \cite{mehta2006supergroupoids,2011RvMaP..23..669C}. The aim of this paper is to extend these three ways of working with vector bundles into the realm of $\Z$-graded geometry. By ``graded'', we will thus exclusively mean $\Z$-graded. We will also often abbreviate the term ``graded vector bundle'' as GVB. Also let us emphasize that our notion should not be confused with the one in \cite{bruce2016introduction}.

We have defined GVBs using the sheaf approach in \cite{Vysoky:2022gm} and extensively examined this direction. We have constructed a total space graded manifold, thus pointing towards the geometrical interpretation. The main result of this paper is that, just as in the non-graded case and supermanifold case, the sheaf, geometric and algebraic approaches to GVBs are \textit{equivalent} in the sense of equivalence of the respective categories. To skip ahead to these results, see Definitions \ref{def_svbun}, \ref{def_avbun} and \ref{def_GVB2} together with Theorem \ref{thm_SerreSwan} (Graded Serre--Swan) and Theorem \ref{prop_main}. Let us point out that we prove the equivalence of categories of GVBs over \textit{all} (connected) graded manifolds. The discussion thus includes morphisms of GVBs over non-identity graded smooth maps. 

Note that the approach presented here can be easily adapted to supermanifolds and $\Z_{2}^{n}$-graded manifolds. In fact, one can in principle repeat the arguments for any theory where some version of Proposition \ref{tvrz_fiberinjsur} applies. Let us also point out that there is a general procedure for ringed spaces \cite{morye2017serre} which should be in principle adaptable to the $\Z$-graded case. However, in order to keep the text self-contained, we have decided to directly prove the graded Serre--Swan theorem. 

The text is structured as follows: Section 2 provides a quick overview of $\Z$-graded manifolds, giving a succinct summary of the basic concepts required to understand the rest of the text. For a thorough introduction to the field of $\Z$-graded manifolds the reader is invited to the introductory text \cite{Vysoky:2022gm}.

Section 3 deals with graded vector bundles in the sheaf approach. We recall the definitions from \cite{Vysoky:2022gm} and we state some observations, mostly regarding morphisms; for example, that a morphism is fully determined by the map of global sections (Proposition \ref{prop_GVBmorphisms}) or sufficient conditions for when the image or kernel of a morphism is a graded vector subbundle (Proposition \ref{prop_kerimsubbundles}).

Section 4 introduces the algebraic approach and as such is concerned mostly with properties of graded modules. Notable results include the already mentioned graded Serre--Swan Theorem (Theorem \ref{thm_SerreSwan}) and its application in a short proof of the fact that the ``naive" tensor product of GVBs does not require sheafification to be a correct tensor product (Theorem \ref{thm_tensorProd}).

Section 5 is tasked with the geometric approach --- the graded manifold side of GVBs. In \cite{grabowski2009higher} and \cite{grabowskiRotkiewicz_2012} it is shown that a vector bundle is fully described solely in terms of the homogeneous structure on the total space. It is in this spirit that we describe GVBs in terms of certain homothety maps encompassing the homogeneous structure. The main result of the section is simply the equivalence with the sheaf approach (Theorem \ref{prop_main}). Of interest may be the ``geometric" construction of the pullback bundle (Example \ref{ex_pullbackBundle}). We also revisit degree shifts of graded vector bundles and note that local sections of a GVB may be realized as right inversions to the projection from appropriately shifted total space (Theorem \ref{thm_sections_and_sections}).

\section*{Acknowledgments}
The research of RŠ an JV was supported by grant GAČR 24-10031K. JV is also grateful for a financial support from MŠMT under grant no. RVO 14000. 

This work was also supported by the Grant Agency of the Czech Technical University in Prague, grant no. SGS25/163/OHK4/3T/14.

\section{$\Z$-graded Manifolds: A Quick Guide}
This article uses the definition of $\Z$-graded manifolds put forward in \cite{Vysoky:2022gm}. Let us briefly recall the relevant notions - for a more thorough introduction, the reader is referred to the original article.

A real \textbf{graded vector space} is defined as a sequence $V \equiv (V_i)_{i \in \Z}$ of real vector spaces, each called the component vector space. In this text we will not consider vector spaces over any fields other than $\R$. By an element of a graded vector space, $v \in V$, we mean that there exists a unique integer $i \in \Z$ such that $v \in V_i$. This $i$ is called the \textbf{degree} of $v$ and we write $i =: |v|$. Addition of vectors and multiplication by a scalar is defined as in the component vector spaces; in particular, one can only sum vectors \textbf{of the same degree}, there are no inhomogeneous elements of a graded vector space. 
We say that a graded vector space $V$ is \textbf{finite dimensional} if $\sum_{i \in \Z} \dim{V_i} < + \infty$. One can also consider every ordinary vector space $V$ as a graded vector space, where the zeroth component is $V$ itself and all other components are $\{0\}$. 
Considering two graded vector spaces $V,W$, a \textbf{graded linear map} $\varphi : V \to W$ between them is defined as a collection $\varphi \equiv (\varphi_i)_{i \in \Z}$, where for every $i \in \Z$ and for some fixed $k \in \Z$, $\varphi_i : V_i \to W_{i + k}$ is a linear map. The integer $k$ is then called the \textbf{degree} of $\varphi$, denoted as $k =: |\varphi|$. Graded vector spaces, together with graded linear maps of degree zero, form a category $\gvec$.

\begin{example}
Let $(n_j)_{j \in \Z}$ be a sequence of non-negative integers. Then $\R^{(n_j)}$ will denote the graded vector space $(\R^{(n_j)})_i := \R^{n_i}$.
\end{example}
A \textbf{graded algebra} $A$ is a graded vector space with the additional bilinear operation of multiplication of two vectors $(a,b) \mapsto a \cdot b$. One can multiply vectors of any degrees, and the degree of the product needs to satisfy
\begin{equation}
    |a \cdot b| = |a| + |b|.
\end{equation}
An associative graded algebra is one where the multiplication is associative, a unital graded algebra is one which contains a unit, i.e. an element $1 \in A$ such that $1 \cdot a = a = a \cdot 1$ for every $a \in A$. Such a unit clearly needs to have a degree zero, and if it exists, it is unique. A \textbf{graded commutative} algebra is a graded algebra $A$ where the multiplication satisfies the graded commutativity condition
\begin{equation}
    a \cdot b = (-1)^{|a| |b|} \, b \cdot a,
\end{equation}
for any $a,b \in A$. Consider two graded unital algebras $A,B$ and a graded linear map of degree zero $\varphi : A \to B$. We say that $\varphi$ is a \textbf{graded algebra morphism} if  $\varphi(1) = 1$ and $\varphi(a\cdot b) = \varphi(a)\cdot \varphi(b)$, for all $a,b \in A$. In this text we only work with graded commutative, associative, unital algebras; let us denote their category as $\gcas$. Note that by excluding multiplication by scalar from the definition of a graded algebra, one obtains a graded ring.

Consider now a topological space $X$. By a \textbf{presheaf of graded vector spaces} on $X$ we mean a functor $\mathcal{F} : \Op(X)^{op} \to \gvec$, where $\Op(X)$ denotes the category of open subsets of $X$ with inclusions as arrows, and the superscript $op$ denotes the opposite category. In particular, for every open set $U \subseteq X$ there is a graded vector space $\calF(U) \in \gvec$, and for every inclusion of open sets $V \subseteq U$ there is a graded linear map of degree zero $\calF^U_V : \calF(U) \to \calF(V)$ called a \textbf{restriction}. 
As $\calF$ is a functor, there is $\calF^U_U = \id_{\calF(U)}$ and $\calF^V_W \circ \calF^U_V = \calF^U_W$ for all open $W \subseteq V \subseteq U \subseteq X$. For $v \in \calF(U)$ we usually denote $\calF^U_V(v) =: \rest{v}{V}$. We say that $\calF$ is a \textbf{sheaf} if for every open subset $U \subseteq X$, any open cover $\{U_\alpha\}_{\alpha \in I}$ of $U$ and any collection $\{v_\alpha\}_{\alpha \in I}$, $v_\alpha \in \calF(U_\alpha)$, such that $\rest{v_\alpha}{U_{\alpha \beta}} = \rest{v_\beta}{U_{\alpha \beta}}$ for all $\alpha, \beta \in I$, there exists a unique $v \in \calF(U)$, such that $\rest{v}{U_\alpha} = v_\alpha$, for every $\alpha \in I$. Here and throughout $U_{\alpha \beta} := U_\alpha \cap U_\beta$.
Similarly we may consider sheaves valued in categories other than $\gvec$, such as $\gcas$.

To motivate the first example of a graded manifold, consider how any choice of basis of the vector space $\R^n$ gives rise to global coordinates (the dual basis), making $\R^n$ into a smooth manifold. We now endeavor to make $\R^{(n_j)}$ into a graded manifold in a roughly similar manner.
\begin{example}\label{example_graded_domain}
    Consider a finite-dimensional graded vector space $\R^{(n_j)}$, and let $\{\xi_i^{(k)}\}_{i = 1}^{n_k}$ be a basis for $(\R^{(n_j)})_k$ for every $k \in \Z$. In particular, $|\xi_i^{(k)}| = k$. Let us concatenate all non-zero degree bases into $\{\xi_i\}_{i = 1}^{\tilde{m}} := \cup_{k \neq 0} \{\xi^{(k)}_j\}_{j = 1}^{n_k}$. Consider now their duals, i.e. graded linear maps $\xi^i : \R^{(n_j)} \to \R$ of degree $|\xi^i| = -|\xi_i|$, defined by $\xi^i(\xi_k) = \delta\indices{^i_k}$, where $\delta\indices{^i_k} = 1$ for $i = k$, and $\delta\indices{^i_k} = 0$ of degree $|\xi_k| - |\xi_i|$ for $i \neq k$. In particular, for any $j \in \Z\setminus\{0\}$ there are exactly $n_{-j}$ elements $\xi^i$ of degree $j$. These will play the role of graded coordinates on $\R^{(n_j)}$. Let us now denote $m_j := n_{-j}$ for any $j \in \Z$.

    Next, we may construct a sheaf $\cifty_{(m_{j})}$ valued in $\gcas$ on the space $\R^{m_0}$ like so: for any open $U \subseteq \R^{m_0}$ let $\cifty_{(m_j)}(U)$ be a graded algebra whose elements of degree $k$ are formal power series
    \begin{equation}\label{eq1}
        f := \sum_{\mathbf{p} \in \N^{\tilde{m}}_k} f_{\mathbf{p}} (\xi^1)^{p_1}\cdots (\xi^{\tilde{m}})^{p_{\tilde{m}}} \equiv \sum_{\mathbf{p} \in \N^{\tilde{m}}_k} f_{\mathbf{p}}\xi^{\mathbf{p}},
    \end{equation}
    where $\tilde{m} = \sum_{k \neq 0}m_k$, $f_{\mathbf{p}}$ are smooth functions on $U$ and the multiindices $\mathbf{p}$ range over the set
    \begin{equation}
        \N^{\tilde{m}}_{k} := \{\mathbf{p} \in (\N_{0})^{\tilde{m}} \ : \ \sum_{i = 1}^{\tilde{m}}p_i|\xi^i| = k \ \text{and} \ p_{i} \in \{0,1\} \text{ for } |\xi^i| \text{ odd }\},
    \end{equation}
    which ensures that each summand of the series has the same degree $k$. The graded vector space structure of $\cifty_{(m_j)}(U)$ is defined component-wise, and multiplication is introduced, as one might expect, by
    \begin{equation}
         ( \sum_{{\bld{r}} \in \N_{|f|}^{\tilde{m}}} f_{\bld{r}}\, \xi^{\bld{r}} ) \cdot ( \sum_{{\bld{q}} \in \N_{|g|}^{\tilde{m}}} g_{\bld{q}}\, \xi^{\bld{q}} ) =  \sum_{{\bld{p}} \in \N_{|f| + |g|}^{\tilde{m}}} \left(f\cdot g\right)_{\bld{p}}\, \xi^{\bld{p}} ,
    \end{equation}
    where for any $\bld{p} \in \N^{\tilde{m}}_{|f| + |g|}$ there is
    \begin{equation}
        \left(f \cdot g\right)_{\bld{p}} = \sum_{\substack{\bld{r} \in \N^{\tilde{m}}_{|f|} \\ \bld{r}\leq \bld{p}}} \, \epsilon^{\bld{r}, \bld{p} - \bld{r}} \,  f_{\bld{r}} \, g_{\bld{p}-\bld{r}}.
    \end{equation}
    Here we write $\bld{r} \leq \bld{p}$ if and only if $r_\mu \leq p_\mu$ for every $\mu \in \{1, \dotsc, \tilde{m}\}$, and $\epsilon^{\bld{r}, \bld{p} - \bld{r}} \in \{-1, 1\}$ is the sign obtained by rearranging $\xi^{\bld{r}}\xi^{\bld{p} - \bld{r}}$ into $\xi^{\bld{p}}$ according to the graded commutativity rules. For finite series the multiplication reduces to ordinary multiplication of polynomials in $\xi$ with smooth function coefficients, subject to the graded commutativity. Restrictions on the presheaf $\cifty_{(m_j)}$ are given simply by
    \begin{equation}
        (\sum_{\mathbf{p} \in \N^{\tilde{m}}_{|f|}}f_{\mathbf{p}}\xi^{\mathbf{p}}\rest{)}{V} = \sum_{\mathbf{p} \in \N^{\tilde{m}}_{|f|}}\rest{(f_{\mathbf{p}})}{V}\xi^{\mathbf{p}}.
    \end{equation}
    It can be shown that $\cifty_{(m_j)}$ is indeed a sheaf valued in $\gcas$. The pair $(\R^{m_0}, \cifty_{(m_j)})$ is a graded manifold of graded dimension $(m_j) \equiv (n_{-j})$, corresponding to the graded vector space $\R^{(n_j)}$. We sometimes denote this graded manifold as $\gR^{(n_{j})}$. 
For any $U \in \Op(\R^{m_0})$ we may consider the pair $(U, \cifty_{(m_j)}|_{U}) =: U_{(m_j)}$, which is also a graded manifold of graded dimension $(m_j)_{j \in \Z}$ called a \textbf{graded domain}. This is the local model for all graded manifolds. 
\end{example}
\begin{definition}[Graded Manifold]
A \textbf{graded manifold} is a pair $\calM := (M, \cifty_\calM)$, where $M$ is a second-countable Hausdorff topological space and $\cifty_\calM$ is a sheaf on $M$ valued in $\gcas$ for which there exists a sequence of non-negative integers $(m_j)_{j \in \Z}$, $\sum_{j \in \Z} m_j =: m < + \infty$ and an open cover $\{U_\alpha\}_{\alpha \in I}$ of $M$, such that for every $\alpha \in I$,
\begin{enumerate}
\item There exists a homeomorphism $\uline{\phi_{\alpha}} : U_\alpha \to \hat{U}_{\alpha}$ for some open subset $\hat{U}_{\alpha} \subseteq \R^{m_0}$.
\item There exists a natural isomorphism $\phi_\alpha^\ast : \rest{\cifty_{(m_j)}}{\hat{U}_\alpha} \to (\uline{\phi_\alpha})_\ast \left( \rest{\cifty_\calM}{U_\alpha} \right)$. The codomain here is the pushforward sheaf $\left[(\uline{\phi_\alpha})_\ast \left( \rest{\cifty_\calM}{U_\alpha} \right)\right](\hat{V}) := \cifty_\calM (\uline{\phi_\alpha}^{-1}(\hat{V}))$,  for all $\hat{V} \in \Op(\hat{U}_\alpha)$.
\end{enumerate}
We denote $\phi_\alpha := (\uline{\phi_\alpha}, \phi_\alpha^\ast)$ and say that $\{(U_\alpha, \phi_\alpha)\}_{\alpha \in I}$ is an \textbf{atlas} for a graded manifold $\calM$. Each pair $(U_\alpha, \phi_\alpha)$ is a local \textbf{chart} for $\calM$ and we call $(m_j)$ the graded dimension, or just the \textbf{dimension}, of $\calM$. It can be shown that $\{(U_\alpha, \uline{\phi_\alpha})\}_{\alpha \in I}$ is automatically a smooth atlas for $M$, making $M$ an $m_0$-dimensional smooth manifold called the \textbf{underlying smooth manifold} of $\calM$. The sheaf $\cifty_\calM$ is called the \textbf{structure sheaf} or the \textbf{sheaf of graded functions}. Locally, we have the isomorphism $\phi_\alpha^\ast : \cifty_\calM(U_\alpha) \cong \cifty_{(m_j)}(\hat{U}_\alpha)$, where all elements have the form (\ref{eq1}). In particular, we often abuse notation and write $\phi_\alpha^\ast(\xi^\mu) =: \xi^\mu$, $\phi_\alpha^\ast(x^i) =: x^i$ and call these the \textbf{coordinates} for $\calM$ on $U_\alpha$ introduced by the map $\phi_\alpha$. Sometimes it is advantageous to denote all the coordinates, both degree zero coordinates $x^i$ and the non-zero degree coordinates $\xi^\mu$, together as $\{x^J\}_{J = 1}^m$. 

A morphism $\varphi : \calM \to \calN$ between two graded manifolds $\calM = (M, \cifty_\calM)$ and $\calN = (N, \cifty_\calN)$ is called a \textbf{graded smooth map} and is defined as a pair $\varphi = (\uline{\varphi}, \varphi^\ast)$ where $\uline{\varphi} : M \to N$ is a smooth map and $\varphi^\ast : \cifty_\calN \to \uline{\varphi}_\ast \cifty_\calM$ is morphism of sheaves valued in $\gcas$. In addition, we require that for every $m \in M$ the induced map between stalks $[f]_{\uline{\varphi}(m)} \mapsto [\varphi^\ast(f)]_m$ is a local graded ring morphism, i.e. that it preserves the maximal ideal. For this to make sense, it should be noted that any graded manifold is a graded locally ringed space. This last condition is always satisfied when $\phi^\ast$ is an isomorphism; in particular, local charts of a graded manifold are graded smooth maps. The category of graded manifolds is denoted as $\gman$. The isomorphisms in $\gman$ are also called graded diffeomorphisms.
\end{definition}

\begin{remark} \label{rem_connected}
    Recall that a graded manifold $\calM$ is called \textbf{connected}, iff its underlying manifold $M$ is connected. Here GVBs are defined to have a constant graded rank. To establish the equivalence of categories described in this paper, we either have to allow for a different graded rank on each connected component of $M$, or restrict ourselves to connected manifolds. Since the former case can be easily deduced from the latter, we follow the easier path. We will thus henceforth assume that \textit{all graded manifolds involved in this paper are connected}.
\end{remark}

\section{Sheaf Approach}
\subsection{Definitions}\label{section_sheaf}

The first definition of graded vector bundles approaches the problem from the ``sheaves of sections" direction \cite{Vysoky:2022gm}. If $\calM$ is a graded manifold, then by a \textbf{sheaf of $\cifty_\calM$-modules} we mean any sheaf $\scrS$ on $M$ valued in $\gvec$, such that
\begin{enumerate}
\item $\scrS(U)$ is a $\cifty_\calM(U)$-module, i.e. there is a graded bilinear map of degree zero $(f,s) \mapsto f\cdot s$ such that $(fg)\cdot s = f\cdot (g \cdot s)$ and $1 \cdot s = s$, for all $f,g \in \cifty_\calM (U)$ and $s \in \scrS(U)$.
\item Restriction on the sheaf $\scrS$ are compatible with those on $\cifty_\calM$, i.e. $\rest{(f \cdot s)}{V} = \rest{f}{V} \cdot \rest{s}{V}$, for all $f \in \cifty_\calM (U)$, $s \in \scrS(U)$ and $V \in \Op(U)$.
\end{enumerate}
Note that we can additionally set $ s \cdot f := (-1)^{|f||s|} f \cdot s$ so that we need not distinguish between left and right modules. Let $\scrP$ be another sheaf of $\cifty_\calM$-modules and $U \in \Op(M)$. By a graded $\cifty_\calM(U)$-linear map $\psi : \scrS(U) \to \scrP(U)$ we mean a graded linear map satisfying $\psi(f \cdot s) = (-1)^{|\psi||f|} f \cdot \psi(s)$ for any $s \in \scrS(U)$ and $f \in \cifty_\calM(U)$. A sheaf morphism $F : \scrS \to \scrP$ is called a morphism of sheaves of $\cifty_\calM$-modules if $F_U$ is $\cifty_\calM(U)$-linear map of degree zero for every $U \in \Op(M)$. 

A collection of ``global" elements $s_1, \dotsc, s_r \in \scrS(M)$ is called a finite \textbf{frame}\footnote{See the more general Definition \ref{def_frame}} for $\scrS$ if for any $s \in \scrS(M)$ there exist unique $f^1, \dotsc, f^r \in \cifty_\calM(M)$, such that
\begin{equation}
    s = \sum_{i = 1}^r f^i\cdot s_i.
\end{equation}
The existence of partition of unity for graded manifolds ensures that if $s_1, \dotsc, s_r$ is a frame for $\scrS$, then $\rest{s_1}{U}, \dotsc, \rest{s_r}{U}$ is a frame for $\scrS|_{U}$ for any open $U \subseteq M$. A sheaf of $\cifty_\calM$-modules $\scrS$ for which there exists a finite frame is called \textbf{freely and finitely generated}. Note that if $s_1, \dotsc, s_r$ is a frame for $\scrS$, then the sequence $(r_j)_{j \in \Z}$ defined as $r_j := \# \{i \in \Z \ : \ |s_i| = j\}$ is the same for any frame, see Proposition $\ref{prop_invariantRankProperty}$. The sequence $(r_j)$ is called the \textbf{graded rank} of $\scrS$.

If there exist an open cover $\{U_\alpha\}_{\alpha \in I}$ of $M$ such that $\rest{\scrS}{U_\alpha}$ is a freely and finitely generated sheaf of $\rest{\cifty_\calM}{U_\alpha}$-modules for every $\alpha \in I$, then $\scrS$ is called \textbf{locally freely and finitely generated}. Each frame for $\rest{\scrS}{U_\alpha}$ is then called a \textbf{local frame} for $\scrS$ on $U_\alpha$. If all $\rest{\scrS}{U_\alpha}$ have the same graded rank $(r_j)$, we say that $\scrS$ is of constant graded rank and this is automatically satisfied for connected $M$.

Now we must briefly discuss dual sheaves. Turns out that duals of sheaves $\cifty_\calM$-modules are quite simple: if $\scrS$ is a sheaf of $\cifty_\calM$-modules, then $\scrS^\ast(U)$ comprises graded $\cifty_\calM(U)$-linear maps from $\scrS(U)$ to $\cifty_\calM(U)$ for every $U \in \Op(M)$. The dual $\scrS^\ast$ is automatically a sheaf of $\cifty_\calM$-modules: to obtain restrictions on $\scrS^\ast$, one must make use of partition of unity for graded manifolds and the $\cifty_\calM$-module structure on $\scrS^\ast$ is obtained, as expected, by $(f \cdot \psi)(s) := f \cdot \psi(s)$. It is important to note, that if $\scrS$ is locally freely and finitely generated, then so is $\scrS^\ast$. In fact, if $s_1, \dotsc, s_r$ is a frame for $\scrS$ over $U$, then one can define the dual frame $s^1, \dotsc, s^r$ for $\scrS^\ast$ over $U$ simply by $s^i(s_j) := \delta\indices{^i_j}$ and extending through $\cifty_\calM(U)$-linearity. We see that $|s^i| = -|s_i|$ and so if $(r_j)_{j \in \Z}$ is the graded rank of $\scrS$, then $(r_{-j})_{j \in \Z}$ is the graded rank of $\scrS^\ast$. We may now give the definition of the category of GVBs defined as sheaves of modules.

\begin{definition}[Category $\svbun$] \label{def_svbun}
\begin{itemize}
    \item Let $\calM$ be a graded manifold. A locally freely and finitely generated sheaf of $\cifty_\calM$-modules of constant graded rank is called a \textbf{graded vector bundle} over $\calM$.
    \item Let $\scrS, \ \scrP$ be two GVBs over graded manifolds $\calM$ and $\calN$, respectively. A \textbf{morphism} from $\scrS$ to $\scrP$ is defined as a pair $(\varphi, \Lambda)$ where $\varphi : \calM \to \calN$ is a graded smooth map and $\Lambda : \scrP^\ast \to \varphi_\ast(\scrS^\ast)$ is a morphism of sheaves of $\cifty_\calN$-modules. The $\cifty_\calN(U)$-module structure on $[\varphi_\ast (\scrS^\ast)](U)\equiv \scrS^\ast(\uline{\varphi}^{-1}(U))$ is introduced by $h \cdot \alpha := \varphi^\ast(h)\, \alpha$, for any $\alpha \in \scrS^\ast(\uline{\varphi}^{-1}(U))$ and $h \in \cifty_\calN(U)$. The resulting category is denoted as as $\svbun$.
    \item If $\scrS$ is freely and finitely generated, we call it a \textbf{trivial GVB}.
\end{itemize}
\end{definition}

\begin{example}[Graded Tangent Bundle]\label{example_VFs}
    Vector fields on graded manifold $\calM$ can be defined as derivations of graded functions: a graded linear map $X : \cifty_\calM(U) \to \cifty_\calM(U)$ is a vector field on $\calM$ over $U \in \Op(M)$, if it satisfies the graded Leibniz rule 
\begin{equation}
    X(fg) = X(f) \, g + (-1)^{|X||f|}f \, X(g)
\end{equation}
for any $f,g \in \cifty_\calM(U)$. We denote the sheaf of vector fields on $\calM$ by $\scrX_\calM$. Let $(m_j)_{j \in \Z}$ be the graded dimension of $\calM$ and consider coordinates $\{\xi^\mu\}_{\mu = 1}^{\tilde{m}}$, $\{x^i\}_{i = 1}^{m_0}$  for $\calM$ on some $U \subseteq M$. One can define the \textbf{coordinate vector fields} $\dd{}{\xi^{\mu}}, \ \dd{}{x^i} \in \scrX_\calM(U)$ of degrees $|\dd{}{\xi^\mu}| = -|\xi^\mu|$ and $ |\dd{}{x^i}| = 0$, by requiring 
    \begin{equation}
        \dd{}{\xi^{\mu}} \xi^\nu = \delta\indices{_\mu^\nu}, \quad \dd{}{\xi^{\mu}} x^i = 0, \quad \dd{}{x^{i}}x^j = \delta\indices{_i^j}, \quad \dd{}{x^i}\xi^\mu = 0,
    \end{equation}
    and imposing the graded Leibniz rule. It can be shown that coordinate vector fields form a local frame for $\scrX_\calM$, making $\scrX_\calM$ into a graded vector bundle over $\calM$.
\end{example}

Let $\scrS \in \svbun$ be a GVB over $\calM$. For each $m \in M$, there are two associated graded vector spaces. First, one has the stalk $\scrS_{m}$ of the sheaf $\scrS$. It has a natural structure of a $\cifty_{\calM,m}$-module, where $\cifty_{\calM,m}$ is the stalk of the sheaf $\cifty_{\calM}$ at $m$. One writes $\Op_{m}(M)$ for open subsets of $M$ containing the point $m$. For each $U \in \Op_{m}(M)$ and $s \in \scrS(U)$, we write $[s]_{m} \in \scrS_{m}$ for its germ. Note that for sheaves of $\cifty_{\calM}$-modules, the ``germ map'' $\scrS(U) \rightarrow \scrS_{m}$ is surjective for any $U \in \Op_{m}(M)$, that is we can write any element of $\scrS_{m}$ as a germ of some section of $\scrS$ over $U$ for any given open neighborhood of $m$. This uses a partition of unity argument, see Corollary 3.44 in \cite{Vysoky:2022gm}.

Next, there is a \textbf{fiber} $\scrS_{(m)}$ of $\scrS$ at $m$. It can be constructed in several equivalent ways, see e.g Example 5.17 in \cite{Vysoky:2022gm}. For our purposes in this paper, we define it as 
\begin{equation} \label{eq_fiberdefinition}
    \scrS_{(m)} = \scrS_{m} / (\frJ_{m} \cdot \scrS_{m}),
\end{equation}
where the ideal $\frJ_{m} = \{ [f]_{m} \in \cifty_{\calM,m} \mid f(m) = 0 \}$ is the Jacobson radical of the stalk $\cifty_{\calM,m}$. By $f(m)$ for a graded function $f \in \cifty_\calM(U)$ we mean $f(m) := \uline{f}(m)$ where $\uline{f} \in \cifty_M(U)$ is the smooth function called the body of $f$, see \cite[Example 2.19.]{Vysoky:2022gm}. In particular, $f(m) = 0$ whenever $|f| \neq 0$.

For each $U \in \Op_{m}(M)$, we can compose the canonical map $\scrS(U) \rightarrow \scrS_{m}$ with the quotient map $\scrS_{m} \rightarrow \scrS_{(m)}$ to obtain a surjective graded linear map which assigns to each $s \in \scrS(U)$ its \textbf{value $s|_{m} \in \scrS_{(m)}$ at $m$}. It follows that $\scrS_{(m)}$ is a finite-dimensional graded vector space: if $s_{1}, \dots, s_{r} \in \scrS(U)$ is a local frame for $\scrS$ over $U \in \Op_{m}(M)$, then $s_{1}|_{m}, \dots, s_{r}|_{m}$ forms a basis for $\scrS_{(m)}$. In particular, its (graded) dimension is equal to the graded rank of $\scrS$.

It will later prove to be useful that the fiber can be equivalently described as a certain quotient of the $\cifty_{\calM}(M)$-module of global sections. 
\begin{lemma} \label{lem_fiber}
    There is a canonical isomorphism of $\cifty_{\calM}(M)$-modules
    \begin{equation}
        \scrS_{(m)} \cong \scrS(M) / (\calJ_{m} \cdot \scrS(M)),
    \end{equation}
    where $\calJ_{m} = \{ f \in \cifty_{\calM}(M) \; | \; f(m) = 0 \}$ is the ideal of functions vanishing at $m$.
\end{lemma}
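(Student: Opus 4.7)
The plan is to exhibit the isomorphism as the map induced by the evaluation $s \mapsto s|_m$. Concretely, let $\phi : \scrS(M) \to \scrS_{(m)}$ be the composition of the germ map $\scrS(M) \to \scrS_m$ with the quotient $\scrS_m \to \scrS_{(m)} = \scrS_m / (\frJ_m \cdot \scrS_m)$. This is clearly $\cifty_\calM(M)$-linear of degree zero, so it suffices to verify that $\phi$ is surjective and that $\ker \phi = \calJ_m \cdot \scrS(M)$; the first isomorphism theorem then delivers the result.

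Surjectivity is immediate: the germ map $\scrS(M) \to \scrS_m$ is surjective because $\scrS$ is a sheaf of $\cifty_\calM$-modules over a graded manifold, so we may use the partition of unity argument already referenced in the paper (Corollary 3.44 of \cite{Vysoky:2022gm}), and the quotient $\scrS_m \to \scrS_{(m)}$ is surjective by construction. The inclusion $\calJ_m \cdot \scrS(M) \subseteq \ker \phi$ is also routine: if $f \in \calJ_m$ then $[f]_m \in \frJ_m$ (whether $f$ has zero degree and vanishes at $m$, or has nonzero degree and therefore $f(m) = 0$ automatically), so $[f \cdot s]_m = [f]_m \cdot [s]_m \in \frJ_m \cdot \scrS_m$ and $\phi(f \cdot s) = 0$.

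The hard step, and the main technical obstacle, is the reverse inclusion $\ker \phi \subseteq \calJ_m \cdot \scrS(M)$, which requires globalizing local data. Suppose $\phi(s) = 0$, so $[s]_m = \sum_i [g_i]_m \cdot [t_i]_m$ for some $g_i \in \cifty_\calM(U)$ with $g_i(m) = 0$ and $t_i \in \scrS(U)$ on a small neighborhood $U \in \Op_m(M)$. Shrinking if necessary, the equality $s|_V = \sum_i g_i|_V \cdot t_i|_V$ holds on some $V \subseteq U$. Choose ordinary smooth bump functions $\chi, \chi' \in \cifty_M(M) \hookrightarrow \cifty_\calM(M)$ with $\chi \equiv 1$ on a neighborhood $W$ of $m$, $\supp \chi \subseteq V$, and $\chi' \equiv 1$ on $\supp \chi$ with $\supp \chi' \subseteq V$. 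Then the products $\hat{g}_i := \chi g_i$ and $\hat{t}_i := \chi' t_i$, extended by zero outside $V$, define global elements of $\cifty_\calM(M)$ and $\scrS(M)$; moreover $\hat{g}_i(m) = \chi(m) g_i(m) = 0$, so $\hat{g}_i \in \calJ_m$. A direct check shows $\chi \cdot s = \sum_i \hat{g}_i \cdot \hat{t}_i$ globally (both sides vanish outside $\supp \chi$ and agree on $V$). Finally, writing $s = \chi \cdot s + (1-\chi) \cdot s$ and noting that $(1-\chi) \in \calJ_m$ since $(1-\chi)(m) = 0$, we obtain $s \in \calJ_m \cdot \scrS(M)$ as required.

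The only subtle points I expect to encounter are verifying that extension-by-zero genuinely produces graded sections (which follows because the products are supported in a set on which they are defined, combined with the sheaf property), and keeping track of the degree-zero nature of the bump functions so that multiplication respects the graded structure. Naturality of the isomorphism in $\scrS$ is then automatic from the naturality of germ and quotient maps.
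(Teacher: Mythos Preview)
Your proof is correct and follows essentially the same approach as the paper's. The paper defines the map in the reverse direction, $\Psi: \scrS_{(m)} \to \scrS(M)/(\calJ_m \cdot \scrS(M))$, and checks well-definedness; but the substantive step in both arguments is identical, namely that $s|_m = 0$ forces $s \in \calJ_m \cdot \scrS(M)$ via a bump function and the decomposition $s = \eta \cdot s + (1-\eta) \cdot s$. The only cosmetic difference is that the paper invokes the surjectivity of the germ maps up front to take the $g_i$ and $t_i$ global from the outset, so it needs just one bump function rather than your two.
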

\begin{proof}
    The action of $\cifty_{\calM}(M)$ on $\scrS_{(m)}$ is defined to satisfy
    \begin{equation}
        f \cdot s|_{m} = f(m) s|_{m},
    \end{equation}
    for every $s \in \scrS(M)$ and $f \in \cifty_{\calM}(M)$. Let $\natural_{m}: \scrS(M) \rightarrow \scrS(M) / (\calJ_{m} \cdot \scrS(M))$ be the canonical quotient map. The sought isomorphism $\Psi$ is defined by requiring
    \begin{equation}
        \Psi(s|_{m}) = \natural_{m}(s),
    \end{equation}
    for all $s \in \scrS(M)$. One has to check that it is well-defined. Suppose $s|_{m} = 0$, that is $[s]_{m} \in \frJ_{m} \cdot \scrS_{m}$. Since elements of $\frJ_{m}$ can be written as $[f]_{m}$ for $f \in \calJ_{m}$, it follows that $[s]_{m} = [f^{k} \cdot s_{k}]_{m}$, for some collection of $f^{k} \in \calJ_{m}$ and $s_{k} \in \scrS(M)$. There is thus some $U \in \Op_{m}(M)$, such that $s|_{U} = f^{k}|_{U} \cdot s_{k}|_{U}$. One can choose a bump function $\eta \in \cifty_{\calM}(M)$ satisfying $\supp(\eta) \subseteq  U$ and $\eta(m) = 1$ and write
    \begin{equation}
        s = \eta \cdot s + (1 - \eta) \cdot s = \eta \cdot (f^{k}|_{U} \cdot s_{k}|_{U}) + (1 - \eta) \cdot s = (\eta \cdot f^{k}) \cdot s_{k} + (1 - \eta) \cdot s,
    \end{equation}
    where we have used the usual manipulations with bump functions in the process. Since both $\eta \cdot f^{k} \in \calJ_{m}$ and $1 - \eta \in \calJ_{m}$, we have just proved that $s \in \calJ_{m} \cdot \scrS(M)$ and thus $\Psi(s|_{m}) = \natural_{m}(s) = 0$. 

    It is easy to see that $\Psi$ is $\cifty_{\calM}(M)$-linear and surjective. Finally, if $\natural_{m}(s) = 0$, one has $s \in \calJ_{m} \cdot \scrS(M)$ and thus $[s]_{m} \in \frJ_{m} \cdot \scrS_{m}$. Hence $s|_{m} = 0$ and we see that $\Psi$ is injective.
\end{proof}
\subsection{On Morphisms}
In this subsection, let us discuss some properties of morphisms of graded vector bundles. First, let us examine general morphisms introduced in Definition \ref{def_svbun}. 
\begin{proposition} \label{prop_GVBmorphisms}
    Let $\scrS, \scrP \in \svbun$ be two GVBs over graded manifolds $\calM$ and $\calN$, respectively. Then the following data are equivalent:
    \begin{enumerate}
        \item A morphism $(\varphi,\Lambda): \scrS \rightarrow \scrP$;
        \item A pair $(\varphi, \Lambda_{0})$, where $\varphi: \calM \rightarrow \calN$ is a graded smooth map and $\Lambda_{0}: \scrP^{\ast}(N) \rightarrow \scrS^{\ast}(M)$ is a degree zero graded linear map satisfying
        \begin{equation}
            \Lambda_{0}(g \cdot \psi) = \varphi^{\ast}(g) \cdot \Lambda_{0}(\psi). 
        \end{equation}
    \end{enumerate}
We will usually write just $\Lambda_{0} = \Lambda$ when there is no risk of confusion.
\end{proposition}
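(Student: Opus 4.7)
The direction $1 \Rightarrow 2$ is immediate: take $\Lambda_0 := \Lambda_N$, which by hypothesis is $\cifty_\calN(N)$-linear of degree zero with action twisted by $\varphi^\ast$. The work goes into $2 \Rightarrow 1$: reconstructing the entire sheaf morphism from just its global component. The technical heart of the argument is a \emph{locality principle} for $\Lambda_0$, which I would establish first: if $\psi \in \scrP^\ast(N)$ satisfies $\psi|_W = 0$ for some open $W \subseteq N$, then $\Lambda_0(\psi)|_{\uline{\varphi}^{-1}(W)} = 0$. Given $m \in \uline{\varphi}^{-1}(W)$, pick a bump function $\zeta \in \cifty_\calN(N)$ with $\supp(\zeta) \subseteq W$ and $\zeta \equiv 1$ on some open neighborhood $V'' \subseteq W$ of $\uline{\varphi}(m)$. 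Then $\zeta \cdot \psi = 0$, so $\varphi^\ast(\zeta) \cdot \Lambda_0(\psi) = \Lambda_0(\zeta \cdot \psi) = 0$. Since $\varphi^\ast$ is a unital algebra morphism compatible with restrictions, $\varphi^\ast(\zeta)|_{\uline{\varphi}^{-1}(V'')} = 1$, whence $\Lambda_0(\psi)$ vanishes on the open neighborhood $\uline{\varphi}^{-1}(V'')$ of $m$.

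With the locality principle available, define $\Lambda_U: \scrP^\ast(U) \to \scrS^\ast(\uline{\varphi}^{-1}(U))$ for each $U \in \Op(N)$ as follows. Given $\psi \in \scrP^\ast(U)$, for every $n \in U$ pick an open $V_n \subseteq U$ containing $n$ and a bump function $\eta_n \in \cifty_\calN(N)$ satisfying $\supp(\eta_n) \subseteq U$ and $\eta_n|_{V_n} \equiv 1$. The section $\eta_n|_U \cdot \psi \in \scrP^\ast(U)$ vanishes on $U \ssm \supp(\eta_n)$ and thus extends by zero to a unique global section $\widetilde{\eta_n \psi} \in \scrP^\ast(N)$ via the sheaf axiom. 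Set $\sigma_n := \Lambda_0(\widetilde{\eta_n \psi})|_{\uline{\varphi}^{-1}(V_n)}$. On the overlap $V_n \cap V_{n'}$ both $\widetilde{\eta_n \psi}$ and $\widetilde{\eta_{n'} \psi}$ restrict to $\psi$, so their difference vanishes there; the locality principle forces $\sigma_n = \sigma_{n'}$ on $\uline{\varphi}^{-1}(V_n \cap V_{n'})$. The sheaf axiom for $\scrS^\ast$ then glues the $\sigma_n$ into a unique $\Lambda_U(\psi) \in \scrS^\ast(\uline{\varphi}^{-1}(U))$.

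The remaining verifications are routine. Degree-zero $\cifty_\calN(U)$-linearity of each $\Lambda_U$ transfers from the corresponding property of $\Lambda_0$ after multiplication by the bump functions used in the construction; compatibility with restrictions is automatic from the gluing (if $V \subseteq U$, one can reuse the same local data $(V_n, \eta_n)$ with $V_n \subseteq V$ in both constructions); and $\Lambda_N = \Lambda_0$ by choosing the constant bump function $\eta \equiv 1$. Conversely, starting from a sheaf morphism $\Lambda$ and outputting $\Lambda_0 := \Lambda_N$, the uniqueness of the construction --- combined with the identity $\Lambda_U(\psi)|_{\uline{\varphi}^{-1}(V_n)} = \Lambda_{V_n}(\psi|_{V_n}) = \Lambda_N(\widetilde{\eta_n \psi})|_{\uline{\varphi}^{-1}(V_n)}$ forced by the sheaf-morphism axioms on $\Lambda$ --- recovers the original $\Lambda$. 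I expect the locality principle to be the only genuine obstacle, since it packages the key interaction between $\varphi^\ast$, bump functions and the module structure; everything afterwards proceeds by partition-of-unity gluing in parallel with the classical non-graded case.
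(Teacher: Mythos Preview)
Your proposal is correct and follows essentially the same approach as the paper: both argue \textit{1} $\Rightarrow$ \textit{2} by taking $\Lambda_0 = \Lambda_N$, and for \textit{2} $\Rightarrow$ \textit{1} both establish a locality principle for $\Lambda_0$ via bump functions and then use it to define the components $\Lambda_V$ uniquely. The paper's proof is much terser---it simply states that $\Lambda_0$ is local and cites \cite{vysoky2024graded} for the detailed construction of the restriction $\Lambda_0|_V$---whereas you have spelled out explicitly the extension-by-zero and gluing argument; but the underlying mechanism is identical.
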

\begin{proof}
The relation of $\Lambda$ to $\Lambda_{0}$ is simple, $\Lambda_{0}$ is just the component $\Lambda_{N}$ of the natural transformation $\Lambda$, that is the corresponding $\cifty_{\calN}(N)$-linear morphism, where the graded $\cifty_{\calN}(N)$-module structure on $\scrS(M)$ is discussed in Definition \ref{def_svbun}. Therefore, the implication \textit{1.} $\Rightarrow$ \textit{2.} is obvious. 

The proof of \textit{2.} $\Rightarrow$ \textit{1.} is just a simple modification of the proof of Proposition 1.7 in \cite{vysoky2024graded}. Indeed, every $\cifty_{\calN}(N)$-linear map $\Lambda_{0}$ is local, that is if $\psi|_{V} = 0$ for some $V \in \Op(N)$, then $\Lambda_{0}(\psi)|_{V} = 0$. This property allows one to restrict it to a unique $\cifty_{\calN}(V)$-module morphism $\Lambda_{0}|_{V}: \scrP^{\ast}(V) \rightarrow (\varphi_{\ast}(\scrS^{\ast}))(V) \equiv \scrS^{\ast}(\ul{\varphi}^{-1}(V))$ satisfying $\Lambda_{0}|_{V}(\psi|_{V}) = \Lambda_{0}(\psi)|_{V}$ for all $\psi \in \scrP^{\ast}(N)$. It is easy to see that setting $\Lambda_{V} := \Lambda_{0}|_{V}$ for each $V \in \Op(N)$ defines a unique morphism of sheaves of $\cifty_{\calN}$-modules $\Lambda: \scrP^{\ast} \rightarrow \varphi_{\ast}(\scrS^{\ast})$ satisfying $\Lambda_{N} = \Lambda_{0}$. 
\end{proof}

This proposition shows that when dealing with GVB morphisms, it is sufficient to work with maps of modules of global sections of the respective dual sheaves. If the underlying graded manifolds coincide and $\varphi$ is just the identity, the situation is even simpler.
\begin{proposition} \label{prop_GVBsoveridentity}
    Let $\scrS, \scrP \in \svbun$ over the same graded manifold $\calM$. Then the following data are equivalent:
    \begin{enumerate}
        \item A morphism $(\1_{\calM}, \Lambda): \scrS \rightarrow \scrP$;
        \item A morphism $F: \scrS \rightarrow \scrP$ of sheaves of $\cifty_{\calM}$-modules;
        \item A morphism $F_{0}: \scrS(M) \rightarrow \scrP(M)$ of $\cifty_{\calM}(M)$-modules.
    \end{enumerate}
We usually identify all three of those and write just $F: \scrS \rightarrow \scrP$. 
\end{proposition}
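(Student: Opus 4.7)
The plan is to handle the equivalence $2 \Leftrightarrow 3$ first, and then build $1 \Leftrightarrow 2$ on top of it via duality. The argument for $2 \Leftrightarrow 3$ is essentially a direct transcription of the localization step in the proof of Proposition \ref{prop_GVBmorphisms}, with $\Lambda_{0}$ now playing the role of $F_{0}: \scrS(M) \to \scrP(M)$. Given $F$, one simply sets $F_{0} := F_{M}$. Conversely, given $F_{0}$, the existence of bump functions on graded manifolds implies locality of $F_{0}$: if $s \in \scrS(M)$ satisfies $s|_{V} = 0$, one picks a bump function $\eta$ supported in $V$ with $\eta$ equal to $1$ near any prescribed compactum inside $V$, writes $s = (1-\eta)\cdot s$, and obtains $F_{0}(s)|_{V} = 0$ from $\cifty_{\calM}(M)$-linearity. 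This locality allows one to define $F_{V}: \scrS(V) \to \scrP(V)$ unambiguously by extending local sections off $V$ with bump functions, and the resulting family glues into a unique sheaf morphism $F$ with $F_{M} = F_{0}$.

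For $1 \Leftrightarrow 2$, the key tool I would use is the \textbf{reflexivity} of a GVB: the canonical evaluation map $\iota_{\scrS}: \scrS \to \scrS^{\ast\ast}$ is an isomorphism of sheaves of $\cifty_{\calM}$-modules, and likewise for $\scrP$. This is checked locally on a cover $\{U_{\alpha}\}$ trivializing both $\scrS$ and $\scrP$. Over such a $U_{\alpha}$, if $s_{1}, \dots, s_{r}$ is a frame for $\scrS|_{U_{\alpha}}$, then $\scrS^{\ast}|_{U_{\alpha}}$ is freely generated by the dual frame $s^{1}, \dots, s^{r}$, and applying the same construction once more shows that $\iota_{\scrS}(s_{1}), \dots, \iota_{\scrS}(s_{r})$ is a frame for $\scrS^{\ast\ast}|_{U_{\alpha}}$, so $\iota_{\scrS}|_{U_{\alpha}}$ is an isomorphism. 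Since the $\iota_{\scrS}|_{U_{\alpha}}$ are the restrictions of a globally defined sheaf morphism, the global statement follows.

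With reflexivity in hand, the correspondence $1 \leftrightarrow 2$ is realized by dualization. A morphism $F: \scrS \to \scrP$ of sheaves of $\cifty_{\calM}$-modules dualizes to $\Lambda := F^{\ast}: \scrP^{\ast} \to \scrS^{\ast}$, which is precisely the datum of a morphism in $\svbun$ over $\1_{\calM}$. Conversely, given $\Lambda: \scrP^{\ast} \to \scrS^{\ast}$, one dualizes again to $\Lambda^{\ast}: \scrS^{\ast\ast} \to \scrP^{\ast\ast}$ and composes with the reflexivity isomorphisms to obtain $F := \iota_{\scrP}^{-1} \circ \Lambda^{\ast} \circ \iota_{\scrS}: \scrS \to \scrP$. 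Equivalently, $F$ is characterized by the pairing identity
\begin{equation}
    \Lambda(\psi)(s) = (-1)^{|\psi||s|}\, \psi(F(s))
\end{equation}
for all open $U \subseteq M$, $s \in \scrS(U)$ and $\psi \in \scrP^{\ast}(U)$. One then verifies that these assignments are mutually inverse and preserve degree.

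The main obstacle is the reflexivity claim: while the local frame argument is routine, one must carefully track the Koszul signs arising from the graded evaluation pairing, both in identifying $\scrS$ with $\scrS^{\ast\ast}$ and in checking that the dualized $\Lambda^{\ast}$ is genuinely a morphism of sheaves of $\cifty_{\calM}$-modules of degree zero. Once the signs are under control, the three data sets are tautologically equivalent.
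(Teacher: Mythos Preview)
Your proposal is correct and follows essentially the same approach as the paper: the equivalence $2 \Leftrightarrow 3$ is the localization argument from Proposition \ref{prop_GVBmorphisms}, and $1 \Leftrightarrow 2$ is dualization/transposition combined with reflexivity of GVBs. The paper states this in two sentences, while you spell out the reflexivity argument and the sign bookkeeping in more detail, but the underlying strategy is identical.
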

\begin{proof}
The correspondence of $F$ and $\Lambda$ is obvious, they are just transpose to each other, $\Lambda = F^{T}$. Note that it is important that GVBs are reflexive, that is $(\scrS^{\ast})^{\ast} \cong \scrS$. The proof of the equivalence \textit{2.} $\Leftrightarrow$ \textit{3.} is a line-to-line analogue of the one in the previous proposition. 
\end{proof}

In the rest of this section, we will consider only GVB morphisms over the identity between two GVBs $\scrS$ and $\scrP$ over the same graded manifold $\calM$. Let $F: \scrS \rightarrow \scrP$ be a GVB morphism. For each $m \in M$ there is an induced map $F_{m}: \scrS_{m} \rightarrow \scrP_{m}$ of stalks. Since it maps a submodule $\frJ_{m} \cdot \scrS_{m}$ into $\frJ_{m} \cdot \scrP_{m}$, it induces a graded linear map $F_{(m)}: \scrS_{(m)} \rightarrow \scrP_{(m)}$ of the respective fibers. For any $U \in \Op_{m}(M)$ and $s \in \scrS(U)$, one has 
\begin{equation} \label{eq_Fmonvalueofsection}
    F_{(m)}(s|_{m}) = F_{U}(s)|_{m}.
\end{equation}
For GVBs, injectivity and surjectivity of morphisms is easy to handle. 
\begin{proposition} \label{eq_GVBinjectiveequivalent}
    Let $F: \scrS \rightarrow \scrP$ be a GVB morphism. Then the following properties are equivalent:
    \begin{enumerate}
        \item $F_{U}$ is injective for every $U \in \Op(M)$;
        \item There is an open cover $\{ U_{\alpha} \}_{\alpha \in I}$ of $M$, such that $F_{U_{\alpha}}$ is injective for all $\alpha \in I$;
        \item $F_{0}: \scrS(M) \rightarrow \scrP(M)$ is injective;
        \item $F_{m}: \scrS_{m} \rightarrow \scrP_{m}$ is injective for every $m \in M$.
    \end{enumerate}
    We will simply say that $F$ is injective.
\end{proposition}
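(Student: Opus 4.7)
My plan is to verify the chain $(1) \Rightarrow (2) \Rightarrow (4) \Rightarrow (1)$ together with $(1) \Leftrightarrow (3)$. The implications $(1) \Rightarrow (2)$ (use the cover $\{M\}$) and $(1) \Rightarrow (3)$ (set $U = M$) are immediate, and $(4) \Rightarrow (1)$ is a standard stalk-local argument: if $s \in \scrS(U)$ satisfies $F_U(s) = 0$, then $F_m([s]_m) = [F_U(s)]_m = 0$ for every $m \in U$, so the stalkwise injectivity hypothesis gives $[s]_m = 0$, and the sheaf separation axiom of $\scrS$ then forces $s = 0$.

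The two substantive implications, $(3) \Rightarrow (1)$ and $(2) \Rightarrow (4)$, both require transferring injectivity information from a bigger open set (all of $M$ in the first case, one of the $U_\alpha$'s in the second) to an arbitrary open subset, and I will handle both by the same extension-by-zero trick. Given $s \in \scrS(U)$ with $F_U(s) = 0$ (for $(3) \Rightarrow (1)$), or a germ $[s]_m$ killed by $F_m$ which, after shrinking, is represented by some $s \in \scrS(W)$ with $W \subseteq U_\alpha$ and $F_W(s|_W) = 0$ (for $(2) \Rightarrow (4)$), I pick a bump function $\eta \in \cifty_\calM(M)$ with $\eta(m) = 1$ and $\supp(\eta) \subseteq U$ (respectively $\supp(\eta) \subseteq W$); such $\eta$ exists by the graded partition of unity already invoked in Lemma \ref{lem_fiber}. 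The product $\eta \cdot s$ glues with the zero section on the complement of $\supp(\eta)$ to produce a section $\tilde s$ defined on $M$ (respectively on $U_\alpha$). Since $F$ is a $\cifty_\calM$-linear morphism of sheaves, $F(\tilde s)$ is the extension by zero of $\eta \cdot F(s) = 0$ and hence vanishes; the global (respectively $U_\alpha$-wise) injectivity hypothesis then kills $\tilde s$, so $s$ vanishes on the open set where $\eta = 1$. Letting $m$ range over $U$ and invoking sheaf separation concludes both arguments.

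The only genuine technical ingredient is the extension-by-zero step, which rests entirely on the existence of bump functions on a graded manifold and on the compatibility of the $\cifty_\calM$-action with restriction. With those in hand, the remainder is routine sheaf manipulation, and I do not anticipate any hidden obstacle.
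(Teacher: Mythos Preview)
Your proof is correct and uses the same core technique as the paper: extension by zero via a bump function to transfer injectivity from a larger open set to a smaller one. The only difference is in the chaining of implications: the paper proves $(1)\Rightarrow(2)\Rightarrow(3)\Rightarrow(1)$ together with $(1)\Leftrightarrow(4)$, whereas you prove $(1)\Rightarrow(2)\Rightarrow(4)\Rightarrow(1)$ together with $(1)\Leftrightarrow(3)$. In particular, the paper's $(2)\Rightarrow(3)$ needs no bump function at all (just restrict $s\in\scrS(M)$ to each $U_\alpha$ and use the sheaf axiom), so the paper invokes the extension-by-zero trick only once, for $(3)\Rightarrow(1)$; you invoke it twice. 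This is a cosmetic reorganization rather than a genuinely different argument.
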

\begin{proof}
The equivalence of these statements is valid for any sheaf of graded $\cifty_{\calM}$-modules. The proof of \textit{1} $\Rightarrow$ \textit{2} is trivial. Next, assume that \textit{2} holds. Let $F_{0}(s) = 0$ for some $s \in \scrS(M)$. But then
\begin{equation}
    0 = F_{0}(s)|_{U_{\alpha}} = F_{U_{\alpha}}(s|_{U_{\alpha}}),
\end{equation}
for all $\alpha \in I$. This forces $s|_{U_{\alpha}} = 0$ and since $\scrS$ is a sheaf, this implies $s = 0$. Hence $F_{0}$ is injective and \textit{3} holds. Next, let us assume that \textit{3} holds. Let $U \in \Op(M)$ be arbitrary, and suppose there is $s \in \scrS(U)$ such that $F_{U}(s) = 0$. For each $m \in U$, find an open subset $V \in \Op_{m}(U)$ satisfying $\ol{V} \subseteq U$. Let $\lambda \in \cifty_{\calM}(M)$ be a bump function satisfying $\supp(\lambda) \subseteq U$ and $\lambda|_{V} = 1$. Let $\ol{s} := \lambda \cdot s \in \scrS(M)$. This is a section defined by requiring
\begin{equation}
    \ol{s}|_{U} := \lambda|_{U} \cdot s, \; \; \ol{s}|_{\supp(\lambda)^{c}} := 0,
\end{equation}
for an open cover $\{ U, \supp(\lambda)^{c} \}$ of $M$. We claim that $F_{0}(\ol{s}) = 0$. This follows from 
\begin{align}
    F_{0}(\ol{s})|_{U} = & \ F_{U}( \lambda|_{U} \cdot s) = \lambda_{U} \cdot F_{U}(s) = 0, \\
    F_{0}(\ol{s})|_{\supp(\lambda)^{c}} = & \ F_{\supp(\lambda)^{c}}( \ol{s}|_{\supp(\lambda)^{c}}) = 0.
\end{align}
Since $F_{0}$ is assumed injective, this forces $\ol{s} = 0$, and thus $s|_{V} = \ol{s}|_{V} = 0$. Since we can find such $V$ for any $m \in U$, this proves that $s = 0$ and $F_{U}$ is proved injective. The implication \textit{3} $\Rightarrow$ \textit{1} is now proved. The proof of \textit{1} $\Leftrightarrow$ \textit{4} is a standard exercise in any sheaf theory book.
\end{proof}
\begin{proposition} \label{eq_GVBsurjectiveequivalent}
    Let $F: \scrS \rightarrow \scrP$ be a GVB morphism. Then the following properties are equivalent:
    \begin{enumerate}
        \item $F_{U}$ is surjective for every $U \in \Op(M)$;
        \item There is an open cover $\{ U_{\alpha} \}_{\alpha \in I}$ of $M$, such that $F_{U_{\alpha}}$ is surjective for all $\alpha \in I$;
        \item $F_{0}: \scrS(M) \rightarrow \scrP(M)$ is surjective;
        \item $F_{m}: \scrS_{m} \rightarrow \scrP_{m}$ is surjective for every $m \in M$.
    \end{enumerate}
    We will simply say that $F$ is surjective.
\end{proposition}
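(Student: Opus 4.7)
The plan is to establish the cycle $1 \Rightarrow 2 \Rightarrow 4 \Rightarrow 1$, together with $1 \Rightarrow 3 \Rightarrow 4$, to secure all four equivalences. Unlike the injectivity statement of Proposition \ref{eq_GVBinjectiveequivalent}, which only used that $\scrS$ is a sheaf, the surjectivity statement genuinely relies on $\scrP$ being locally freely and finitely generated, together with the existence of graded partitions of unity on $\calM$.

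The implications $1 \Rightarrow 2$ and $1 \Rightarrow 3$ are immediate. For $2 \Rightarrow 4$ (and identically for $3 \Rightarrow 4$ with $U_{\alpha}$ replaced by $M$), one first represents any germ $[t]_{m} \in \scrP_{m}$ by a section on some neighborhood $V \subseteq U_{\alpha}$ of $m$, using the surjectivity of the germ map for sheaves of $\cifty_{\calM}$-modules. Multiplying by a bump function $\eta \in \cifty_{\calM}(M)$ supported in $V$ with $\eta \equiv 1$ near $m$ and extending by zero produces $\tilde{t} \in \scrP(U_{\alpha})$ with $[\tilde{t}]_{m} = [t]_{m}$. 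Applying surjectivity of $F_{U_{\alpha}}$ yields $s \in \scrS(U_{\alpha})$ with $F_{m}([s]_{m}) = [t]_{m}$.

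The main content is $4 \Rightarrow 1$, carried out in two steps. The first is a local lifting. Given $U \in \Op(M)$, $t \in \scrP(U)$, and $m \in U$, choose a neighborhood $V$ of $m$ inside $U$ on which $\scrP$ admits a local frame $p_{1}, \dotsc, p_{r}$ and write $t|_{V} = \sum_{i} g^{i} \cdot p_{i}$. By surjectivity on the stalk $\scrP_{m}$, each $[p_{i}]_{m}$ lifts to a germ $\sigma_{i} \in \scrS_{m}$, which is represented after shrinking to a common neighborhood $W \subseteq V$ of $m$ by a section $s_{i} \in \scrS(W)$ with $F_{W}(s_{i}) = p_{i}|_{W}$. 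Then $s_{W} := \sum_{i} g^{i}|_{W} \cdot s_{i} \in \scrS(W)$ satisfies $F_{W}(s_{W}) = t|_{W}$. Ranging over $m \in U$, this produces an open cover $\{W_{\alpha}\}_{\alpha \in A}$ of $U$ together with local preimages $s_{\alpha} \in \scrS(W_{\alpha})$ of $t|_{W_{\alpha}}$.

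The second step is the gluing, and it is the main obstacle: the local preimages $s_{\alpha}$ need not agree on overlaps, so the sheaf property cannot be invoked directly. This is resolved using a graded partition of unity. Choose a locally finite refinement $\{\widetilde{W}_{\beta}\}$ of $\{W_{\alpha}\}$ together with a subordinate partition of unity $\{\lambda_{\beta}\} \subset \cifty_{\calM}(M)$ with $\supp(\lambda_{\beta}) \subseteq W_{\alpha(\beta)}$. Each product $\lambda_{\beta} \cdot s_{\alpha(\beta)}$ then extends by zero to an element of $\scrS(U)$; local finiteness ensures that $s := \sum_{\beta} \lambda_{\beta} \cdot s_{\alpha(\beta)}$ defines a section in $\scrS(U)$; and $\cifty_{\calM}$-linearity of $F$ together with $\sum_{\beta} \lambda_{\beta} = 1$ gives $F_{U}(s) = \sum_{\beta} \lambda_{\beta} \cdot t = t$, establishing surjectivity of $F_{U}$.
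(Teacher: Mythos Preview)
Your proof is correct. Both your argument and the paper's hinge on the same partition-of-unity gluing to manufacture a preimage when the local lifts disagree on overlaps, but the logical routes differ. The paper runs the cycle $1 \Rightarrow 2 \Rightarrow 3 \Rightarrow 1$ and then treats $1 \Leftrightarrow 4$ separately as ``standard'', noting however that the $4 \Rightarrow 1$ direction requires the nontrivial fact that the image presheaf $\im(F)$ is already a sheaf (argued via $\im(F) \cong \scrP/\ker(F)$). You instead run $1 \Rightarrow 2 \Rightarrow 4 \Rightarrow 1$ and prove $4 \Rightarrow 1$ directly with a partition of unity, which sidesteps the image-sheaf issue entirely; this is arguably cleaner. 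One minor remark: in your local lifting step you pass through a local frame for $\scrP$ and lift each generator $[p_i]_m$, but this is unnecessary---you could lift $[t]_m$ itself to a germ in $\scrS_m$ and represent it on a small neighbourhood. The detour through the frame is harmless but not where the GVB hypothesis is actually needed; the essential use of partitions of unity (and hence of the $\cifty_\calM$-module structure) is in the gluing step, exactly as you identify.
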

\begin{proof}
    Again, the proof of \textit{1} $\Rightarrow$ \textit{2} is trivial. Let us assume that \textit{2} holds. Let $t \in \scrP(M)$ be an arbitrary section. By assumption, for each $\alpha \in I$, there is a section $s_{\alpha} \in \scrS(U_{\alpha})$, such that $F_{U_{\alpha}}(s_{\alpha}) = t|_{U_{\alpha}}$. Let $\{ \rho_{\alpha} \}_{\alpha \in I}$ be a partition of unity subordinate to $\{ U_{\alpha} \}_{\alpha \in I}$, and let $s := \sum_{\alpha \in I} \rho_{\alpha} \cdot s_{\alpha}$. We claim that $F_{0}(s) = t$. To see this, note that
    \begin{equation}
        F_{0}(s) = F_{0}( \sum_{\alpha \in I} \rho_{\alpha} \cdot s_{\alpha}) = \sum_{\alpha \in I} \rho_{\alpha} \cdot F_{U_{\alpha}}(s_{\alpha}) = \sum_{\alpha \in I} \rho_{\alpha} \cdot t|_{U_{\alpha}} = t. 
    \end{equation}
    Manipulations with the partitions unity may seem a little too courageous. However, by carefully examining the definitions, they are correct, see Proposition 3.41 in \cite{Vysoky:2022gm}. Hence $F_{0}$ is surjective and the implication \textit{2} $\Rightarrow$ \textit{3} is proved. Let us assume that \textit{3} holds. Let $U \in \Op(M)$ and $t \in \scrP(U)$. For each $m \in U$, one can find $V_{(m)} \in \Op_{m}(U)$ and $t_{(m)} \in \scrP(M)$, such that $t|_{V_{(m)}} = t_{(m)}|_{V_{(m)}}$. This is Proposition 5.5 in \cite{Vysoky:2022gm}. Then use the assumption to find $s_{(m)} \in \scrS(M)$ satisfying $F_{0}(s_{(m)}) = t_{(m)}$. Let $\{ \rho_{(m)} \}_{m \in U}$ be the partition of unity subordinate to the open cover $\{ V_{(m)} \}_{m \in U}$ of $U$. Define
    \begin{equation}
        s := \sum_{m \in U} \rho_{(m)} \cdot s_{(m)}|_{U} \in \scrS(U).
    \end{equation}
    We claim that $F_{U}(s) = t$. Indeed, one finds
    \begin{equation}
    \begin{split}
        F_{U}(s) = & \ \sum_{m \in U} \rho_{(m)} \cdot F_{U}(s_{(m)}|_{U}) = \sum_{m \in U} \rho_{(m)} \cdot t_{(m)}|_{U} = \sum_{m \in U} \rho_{(m)} \cdot t_{(m)}|_{V_{(m)}} \\
        = & \ \sum_{m \in U} \rho_{(m)} t|_{V_{(m)}} = t.
    \end{split}
    \end{equation}
    We conclude that $F_{U}$ is surjective and \textit{3} $\Rightarrow$ \textit{1} is proved. The proof of \textit{1} $\Leftrightarrow$ \textit{4} is again standard, except one has to use the nontrivial fact that for any GVB morphism $F: \scrS \rightarrow \scrP$, the image presheaf $\im(F)$ is a sheaf. This follows from the fact that $\im(F) \cong \scrP / \ker(F)$ and quotients by sheaves of $\cifty_{\calM}$-submodules are sheaves. This is proved similarly to Proposition 3.46 in \cite{Vysoky:2022gm}.
\end{proof}
Both of the above two statements hold for morphisms of any sheaves of $\cifty_{\calM}$-modules. The following statement is specific for GVBs.
\begin{proposition} \label{tvrz_GVBsurjectivesplits}
Let $F: \scrS \rightarrow \scrP$ be a surjective GVB morphism. Then there exists its right inverse, that is a GVB morphism $G: \scrP \rightarrow \scrS$ satisfying 
\begin{equation}
    F \circ G = \1_{\scrP}.
\end{equation}
In other words, every short exact sequence in $\svbun$ splits. 
\end{proposition}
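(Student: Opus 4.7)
The plan is to construct a $\cifty_{\calM}(M)$-linear splitting $G_{0}: \scrP(M) \to \scrS(M)$ and then promote it to a GVB morphism via Proposition \ref{prop_GVBsoveridentity}. The construction combines local splittings on a trivializing cover of $\scrP$ with a partition-of-unity gluing argument, mirroring the style of the proof of Proposition \ref{eq_GVBsurjectiveequivalent}.

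First, I pick an open cover $\{U_{\alpha}\}_{\alpha \in I}$ of $M$ together with local frames $\{t^{\alpha}_{1}, \dotsc, t^{\alpha}_{r}\}$ for $\scrP$ on each $U_{\alpha}$. Since $F$ is surjective, Proposition \ref{eq_GVBsurjectiveequivalent} yields sections $s^{\alpha}_{i} \in \scrS(U_{\alpha})$ with $|s^{\alpha}_{i}| = |t^{\alpha}_{i}|$ and $F_{U_{\alpha}}(s^{\alpha}_{i}) = t^{\alpha}_{i}$. Extending the assignment $t^{\alpha}_{i} \mapsto s^{\alpha}_{i}$ by $\cifty_{\calM}(U_{\alpha})$-linearity gives a degree-zero $\cifty_{\calM}(U_{\alpha})$-module morphism $G_{\alpha}: \scrP(U_{\alpha}) \to \scrS(U_{\alpha})$ satisfying $F_{U_{\alpha}} \circ G_{\alpha} = \id_{\scrP(U_{\alpha})}$.

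Second, I fix a partition of unity $\{\rho_{\alpha}\}_{\alpha \in I}$ subordinate to $\{U_{\alpha}\}$. For any $t \in \scrP(M)$, the section $\rho_{\alpha}|_{U_{\alpha}} \cdot G_{\alpha}(t|_{U_{\alpha}}) \in \scrS(U_{\alpha})$ has support contained in $\supp(\rho_{\alpha}) \subseteq U_{\alpha}$, so it glues with the zero section on $\supp(\rho_{\alpha})^{c}$ to a unique global section of $\scrS$, which I still denote $\rho_{\alpha} \cdot G_{\alpha}(t|_{U_{\alpha}})$. Then I set
$$G_{0}(t) := \sum_{\alpha \in I} \rho_{\alpha} \cdot G_{\alpha}(t|_{U_{\alpha}}) \in \scrS(M),$$
which is a well-defined locally finite sum. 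A direct check shows that $G_{0}$ is $\cifty_{\calM}(M)$-linear of degree zero, and the splitting property follows from
$$F_{0}(G_{0}(t)) = \sum_{\alpha \in I} \rho_{\alpha} \cdot F_{U_{\alpha}}(G_{\alpha}(t|_{U_{\alpha}})) = \sum_{\alpha \in I} \rho_{\alpha} \cdot t|_{U_{\alpha}} = t,$$
where the last equality uses $\sum_{\alpha} \rho_{\alpha} = 1$. Proposition \ref{prop_GVBsoveridentity} then promotes $G_{0}$ to a GVB morphism $G: \scrP \to \scrS$ over $\1_{\calM}$ satisfying $F \circ G = \1_{\scrP}$.

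The main technical obstacle is the careful handling of extensions by zero, multiplications by bump functions, and locally finite sums of graded sections. These steps are not new: they rely on the graded partition-of-unity machinery of \cite{Vysoky:2022gm}, exactly as invoked in the implications $2 \Rightarrow 3$ and $3 \Rightarrow 1$ of Proposition \ref{eq_GVBsurjectiveequivalent}. Once one trusts those manipulations, the rest is pure $\cifty_{\calM}(U_{\alpha})$-linear algebra using the chosen frames.
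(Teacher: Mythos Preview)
Your proof is correct and follows essentially the same approach as the paper: construct local right inverses $G_{\alpha}$ using local frames for $\scrP$ and surjectivity of $F_{U_{\alpha}}$, glue them into a global $G_{0}$ via a partition of unity, and then invoke Proposition~\ref{prop_GVBsoveridentity}. The paper's proof is slightly terser about the extension-by-zero and locally finite sum manipulations, but the argument is the same.
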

\begin{proof}
Let $\{ U_{\alpha} \}_{\alpha \in I}$ be an open cover of $M$, such that for every $\alpha \in I$, there is a local frame $t_{1}^{(\alpha)}, \dots, t_{q}^{(\alpha)}$ for $\scrP$ over $U_{\alpha}$. Since $F_{U_{\alpha}}: \scrS(U_{\alpha}) \rightarrow \scrP(U_{\alpha})$ is surjective, there is $s_{j}^{(\alpha)} \in \scrS(U_{\alpha})$ for each $\alpha \in I$ and $j \in \{1, \dots, q\}$, such that $F_{U_{\alpha}}(s_{j}^{(\alpha)}) = t_{j}^{(\alpha)}$. For each $\alpha \in I$, one can thus define a $\cifty_{\calM}(U_{\alpha})$-module morphism $G^{(\alpha)}: \scrP(U_{\alpha}) \rightarrow \scrS(U_{\alpha})$ by uniquely extending the formula 
\begin{equation}
    G^{(\alpha)}( t_{j}^{(\alpha)}) := s_{j}^{(\alpha)},
\end{equation}
for all $j \in \{1,\dots,q\}$. Let $\{ \rho_{\alpha} \}_{\alpha \in I}$ be a partition of unity subordinate to $\{ U_{\alpha} \}_{\alpha \in I}$. For each $t \in \scrP(M)$, define $G_{0}: \scrP(M) \rightarrow \scrS(M)$ by 
\begin{equation}
    G_{0}(t) := \sum_{\alpha \in I} \rho_{\alpha} \cdot G^{(\alpha)}(t|_{U_{\alpha}}).
\end{equation}
It is is easy to see that $G_{0}$ is a $\cifty_{\calM}(M)$-module morphism satisfying $F_{0} \circ G_{0} = \1_{\scrP(M)}$. If $G: \scrP \rightarrow \scrS$ is the corresponding morphism of sheaves, the uniqueness of the correspondences in Proposition \ref{prop_GVBsoveridentity} ensures that $F_{U} \circ G_{U} = \1_{\scrP(U)} = (\1_{\scrP})_{U}$, and the conclusion follows.
\end{proof}

Finally, in ordinary geometry, the properties of vector bundle morphisms can be examined fiber-wise, turning the geometrical statement into linear algebra. Similar observations remain valid in the graded geometry. 

\begin{proposition} \label{tvrz_fiberinjsur}
    Let $F: \scrS \rightarrow \scrP$ be a GVB morphism. Let $m \in M$ be arbitrary, and let $F_{(m)}: \scrS_{(m)} \rightarrow \scrP_{(m)}$ be the induced map of fibers.
    \begin{enumerate}
        \item $F_{(m)}$ is injective, iff there exists $U \in \Op_{m}(M)$, such that $F|_{U}$ has a left inverse.
        \item $F_{(m)}$ is surjective, iff there exists $U \in \Op_{m}(M)$, such that $F|_{U}$ has a right inverse. 
    \end{enumerate}
\end{proposition}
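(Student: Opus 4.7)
Both ``if'' implications are immediate from the functoriality of the fiber construction: if $G \circ F|_{U} = \1_{\scrS|_{U}}$ (respectively $F|_{U} \circ G = \1_{\scrP|_{U}}$), then applying $(-)_{(m)}$ yields $G_{(m)} \circ F_{(m)} = \id_{\scrS_{(m)}}$ (respectively $F_{(m)} \circ G_{(m)} = \id_{\scrP_{(m)}}$), so $F_{(m)}$ is injective (respectively surjective). All the substance therefore lies in the two converse directions, and in both cases my strategy is the same: exhibit an open neighborhood $U \in \Op_{m}(M)$ on which $\scrP|_{U}$ carries a particular local frame, then define $G$ by its values on this frame and invoke Proposition \ref{prop_GVBsoveridentity}.

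For item 1, I would pick a common open $V \in \Op_{m}(M)$ on which $\scrS$ and $\scrP$ carry local frames $s_{1}, \dots, s_{r}$ and $t_{1}, \dots, t_{q}$. Since $F_{(m)}$ is a degree-zero injective map of graded vector spaces, applying Steinitz exchange degree by degree yields indices $i_{1}, \dots, i_{q-r}$ such that
\begin{equation}
F(s_{1})|_{m}, \dots, F(s_{r})|_{m}, \, t_{i_{1}}|_{m}, \dots, t_{i_{q-r}}|_{m}
\end{equation}
is a graded basis of $\scrP_{(m)}$. The transition matrix $C$ from this tuple to $(t_{1}, \dots, t_{q})$ is a square matrix of graded functions on $V$ whose value at $m$ (built from its degree-zero entries) is the above numerical change-of-basis matrix, hence invertible. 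Consequently $C$ itself is invertible over some smaller $U \in \Op_{m}(V)$, so the tuple is a local frame for $\scrP|_{U}$, and the $\cifty_{\calM}(U)$-linear extension of $F(s_{i}) \mapsto s_{i}$, $t_{i_{k}} \mapsto 0$ gives the required left inverse $G$.

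Item 2 is dual. With the same frames, surjectivity of $F_{(m)}$ lets me pick preimages $\tilde{e}_{j} \in \scrS_{(m)}$ of $t_{j}|_{m}$ of matching degree, and to expand $\tilde{e}_{j} = \sum_{i} c^{i}_{j} \, s_{i}|_{m}$ in the basis $s_{1}|_{m}, \dots, s_{r}|_{m}$ with real scalars $c^{i}_{j}$. Setting $\tilde{s}_{j} := \sum_{i} c^{i}_{j} \, s_{i} \in \scrS(V)$ produces sections with $F(\tilde{s}_{j})|_{m} = t_{j}|_{m}$, so the transition matrix from $(F(\tilde{s}_{1}), \dots, F(\tilde{s}_{q}))$ to $(t_{1}, \dots, t_{q})$ has identity value at $m$. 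By the same invertibility principle this matrix is invertible over some $U \in \Op_{m}(V)$; hence $F(\tilde{s}_{1}), \dots, F(\tilde{s}_{q})$ is a local frame for $\scrP|_{U}$ and the $\cifty_{\calM}(U)$-linear extension of $F(\tilde{s}_{j}) \mapsto \tilde{s}_{j}$ is the desired right inverse.

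The main technical ingredient used in both converse directions — and the one place where the $\Z$-graded setting genuinely matters — is the principle that a square matrix of graded functions on $V$ is invertible on a neighborhood of any point at which its numerical value at that point (assembled from its degree-zero entries) is invertible. In contrast with the supermanifold case, a degree-zero $\Z$-graded function need not differ from its body by a nilpotent, so this fact is not automatic; it does however follow from the formal power series description of $\cifty_{(m_{j})}$ in Example \ref{example_graded_domain}, since for any degree-zero power series with nonzero body the standard Neumann series inverts it coefficient-wise. Once this point is granted, the remaining steps of both arguments are elementary graded linear algebra over $\cifty_{\calM}(U)$.
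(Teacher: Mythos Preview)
Your proof is correct and follows essentially the same route as the paper's (Appendix A): both arguments hinge on the Neumann-series invertibility of a square matrix of graded functions whose degree-zero value at $m$ is invertible, and then build the one-sided inverse by prescribing it on a suitable local frame for $\scrP$. The only cosmetic difference is that the paper extracts and inverts an $r \times r$ submatrix of the coefficient matrix of $F$ and sends the remaining original frame vectors to zero, whereas you first show that $F(s_{1}), \dots, F(s_{r})$ can be completed to a local frame for $\scrP|_{U}$ and read off $G$ directly from that frame; the two packagings are interchangeable.
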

We prove this statement in Appendix A. This statement has an immediate corollary.
\begin{corollary} \label{cor_fiberinjsur}
    Let $F: \scrS \rightarrow \scrP$ be a GVB morphism. 
    \begin{enumerate}
        \item $F$ is surjective, iff $F_{(m)}$ is surjective for every $m \in M$;
        \item $F$ is an isomorphism, iff $F_{(m)}$ an isomorphism for every $m \in M$;
        \item $F$ is injective, if $F_{(m)}$ is injective for every $m \in M$. The converse implication is not true. 
    \end{enumerate}
\end{corollary}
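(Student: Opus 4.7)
The approach is to reduce every part of the corollary to Proposition \ref{tvrz_fiberinjsur} by combining it with the local-to-global criteria of Propositions \ref{eq_GVBinjectiveequivalent} and \ref{eq_GVBsurjectiveequivalent}. The key bridge is that a left (resp.\ right) inverse of $F|_U$ makes $F_U$ injective (resp.\ surjective), which is exactly condition \textit{2} of the respective proposition and is therefore equivalent to global injectivity (resp.\ surjectivity) of $F$.

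\textbf{Items 1 and 3.} For the forward direction of item 1, I would lift an arbitrary $t|_{m} \in \scrP_{(m)}$ to a local section $t \in \scrP(U)$ and use Proposition \ref{eq_GVBsurjectiveequivalent} to produce $s \in \scrS(U)$ with $F_U(s) = t$; formula (\ref{eq_Fmonvalueofsection}) then yields $F_{(m)}(s|_{m}) = t|_{m}$. For the converse, pointwise surjectivity of the fibers supplies, via Proposition \ref{tvrz_fiberinjsur}(2), an open cover $\{ U_{\alpha} \}_{\alpha \in I}$ on which $F|_{U_{\alpha}}$ admits a right inverse; in particular each $F_{U_{\alpha}}$ is surjective, and Proposition \ref{eq_GVBsurjectiveequivalent} promotes this to global surjectivity of $F$. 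The forward direction of item 3 is entirely analogous, using Proposition \ref{tvrz_fiberinjsur}(1) and Proposition \ref{eq_GVBinjectiveequivalent}. For the failure of the converse in item 3, I plan to take $\calM = \R$ viewed as an ordinary smooth manifold (trivial grading) and the morphism $F: \cifty_{\calM} \to \cifty_{\calM}$ given by $F(s) := x \cdot s$, where $x$ is the global coordinate: this is $\cifty_{\calM}$-linear and injective (multiplication by $x$ has no smooth kernel), whereas $F_{(0)} = 0$ since $x$ vanishes at the origin.

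\textbf{Item 2.} One direction is immediate: an isomorphism of GVBs induces isomorphisms of all fibers via (\ref{eq_Fmonvalueofsection}). For the converse, fiberwise bijectivity yields simultaneous surjectivity (by item 1) and injectivity (by item 3) of $F$. It remains to promote this to invertibility inside $\svbun$. I would argue via Proposition \ref{tvrz_GVBsurjectivesplits}: surjectivity produces a morphism $G: \scrP \to \scrS$ satisfying $F \circ G = \1_{\scrP}$; then $F \circ (G \circ F - \1_{\scrS}) = F - F = 0$, and injectivity of $F$ forces $G \circ F = \1_{\scrS}$, so $F$ is an isomorphism in $\svbun$.

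\textbf{Main obstacle.} The conceptual heavy lifting already sits inside Proposition \ref{tvrz_fiberinjsur}, which is postponed to the appendix; given that result, the corollary reduces to bookkeeping. The most delicate point is the converse of item 2, where one must ensure that the inverse of $F$ is itself a morphism in $\svbun$ and not merely a set-theoretic bijection; this is precisely what the splitting-based argument above secures, sidestepping any direct construction of the inverse.
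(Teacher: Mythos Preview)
Your proposal is correct and tracks the paper's argument closely for items 1 (converse), 3, and the counterexample; the forward direction of item 1 you do by a direct section-lifting, whereas the paper invokes Proposition \ref{tvrz_GVBsurjectivesplits} to obtain a global right inverse and then passes to fibers---both are one-line arguments.

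The only genuine deviation is the converse of item 2. The paper dispatches it tersely by saying it is ``proved in the same way as in the surjective case'': Proposition \ref{tvrz_fiberinjsur} produces an open cover on which $F|_{U_{\alpha}}$ is an isomorphism, and one then appeals (implicitly) to the standard fact that a sheaf morphism which is locally an isomorphism is globally one. Your route instead deduces global surjectivity and injectivity from items 1 and 3, then uses the splitting of Proposition \ref{tvrz_GVBsurjectivesplits} together with injectivity to promote the right inverse $G$ to a two-sided inverse. This is a legitimate and arguably more self-contained argument, since it avoids the tacit local-to-global principle for isomorphisms and stays entirely within the propositions already established in the section.
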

\begin{proof}
When $F$ is surjective, it has a right inverse by Proposition \ref{tvrz_GVBsurjectivesplits}. Since the assignment $F \mapsto F_{(m)}$ respects compositions, it follows that $F_{(m)}$ has a right inverse, hence it is surjective for every $m \in M$. Conversely, if it is surjective for every $m \in M$, we can use Proposition \ref{tvrz_fiberinjsur} to find an open cover $\{ U_{\alpha} \}_{\alpha \in I}$ such that $F_{U_{\alpha}}$ is surjective for all $\alpha \in I$. Hence $F$ is surjective by Proposition \ref{eq_GVBsurjectiveequivalent}. 

If $F$ is an isomorphism, the inverse to each $F_{(m)}$ is $(F^{-1})_{(m)}$, so $F_{(m)}$ is an isomorphism for every $m \in M$. The converse statement is proved in the same way as in the surjective case. 

For the injectivity, we can only prove the if part. Proposition \ref{tvrz_fiberinjsur} together with Proposition \ref{eq_GVBinjectiveequivalent} shows that fiber-wise injective $F$ is injective. The converse statement does not hold even for ordinary vector bundles: Let $M = \R$, and let $\cifty_{\R}$ be the usual sheaf of smooth functions on $\R$. This is a sheaf of sections of a trivial vector bundle $\R \times \R$. Let $F_{0}: \cifty_{\R}(\R) \rightarrow \cifty_{\R}(\R)$ be defined as 
\begin{equation}
[F_{0}(f)](x) := x f(x),
\end{equation}
for all $f \in \cifty_{\R}(\R)$ and $x \in \R$. $F_{0}$ is injective, since $F_{0}(f) = 0$ implies $f(x) = 0$ for all $x \in \R \ssm \{0\}$ and thus $f = 0$ by continuity. It is not fiber-wise injective, since for each $x \in \R$, the induced linear map $F_{(x)}: \R \rightarrow \R$ takes the form $F_{(x)}y = x y$ for all $y \in \R$ and thus $F_{(0)} = 0$.
\end{proof}
\subsection{Subbundles} \label{sec_subbundles}
In this subsection, we must discuss some elementary properties of subbundles. Let $\scrS \in \svbun$ be a fixed GVB over a graded manifold $\calM$. We say that a subsheaf $\scrL \subseteq \scrS$ is a \textbf{subbundle of $\scrS$}, if 
\begin{enumerate}
    \item $\scrL(U)$ is a $\cifty_{\calM}(U)$-submodule of $\scrS(U)$ for each $U \in \Op(M)$. We also say that $\scrL$ is a \textbf{sheaf of $\cifty_{\calM}$-submodules}.
    \item There exists a fixed subset $J \subseteq \{1, \dots, r\}$, such that for each $m \in M$, there exists $U \in \Op_{m}(M)$ and a local frame $s_{1}, \dots, s_{r}$ for $\scrS$ over $U$, such that $\{ s_{i} \}_{i \in J}$ is a local frame for $\scrL$ over $U$. We say that $s_{1}, \dots, s_{r}$ is \textbf{adapted to $\scrL$}.
\end{enumerate}
Since $\scrL$ is obviously locally freely and finitely generated of a constant graded rank, it is a GVB on its own. 
\begin{proposition}
    Let $\scrL \subseteq \scrS$ be a subbundle of $\scrS$. Let $I: \scrL \rightarrow
     \scrS$ denote the canonical inclusion. 
     
     Then for each $m \in M$, the induced map $I_{(m)}: \scrL_{(m)} \rightarrow \scrS_{(m)}$ is injective. Consequently, one can canonically identify each fiber $\scrL_{(m)}$ with a subspace of $\scrS_{(m)}$. 
\end{proposition}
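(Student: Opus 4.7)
The plan is to reduce everything to an adapted local frame at $m$. By the definition of a subbundle, there exist $U \in \Op_{m}(M)$ and a local frame $s_{1}, \dots, s_{r}$ for $\scrS$ over $U$ such that $\{ s_{i} \}_{i \in J}$ is a local frame for $\scrL$ over $U$. As recalled just before Lemma \ref{lem_fiber}, local frames produce bases of fibers: the values $\{ s_{i}|_{m}^{\scrS} \}_{i=1}^{r}$ form a basis of $\scrS_{(m)}$, while the values $\{ s_{i}|_{m}^{\scrL} \}_{i \in J}$ form a basis of $\scrL_{(m)}$.

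Next, I would identify $I_{(m)}$ on this basis. The canonical inclusion $I: \scrL \to \scrS$ is a morphism of sheaves of $\cifty_{\calM}$-modules, hence a GVB morphism over the identity by Proposition \ref{prop_GVBsoveridentity}. The fiber-wise formula (\ref{eq_Fmonvalueofsection}) therefore gives, for each $i \in J$,
\[
I_{(m)}\bigl( s_{i}|_{m}^{\scrL} \bigr) \;=\; I_{U}(s_{i})|_{m}^{\scrS} \;=\; s_{i}|_{m}^{\scrS}.
\]
Thus $I_{(m)}$ sends the basis $\{ s_{i}|_{m}^{\scrL} \}_{i \in J}$ of $\scrL_{(m)}$ onto the subfamily $\{ s_{i}|_{m}^{\scrS} \}_{i \in J}$ of a basis of $\scrS_{(m)}$, which is linearly independent. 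Injectivity of $I_{(m)}$ follows. The canonical identification of $\scrL_{(m)}$ with a subspace of $\scrS_{(m)}$ is then simply via this injective map, which is intrinsic since $I$ itself is intrinsic.

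The only real pitfall is notational rather than mathematical: the symbol $s_{i}|_{m}$ has two legitimate readings, one in $\scrL_{(m)}$ and one in $\scrS_{(m)}$, and conflating them would make the statement look tautological. Marking them as $s_{i}|_{m}^{\scrL}$ and $s_{i}|_{m}^{\scrS}$ (or equivalently invoking (\ref{eq_Fmonvalueofsection}) for the inclusion $I$) keeps the argument honest, and no partition-of-unity or stalk-level machinery is needed beyond what has already been set up.
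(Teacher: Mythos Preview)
Your proof is correct and follows essentially the same approach as the paper's own proof: pick an adapted local frame over some $U \in \Op_{m}(M)$, note that its values give bases of the two fibers, and use (\ref{eq_Fmonvalueofsection}) to see that $I_{(m)}$ sends a basis of $\scrL_{(m)}$ to a linearly independent family in $\scrS_{(m)}$. Your added care in distinguishing $s_{i}|_{m}^{\scrL}$ from $s_{i}|_{m}^{\scrS}$ is a genuine clarification over the paper's more abbreviated notation.
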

\begin{proof}
    Let $m \in M$. There is $U \in \Op_{m}(M)$ and a local frame $s_{1}, \dots, s_{r}$ adapted to $\scrL$. In other words, $\{ s_{i} \}_{i \in J}$ is a local frame for $\scrL$ over $U$, where $J \subseteq \{1,\dots,r\}$ is a fixed subset. It follows that $\{ s_{i}|_{m} \}_{i \in J}$ is a basis for $\scrL_{(m)}$. One has $ I_{(m)}( s_{i}|_{m}) = I_{U}(s_{i})|_{m} = s_{i}|_{m}$ for each $i \in J$. Hence $I_{(m)}$ is injective.
\end{proof}

We will need the following technical statement:
\begin{proposition}[\textbf{Local frame completion}] \label{prop_localframecompl}
    Let $a_{1}, \dots, a_{q}$ be some collection of local sections of $\scrS$ over $U \in \Op(M)$. Suppose that there is $m \in U$, such that $a_{1}|_{m}, \dots a_{q}|_{m}$ are linearly independent vectors. 
    
    Then there exists $V \in \Op_{m}(U)$ and a local frame $s_{1}, \dots, s_{r}$ for $\scrS$ over $V$, such that $s_{i} = a_{i}|_{V} $ for every $i \in \{1,\dots,q\}$.
\end{proposition}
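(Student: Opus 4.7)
The plan is to reduce the statement to Proposition \ref{tvrz_fiberinjsur}. First, I would shrink $U$ to a smaller neighborhood $U' \in \Op_m(U)$ on which there exists a local frame $e_1, \dots, e_r$ for $\scrS$. Writing $a_i|_{U'} = \sum_{j=1}^r A^j_i\, e_j$ with $A^j_i \in \cifty_\calM(U')$ of degree $|a_i| - |e_j|$, the body values $A^j_i(m)$ automatically vanish whenever $|e_j| \neq |a_i|$, since graded functions of nonzero degree have vanishing body.

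Next, the linear independence of $a_1|_m, \dots, a_q|_m$ in $\scrS_{(m)}$ is to be exploited degree by degree. For each $d \in \Z$ let $q_d := \#\{i : |a_i| = d\}$ and $r_d := \#\{j : |e_j| = d\}$; the real matrix $\bigl( A^j_i(m) \bigr)_{|e_j|=|a_i|=d}$ has column rank $q_d$, hence $q_d \le r_d$. Standard linear algebra over $\R$ then produces a subset $J_d \subseteq \{j : |e_j| = d\}$ of cardinality $r_d - q_d$ such that $\{a_i|_m : |a_i| = d\} \cup \{e_j|_m : j \in J_d\}$ is a basis of the degree-$d$ component of $\scrS_{(m)}$. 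Setting $J := \bigcup_d J_d$ and relabelling, I define $s_i := a_i|_{U'}$ for $i \le q$ and take $s_{q+1}, \dots, s_r$ to be the sections $e_j|_{U'}$ for $j \in J$ in some order.

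With these data in hand, consider the trivial GVB $\scrT$ over $\calM|_{U'}$ of graded rank matching $(|s_i|)_{i=1}^r$, with canonical frame $\epsilon_1, \dots, \epsilon_r$, and the GVB morphism $\fF \colon \scrT \to \scrS|_{U'}$ determined by $\fF(\epsilon_i) := s_i$ via $\cifty_\calM|_{U'}$-linear extension. By construction, $\fF_{(m)}$ sends the basis $\{\epsilon_i|_m\}$ to the basis $\{s_i|_m\}$ of $\scrS_{(m)}$, so it is a graded linear isomorphism. Proposition \ref{tvrz_fiberinjsur} then furnishes one neighborhood of $m$ on which $\fF$ has a left inverse and another on which it has a right inverse; intersecting them yields $V \in \Op_m(U')$ on which $\fF|_V$ is an isomorphism of GVBs. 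Consequently $s_1|_V, \dots, s_r|_V$ is a local frame for $\scrS$ over $V$, and since $s_i|_V = a_i|_V$ for $i \le q$, this is the desired completion.

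The main obstacle will be the degree-wise bookkeeping: one has to be careful that the extension of $\{a_i|_m\}$ to a basis respects the $\Z$-grading of $\scrS_{(m)}$, which hinges on observing that $A^j_i(m)$ is block-diagonal with respect to degree so that the inequalities $q_d \le r_d$ hold in each degree separately. Once the morphism $\fF$ is in place and shown to be fiber-wise an isomorphism at $m$, the rest is a direct appeal to the fiber-to-neighborhood criterion already proved.
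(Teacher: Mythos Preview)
Your proposal is correct and follows essentially the same route as the paper: pass to a trivializing neighborhood, complete $a_1|_m,\dots,a_q|_m$ to a basis of $\scrS_{(m)}$, encode the candidate frame as a GVB morphism from a trivial bundle that is a fiber isomorphism at $m$, and invoke Proposition~\ref{tvrz_fiberinjsur}. The only cosmetic differences are that the paper identifies $\scrS$ with $\cifty_{\calM}[K]$ and builds an endomorphism of $\scrS$ whose fiber map at $m$ is the identity, and it appeals directly to the isomorphism case (Proposition~\ref{prop_GVBfiberisoGVBiso}) rather than intersecting the two neighborhoods furnished by the injective and surjective halves.
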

\begin{proof}
    Note that a system $v_{1}, \dots, v_{r}$ of vectors of a graded vector space $V$ is called linearly independent, if its elements of degree $k$ form a linearly independent system of vectors in the ordinary vector space $V_{k}$, for each $k \in \Z$. Be aware that linear combinations of vectors of different degrees are not allowed.
    
    Since this is a local statement, we may assume without the loss of generality that $\scrS$ is freely and finitely generated and $U = M$. This also means that we can assume that $\scrS = \cifty_{\calM}[K]$ for some finite-dimensional graded vector space $K$, that is
    \begin{equation}
        \scrS(U) = \cifty_{\calM}(U) \otimes_{\R} K,
    \end{equation}
    for every $U \in \Op(M)$. $\cifty_{\calM}(U)$-module structure and sheaf restrictions are defined in an obvious way. See Example 2.37 and Definition 2.38 in \cite{Vysoky:2022gm}. For each $m \in M$, there is a canonical identification $\scrS_{(m)} \cong K$. By standard linear algebra, there is thus a basis $\theta_{1}, \dots, \theta_{r}$ for $K$, such that $\theta_{i} = a_{i}|_{m}$ for every $i \in \{1,\dots,q\}$. Now, for each $U \in \Op(M)$, define $F_{U}: \scrS(U) \rightarrow \scrS(U)$ as
    \begin{equation}
        F_{U}(1 \otimes \theta_{i}) := \left\{ \begin{array}{cc} a_{i}|_{U} & \text{ for } i \in \{1,\dots,q\}, \\
        1 \otimes \theta_{i}     & \text{ for } i \in \{q+1, \dots, r \}.
        \end{array} \right.
    \end{equation}
    By extending this by $\cifty_{\calM}(U)$-linearity, we obtain a morphism of $\cifty_{\calM}(U)$-modules natural in $U$, hence a GVB morphism $F: \scrS \rightarrow \scrS$. Moreover, the induced map of fibers $F_{(m)}: K \rightarrow K$ is just the identity. It follows from Proposition \ref{tvrz_fiberinjsur} that there exists $V \in \Op_{m}(M)$, such that $F|_{V}$ is an isomorphism. Finally, letting $s_{i} := F_{V}(1 \otimes \theta_{i})$ for each $i \in \{1, \dots, r\}$ then gives us a local frame $s_{1}, \dots, s_{r}$ for $\scrS$ over $V$ satisfying $s_{i} = a_{i}|_{V}$ for every $i \in \{1, \dots, q\}$.
\end{proof}

\begin{corollary}[\textbf{Completion of independent fiber vectors}] \label{cor_complindepdenent}
    Let $\{ \theta_{i} \}_{i=1}^{q}$ be a collection of linearly independent vectors of the fiber $\scrS_{(m)}$ for any given $m \in M$. 
    
    Then there exists $U \in \Op_{m}(M)$ and a local frame $s_{1}, \dots, s_{r}$ for $\scrS$ over $U$, such that $s_{i}|_{m} = \theta_{i}$ for every $i \in \{1, \dots, q \}$.
\end{corollary}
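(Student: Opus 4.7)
The plan is to reduce this corollary to Proposition \ref{prop_localframecompl} by producing local sections whose values at $m$ realize the given vectors $\theta_{1}, \dots, \theta_{q}$.

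First, since $\scrS$ is locally freely and finitely generated, I can shrink to an open neighborhood $W \in \Op_{m}(M)$ on which there exists a local frame $t_{1}, \dots, t_{r}$ for $\scrS$. Then $t_{1}|_{m}, \dots, t_{r}|_{m}$ is a basis for the fiber $\scrS_{(m)}$, so each $\theta_{i}$ admits a unique expansion $\theta_{i} = \sum_{j=1}^{r} c_{i}^{\,j}\, t_{j}|_{m}$ with $c_{i}^{\,j} \in \R$. (Degree considerations force $c_{i}^{\,j} = 0$ unless $|t_{j}| = |\theta_{i}|$, so each sum is homogeneous of the correct degree.)

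Next, define $a_{i} := \sum_{j=1}^{r} c_{i}^{\,j} \, t_{j} \in \scrS(W)$ for every $i \in \{1, \dots, q\}$. Since the evaluation map $s \mapsto s|_{m}$ is $\R$-linear, we get $a_{i}|_{m} = \sum_{j} c_{i}^{\,j} \, t_{j}|_{m} = \theta_{i}$. In particular, the vectors $a_{1}|_{m}, \dots, a_{q}|_{m}$ are linearly independent in $\scrS_{(m)}$ by hypothesis.

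Now I apply Proposition \ref{prop_localframecompl} to the collection $a_{1}, \dots, a_{q} \in \scrS(W)$: this yields some $U \in \Op_{m}(W) \subseteq \Op_{m}(M)$ and a local frame $s_{1}, \dots, s_{r}$ for $\scrS$ over $U$ such that $s_{i} = a_{i}|_{U}$ for all $i \in \{1, \dots, q\}$. Evaluating at $m$ gives $s_{i}|_{m} = a_{i}|_{m} = \theta_{i}$, which is exactly the claim. There is no real obstacle here; the only subtle point is keeping the graded degrees straight when expanding $\theta_{i}$ in the basis $t_{j}|_{m}$, but this is automatic since $\scrS_{(m)}$ is a graded vector space and the $\theta_{i}$ are homogeneous.
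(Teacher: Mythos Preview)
Your proof is correct and follows essentially the same route as the paper: produce sections whose values at $m$ are the $\theta_{i}$, then invoke Proposition~\ref{prop_localframecompl}. The paper appeals to the surjectivity of the evaluation map $\scrS(M) \to \scrS_{(m)}$ (discussed just after~(\ref{eq_fiberdefinition})) to obtain \emph{global} sections $a_{i}$, whereas you construct the $a_{i}$ explicitly as real linear combinations of a local frame over $W$; since Proposition~\ref{prop_localframecompl} only requires local sections, both versions work and the difference is cosmetic.
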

\begin{proof}
One can always find sections $a_{1}, \dots, a_{q} \in \scrS(M)$ satisfying $a_{i}|_{m} = \theta_{i}$ for each $i \in \{1, \dots, q\}$, see the paragraph under (\ref{eq_fiberdefinition}). By previous proposition, there exists $U \in \Op_{m}(M)$ and a local frame $s_{1}, \dots, s_{r}$ for $\scrS$ over $U$, such that $s_{i} = a_{i}|_{V}$ for every $i \in \{1, \dots, q \}$. In particular, one has $s_{i}|_{m} = a_{i}|_{m} = \theta_{i}$ for every $i \in \{1, \dots, q\}$. 
\end{proof}

We can now use these statements to prove that kernels and images of GVB morphisms are subbundles under some conditions.
\begin{proposition} \label{prop_kerimsubbundles}
Let $F: \scrS \rightarrow \scrP$ be a GVB morphism. Recall that $\ker(F) \subseteq \scrS$ and $\im(F) \subseteq \scrP$ are both subsheaves, see also the proof of Proposition \ref{eq_GVBsurjectiveequivalent}. 
\begin{enumerate}
    \item Let $F_{(m)}$ be injective for all $m \in M$. Then $\im(F)$ is a subbundle of $\scrP$;
    \item Let $F$ be surjective. Then $\ker(F)$ is a subbundle of $\scrS$;
    \item In general, the constant graded rank assumption on $F_{(m)}$ is not sufficient. 
\end{enumerate}
\end{proposition}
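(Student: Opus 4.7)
My plan is to construct adapted local frames for items 1 and 2 using Proposition \ref{prop_localframecompl}, and to settle item 3 by a counterexample exploiting the nilpotence of odd coordinates. The main technical obstacle is in item 2, where the completed frame must be corrected to land inside $\ker(F)$ without losing the frame property.

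For item 1, fix $m \in M$. Injectivity of $F_{(m)}$ together with Proposition \ref{tvrz_fiberinjsur} furnishes $U \in \Op_m(M)$ on which $F|_U$ has a left inverse and is hence injective. After shrinking $U$, pick a local frame $s_1, \dots, s_r$ of $\scrS$ over $U$ and set $t_i := F_U(s_i)$. Since $t_i|_m = F_{(m)}(s_i|_m)$ are linearly independent in $\scrP_{(m)}$, Proposition \ref{prop_localframecompl} extends $t_1, \dots, t_r$ to a local frame $t_1, \dots, t_q$ of $\scrP$ over some $V \in \Op_m(U)$. Because $F|_V$ is still injective, $t_1|_V, \dots, t_r|_V$ remains a local frame for $\im(F)|_V$, so the full $t_1, \dots, t_q$ is adapted to $\im(F)$.

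For item 2, fix $m \in M$ and choose a local frame $t_1, \dots, t_q$ of $\scrP$ on some $V \in \Op_m(M)$; by Proposition \ref{eq_GVBsurjectiveequivalent} lift each to $s_i \in \scrS(V)$ with $F_V(s_i) = t_i$. Their values $s_i|_m$ are linearly independent (their $F_{(m)}$-images form a basis), so Proposition \ref{prop_localframecompl} completes them, after shrinking $V$, to a local frame $s_1, \dots, s_r$ of $\scrS$. For $i > q$ expand $F_V(s_i) = \sum_{k=1}^{q} f^k_i \, t_k$ and set $\tilde{s}_i := s_i - \sum_{k=1}^{q} f^k_i \, s_k \in \ker(F)(V)$. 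The change of basis from $(s_1, \dots, s_r)$ to $(s_1, \dots, s_q, \tilde{s}_{q+1}, \dots, \tilde{s}_r)$ is unitriangular, hence the latter is again a local frame of $\scrS$. Expanding an arbitrary $s \in \ker(F)(V)$ in this frame and applying $F_V$ kills the $s_k$-coefficients by freeness of the $t_k$-frame, showing that $\tilde{s}_{q+1}, \dots, \tilde{s}_r$ is a local frame for $\ker(F)|_V$.

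For item 3, let $\calM$ carry an odd coordinate $\xi$ of degree $1$ and let $\scrS$ be the trivial GVB with frame $e_0, e_{-1}$ in degrees $0$ and $-1$. Define the GVB morphism $F: \scrS \rightarrow \scrS$ (over $\1_{\calM}$) by $F(e_0) := \xi \cdot e_{-1}$ and $F(e_{-1}) := 0$. Since $\xi(m) = 0$, $F_{(m)} \equiv 0$, which has constant zero graded rank. Yet $\ker(F)(M)$ contains both $e_{-1}$ in degree $-1$ and $\xi \, e_0$ in degree $1$ (because $\xi^2 = 0$), and comparing the graded vector space dimensions of $\ker(F)(M)$ against those of any free $\cifty_{\calM}$-module of fixed graded rank yields no consistent solution for the rank sequence. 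Hence $\ker(F)$ is not locally freely and finitely generated, so it fails to be a subbundle; the same example also refutes the image statement, as $\im(F) = \cifty_M \cdot \xi \, e_{-1}$ is similarly non-free.
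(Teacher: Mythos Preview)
Your proof is correct. Items 1 and 3 match the paper's approach almost exactly: the same use of Proposition \ref{prop_localframecompl} to complete $F_U(s_i)$ to a local frame of $\scrP$, and essentially the same counterexample (the paper takes $\scrS = \scrX_{\calM}$ with $F(\partial_x) = \xi\,\partial_\xi$, $F(\partial_\xi)=0$, which is your $e_0 \mapsto \xi\,e_{-1}$, $e_{-1}\mapsto 0$ in disguise). The paper phrases the failure slightly differently, observing that every section of $\im(F)$ has zero value at each point and hence cannot participate in an adapted frame; your module-theoretic rank obstruction is an equivalent way to see it.

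Item 2 is where you genuinely diverge. You give a direct local construction: lift a frame of $\scrP$, complete it to a frame of $\scrS$ via Proposition \ref{prop_localframecompl}, and then correct the extra generators by a block-unitriangular change of basis so that they lie in $\ker(F)$. The paper instead argues abstractly: it invokes the splitting of the short exact sequence (Proposition \ref{tvrz_GVBsurjectivesplits}) to obtain a section $S:\scrP\to\scrS$, observes that $\ker(F)\cong \scrS/\im(S)$, and then applies item 1 to the fiberwise-injective $S$ to conclude that $\im(S)$ is a subbundle, whence the quotient is a GVB; finally it applies item 1 once more to the inclusion $J:\ker(F)\hookrightarrow\scrS$. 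Your route is more elementary and self-contained, avoiding the quotient-bundle machinery; the paper's route is more conceptual and highlights that item 2 is formally a consequence of item 1.
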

\begin{proof}
    Let us prove \textit{1} first. It is easy to see that $\im(F)$ is a sheaf of $\cifty_{\calM}$-submodules of $\scrP$. For each $m \in M$, we must find a local frame for $\scrP$ over $U \in \Op_{m}(M)$ adapted to $\im(F)$. Let $s_{1}, \dots, s_{r}$ be a local frame for $\scrS$ over some $V \in \Op_{m}(M)$, and let $a_{i} := F_{V}(s_{i}) \in \scrP(V)$ for each $i \in \{1,\dots,r\}$. Since $a_{i}|_{m} = F_{(m)}(s|_{i})$, it follows that $a_{1}|_{m}, \dots, a_{r}|_{m}$ are linearly independent. By Proposition \ref{prop_localframecompl}, there exists $U \in \Op_{m}(V)$ and a local frame $t_{1}, \dots, t_{q}$ for $\scrP$ over $U$, such that $t_{i} = a_{i}|_{U} = F_{U}(s_{i}|_{U})$ for all $i \in \{1, \dots, r\}$. $F$ is injective by Corollary \ref{cor_fiberinjsur}. It is easy to see that $t_{1}, \dots, t_{q}$ is adapted to $\im(F)$. 

    \textit{2} can be rephrased to follow from \textit{1}. Indeed, if $F$ is surjective, we can form the following short exact sequence of sheaves of $\cifty_{\calM}$-modules:
    \begin{equation} \label{eq_kerFimFproofSES}
        \begin{tikzcd}[column sep=large]
            0 \arrow{r} & \ker(F) \arrow{r}{J} & \arrow[bend left=20, dashed]{l}{T} \scrS \arrow{r}{F} & \arrow[bend left=20, dashed]{l}{S} \scrP \arrow{r}& 0 
        \end{tikzcd},
    \end{equation}
    where $J$ is just the canonical inclusion. By Proposition \ref{tvrz_GVBsurjectivesplits}, this sequence splits. There is thus a GVB morphism $S: \scrP \rightarrow \scrS$, such that $F \circ S = \1_{\scrP}$. Consequently, there is a unique morphism of sheaves of $\cifty_{\calM}$-modules $T: \scrS \rightarrow \ker(F)$ satisfying $T \circ J = \1_{\ker(F)}$, and $S \circ F + J \circ T = \1_{\scrS}$. It is an easy exercise to prove that the dashed sequence in (\ref{eq_kerFimFproofSES}) is also exact. There is thus an induced isomorphism of sheaves of $\cifty_{\calM}$-modules:
    \begin{equation}
        \ker(F) \cong \scrS / \im(S).
    \end{equation}
    Since $S$ is a right inverse to $F$, it is fiber-wise injective by Proposition \ref{tvrz_fiberinjsur}. By already proved statement, $\im(S)$ is a subbundle of $\scrS$. Now, the quotient of a GVB by a subbundle is a GVB, see Proposition 5.14. in \cite{Vysoky:2022gm}. This shows that $\ker(F)$ is a GVB. Since $J$ has a left inverse, it is also fiber-wise injective. Hence by already proved statement, $\im(J) \equiv \ker(F)$ is a subbundle of $\scrS$. 

    In ordinary geometry, both \textit{1} and \textit{2} hold whenever the rank of linear map $F_{(m)}$ is constant in $m$. In graded geometry, this is no longer true. Let $\calM$ be a graded manifold over $M = \R$ with two global coordinates $(x,\xi)$, such that $|\xi| = 1$. Let $\scrS = \scrP = \scrX_{\calM}$, see Example \ref{example_VFs}. Let us define a GVB morphism $F: \scrX_{\calM} \rightarrow \scrX_{\calM}$ by establishing a morphism $F_{0}$ of modules of global sections:
    \begin{equation}
        F_{0}(\frac{\partial}{\partial x}) := \xi \frac{\partial}{\partial \xi} , \; \; F_{0}( \frac{\partial}{\partial \xi}) := 0.
    \end{equation}
    Now, the induced fiber map $F_{(x)}$ is zero for each $x \in \R$, so its graded rank is definitely constant in $x$. The sheaf $\im(F)$ is non-trivial, but all its sections have a zero value at all points of $\R$. In particular, there cannot be a local frame for $\scrX_{\calM}$ adapted to $\im(F)$. 
\end{proof}

\section{Algebraic Approach}
\subsection{Free and Projective Graded Modules}
In this subsection, let $A \in \gcas$ be fixed. We will further restrict the choice of $A$ in a moment. For a precise definition of graded $A$-modules used here, see \S 1.4 of \cite{Vysoky:2022gm}.
\begin{definition}
    Let $P$ and $Q$ be graded $A$-modules. A graded linear map $\varphi: P \rightarrow Q$ of degree $|\varphi|$ is called \textbf{$A$-linear}, if 
    \begin{equation}
        \varphi(a \cdot p) = (-1)^{|a||\varphi|} a \cdot \varphi(p).
    \end{equation}
    If $|\varphi| = 0$, it is called a \textbf{morphism of graded $A$-modules}. Graded $A$-modules together with morphisms of graded $A$-modules form the \textbf{category of graded $A$-modules}. 
\end{definition}
As in previous sections, we will mostly omit the adjective ``graded'' to clean up the writing. 
\begin{definition}\label{def_frame}
Let $P$ be an $A$-module. We say that $B = \{ b_{\lambda} \}_{\lambda \in I}$ is a \textbf{frame for $P$}, if each element $p \in P$ can be written as $p = a^{\lambda} \cdot b_{\lambda}$ for unique $a^{\lambda} \in A$, where only finitely many of them are non-zero. We say that $P$ is a \textbf{free $A$-module}, if it has a frame.

We say that a free $A$-module is \textbf{finitely generated}, if it has a \textit{finite} frame.
\end{definition}
\begin{remark}
    We choose to use the term ``frame'' instead of the more usual ``basis''. Since all $A$-modules are also graded vector spaces, we do this to avoid the possible confusion.
\end{remark}
\begin{example}
    Let $K \in \gvec$. Set $A[K] = A \otimes_{\R} K$ and let $a \cdot (b \otimes k) := (a \cdot b) \otimes k$. This makes $A[K]$ into  an $A$-module. Let $\{ h_{\lambda} \}_{\lambda \in I}$ be some (possibly infinite) basis of $K$. Then $\{ 1 \otimes h_{\lambda} \}_{\lambda \in I}$ is a frame for $A[K]$. Hence $A[K]$ is a free $A$-module.
\end{example}
It turns out that this is a prototypical example of a free $A$-module.
\begin{proposition} \label{prop_freeisAK}
    An $A$-module $P$ is free, iff it is isomorphic to $A[K]$ for some $K \in \gvec$.
\end{proposition}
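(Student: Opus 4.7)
The proof splits cleanly into the two implications; the ``if'' direction is just the preceding example, so the substance lies in the ``only if'' direction.

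For the easy implication, suppose $P \cong A[K]$ for some $K \in \gvec$. The isomorphism carries any frame of $A[K]$ to a frame of $P$, and the example immediately before the proposition already produces a frame $\{1 \otimes h_{\lambda}\}_{\lambda \in I}$ of $A[K]$ obtained from a homogeneous basis $\{h_{\lambda}\}_{\lambda \in I}$ of $K$. Hence $P$ is free.

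For the nontrivial implication, suppose $P$ is free with frame $B = \{b_{\lambda}\}_{\lambda \in I}$. The plan is to manufacture $K$ tautologically out of $B$ and check that the obvious map is an isomorphism. Let $K \in \gvec$ be the graded vector space with a homogeneous basis $\{h_{\lambda}\}_{\lambda \in I}$, where we declare $|h_{\lambda}| := |b_{\lambda}|$ (which makes sense because each $b_{\lambda}$ is by definition a homogeneous element of the graded module $P$). Define
\begin{equation}
    \varphi: A[K] \rightarrow P, \quad \varphi(1 \otimes h_{\lambda}) := b_{\lambda},
\end{equation}
and extend by $A$-linearity; since $\{1 \otimes h_{\lambda}\}_{\lambda \in I}$ is a frame of $A[K]$, this prescription determines a unique $A$-linear map of degree zero.

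It then remains to verify that $\varphi$ is an isomorphism of graded $A$-modules, and this is a direct unwinding of the definition of a frame. Surjectivity: given any $p \in P$, the frame property yields $a^{\lambda} \in A$ with $p = a^{\lambda} \cdot b_{\lambda}$ (finite sum), so $p = \varphi(a^{\lambda} \cdot (1 \otimes h_{\lambda}))$. Injectivity: if $\varphi(a^{\lambda} \cdot (1 \otimes h_{\lambda})) = 0$, then $a^{\lambda} \cdot b_{\lambda} = 0$ in $P$; uniqueness in the definition of a frame forces every $a^{\lambda}$ to vanish, so the preimage is already $0$ in $A[K]$. No step presents a serious obstacle — the only point one should be mildly careful about is that the degree assignment $|h_{\lambda}| := |b_{\lambda}|$ is what makes $\varphi$ a degree-zero morphism, and that the frame property on both sides is precisely what yields well-definedness, surjectivity, and injectivity in a single stroke.
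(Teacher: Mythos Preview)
Your proof is correct and is precisely the standard argument one would expect; the paper in fact omits the proof of this proposition entirely, treating it as evident from the definitions and the preceding example. Your construction of $K$ as the free graded vector space on symbols $h_{\lambda}$ with $|h_{\lambda}| := |b_{\lambda}|$ and the verification that $\varphi$ is a degree-zero isomorphism is exactly the canonical way to fill in the details.
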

\begin{remark}\label{rmk_invariantRankProperty}
    Let $B \subseteq P$ be a frame for $P$. By a graded cardinality of $B$, we mean the sequence $(\# B_{j})_{j \in \Z}$. We say that $A$ has an \textbf{invariant graded rank property}, if for any given free $A$-module $P$, all its frames have the same graded cardinality. One can show that equivalently, $A[K] \cong A[K']$ implies $K \cong K'$. Hence for $A$ with an invariant graded rank property, every free $A$-module is isomorphic to $A[K]$ for $K \in \gvec$ unique up to an isomorphism. In fact, the graded cardinality of any its frame is the same as a (graded) dimension of $K$. Consequently, $P$ is finitely generated, if $P \cong A[K]$ for a finite-dimensional $K$. 
    
    For any free $A$-module $P \cong A[K]$, we can define its \textbf{graded rank} $\grk(P) := \gdim(K) \equiv ( \dim K_{j})_{j \in \Z}$. Note that $P$ is finitely generated, iff it has a finite graded rank. We will henceforth assume that $A$ has an invariant graded rank property
\end{remark}
\begin{proposition}\label{prop_invariantRankProperty}
Let $\calM$ be a graded manifold. Then $\cifty_\calM(M)$ has the invariant graded rank property.
\end{proposition}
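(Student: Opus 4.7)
The plan is to prove the equivalent reformulation given in Remark \ref{rmk_invariantRankProperty}: for $A := \cifty_\calM(M)$, if $A[K] \cong A[K']$ as graded $A$-modules for some $K, K' \in \gvec$, then $K \cong K'$ as graded vector spaces. Together with Proposition \ref{prop_freeisAK}, this is enough to conclude that any two frames of a free $A$-module have the same graded cardinality.

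The strategy is to extract $K$ from $A[K]$ by reducing modulo a suitable maximal graded ideal. Fix any point $m \in M$ (which exists since $M$ is nonempty by our standing assumptions) and consider the graded ideal
\begin{equation}
    \calJ_m = \{ f \in A \mid f(m) = 0 \} \subseteq A,
\end{equation}
already appearing in Lemma \ref{lem_fiber}. Because the body of any homogeneous graded function of nonzero degree vanishes identically, one has $A_j \subseteq \calJ_m$ for every $j \neq 0$, while $(\calJ_m)_0$ is the usual maximal ideal of smooth functions on $M$ that vanish at $m$. Hence $A / \calJ_m \cong \R$ as graded algebras, concentrated in degree zero.

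Now I would compute $A[K] / (\calJ_m \cdot A[K])$ directly. Writing $A[K] = A \otimes_\R K$, one checks that the submodule $\calJ_m \cdot A[K]$ coincides with $\calJ_m \otimes_\R K$: the inclusion $\supseteq$ is immediate, and conversely any $f \cdot (g \otimes k) = (fg) \otimes k$ with $f \in \calJ_m$ lies in $\calJ_m \otimes_\R K$ since $\calJ_m$ is an ideal. Working in degree $j$, the quotient equals $\bigoplus_i (A_{j-i} / (\calJ_m)_{j-i}) \otimes K_i$, and by the description of $\calJ_m$ above this summand is $K_j$ for $i = j$ and $0$ otherwise. Thus there is a canonical isomorphism of graded vector spaces $A[K] / (\calJ_m \cdot A[K]) \cong K$.

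Finally, any isomorphism $\varphi : A[K] \to A[K']$ of graded $A$-modules (of degree zero) sends $\calJ_m \cdot A[K]$ onto $\calJ_m \cdot A[K']$, so it descends to an isomorphism of the quotients, yielding $K \cong K'$. There is no essential obstacle here: the entire argument hinges on selecting the correct ideal $\calJ_m$, and the remaining work is a routine bookkeeping in the quotient computation.
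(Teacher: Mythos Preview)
Your proof is correct and follows essentially the same approach as the paper: both reduce modulo the ideal $\calJ_m$ of functions vanishing at a chosen point $m \in M$, use that $A/\calJ_m \cong \R$, and identify the resulting quotient of $A[K]$ with $K$ itself. Your version is slightly more explicit in verifying $\calJ_m \cdot A[K] = \calJ_m \otimes_\R K$ and in the degree-by-degree computation of the quotient, whereas the paper packages this into the general isomorphism $A[K]/(J \otimes_\R K) \cong (A/J)[K]$ and then specializes.
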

\begin{proof}
    Let $A \in \gcas$ and $K \in \gvec$ be arbitrary. Let $J \subseteq A$ be any ideal. Then there is a canonical isomorphism $A[K]/(J \otimes_{\R} K) \cong (A/J)[K]$. 

    Now, any $A$-module isomorphism $\varphi: A[K] \rightarrow A[K']$ satisfies $\varphi(J \otimes_{\R} K) \subseteq J \otimes_{\R} K'$, hence it induces an isomorphism $\hat{\varphi}: (A/J)[K] \rightarrow (A/J)[K']$.

    Let $A = \cifty_\calM(M)$ and for any given $a \in M$, consider the ideal 
    \begin{equation}
        J := \{ f \in \cifty_{\calM}(M) \; | \; f(a) = 0 \}.
    \end{equation}
    Clearly $A/J \cong \R$ and since $\R[K] \cong K$, we find an isomorphism $\hat{\varphi}: K \rightarrow K'$.
\end{proof}

\begin{definition}
Let $P$ be an $A$-module. We say that $S = \{ s_{\lambda} \}_{\lambda \in I} \subseteq P$ is a \textbf{generating set for $P$}, if every element $p \in P$ can be written as $p = a^{\lambda} \cdot s_{\lambda}$ for \textit{some} $a^{\lambda} \in A$, where only finitely many of them are non-zero. 

We say that $P$ is a \textbf{finitely generated $A$-module}, if it has a \textit{finite} generating set.
\end{definition}
\begin{remark} \label{rem_aboutfrees} Let $P$ be an $A$-module. 
\begin{enumerate}
    \item There always exists some generating set for $P$ since one can take $S = P$.
    \item $P$ is free, iff we can choose an $A$-linearly independent generating subset $B$ for $P$.
    \item If $P$ is free, it has a finite generating set $S$, iff it has a finite frame $B$. This means that both notions of ``finitely generated'' coincide for free $A$-modules. Indeed, if $S = \{s_{1}, \dots, s_{n} \}$ is a finite generating set and $B$ is an arbitrary frame, one can always find a finite subset $B' \subseteq B$ which generates $P$. Indeed, every $s_{i}$ can written as a finite $A$-linear combination of some elements of $B$, so one can consider a finite set $B'_{i}$ all elements of $B$ which appear in this combination, and let $B' := \bigcup_{i=1}^{n} B'_{i}$. The converse statement is clear. 
\end{enumerate}
\end{remark}

\begin{definition}
We say that  an $A$-module $P$ is \textbf{projective}, if for any $\psi: P \rightarrow M$ and any \textit{surjective} $\varphi: Q \rightarrow M$, there exists $\ol{\psi}: P \rightarrow Q$ fitting into the commutative diagram
\begin{equation}
    \begin{tikzcd}
    & Q \arrow{d}{\varphi}\\
    P \arrow{r}{\psi} \arrow[dashed]{ur}{\ol{\psi}}& M
    \end{tikzcd}.
\end{equation}
All involved maps are assumed to be $A$-linear. $\ol{\psi}$ is called a \textbf{lift of $\psi$ along $\varphi$}. 
\end{definition}
There are several equivalent ways how to define projective $A$-modules. Let us formulate this as a proposition. We omit its proof, since it is completely analogous to the one for modules over commutative rings. 
\begin{proposition} \label{tvrz_projective}
    Let $P$ be a graded $A$-module. Then the following statements are equivalent:
    \begin{enumerate}
        \item $P$ is projective;
        \item For any surjective $A$-linear map $\varphi: Q \rightarrow P$, there exists an $A$-linear map $\sigma: P \rightarrow Q$ satisfying 
        \begin{equation}
        \varphi \circ \sigma = \1_{P}.
        \end{equation}
        \item $P$ is isomorphic to a direct summand of a free $A$-module.
        \item The functor $\ul{\Hom}(P,-)$ from the category of graded $A$-modules to itself is exact, where $\ul{\Hom}(P,Q)$ denotes the $A$-module of $A$-linear maps from $P$ to $Q$. 
    \end{enumerate}
\end{proposition}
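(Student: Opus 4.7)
The plan is to establish the cycle \textit{1} $\Rightarrow$ \textit{2} $\Rightarrow$ \textit{3} $\Rightarrow$ \textit{1} and then to handle \textit{1} $\Leftrightarrow$ \textit{4} separately. The arguments mirror the classical ones for modules over ordinary commutative rings; the only additional care needed for the $\Z$-graded setting is that every map and every chosen section respects the grading.

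For \textit{1} $\Rightarrow$ \textit{2} one specializes the lifting diagram to $\psi := \1_{P}$ and reads off the section directly. For \textit{2} $\Rightarrow$ \textit{3} I would build a free $A$-module surjecting onto $P$ by the standard construction: take $K \in \gvec$ with a homogeneous basis $\{ e_{p} \}$ indexed by the homogeneous elements $p \in P$ and $|e_{p}| := |p|$, set $F := A[K]$, and let $\pi: F \to P$ be the degree zero $A$-linear map determined by $1 \otimes e_{p} \mapsto p$. By \textit{2}, $\pi$ admits a section $\sigma: P \to F$, and from $\pi \circ \sigma = \1_{P}$ one obtains the splitting $F = \im(\sigma) \oplus \ker(\pi)$, exhibiting $P \cong \im(\sigma)$ as a direct summand of a free module.

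For \textit{3} $\Rightarrow$ \textit{1}, assume $F = P \oplus P'$ with $F$ free on a frame $\{ b_{\lambda} \}_{\lambda \in I}$ and consider a lifting problem with $\psi: P \to M$ and a surjection $\varphi: Q \to M$. Extend $\psi$ to $\hat{\psi}: F \to M$ by declaring $\hat{\psi}|_{P'} := 0$; for each $\lambda \in I$ select $q_{\lambda} \in Q$ of degree $|b_{\lambda}|$ satisfying $\varphi(q_{\lambda}) = \hat{\psi}(b_{\lambda})$, and define $\tilde{\psi}: F \to Q$ as the unique $A$-linear extension of $b_{\lambda} \mapsto q_{\lambda}$, which is well-defined precisely because $\{ b_{\lambda} \}$ is a frame. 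Then $\varphi \circ \tilde{\psi} = \hat{\psi}$ holds on generators and hence everywhere, so $\ol{\psi} := \tilde{\psi}|_{P}$ lifts $\psi$ along $\varphi$. For \textit{1} $\Leftrightarrow$ \textit{4}, I would note that $\ul{\Hom}(P, -)$ is always left exact on graded $A$-modules (a mono composed with any $f: P \to M'$ vanishes only if $f$ does, and the kernel of $\ul{\Hom}(P, \varphi)$ is visibly $\ul{\Hom}(P, \ker \varphi)$), so exactness reduces to preservation of epimorphisms, which is literally the lifting property stated in \textit{1}.

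The only real obstacle is the degree bookkeeping in \textit{3} $\Rightarrow$ \textit{1}: one needs that a degree zero surjection $\varphi: Q \to M$ is surjective on each homogeneous component, so that a lift $q_{\lambda}$ can actually be chosen in degree $|b_{\lambda}|$. This is automatic, because if $\varphi(q) = m$ with $m$ homogeneous of degree $k$, the homogeneous component $q_{k}$ of $q$ alone satisfies $\varphi(q_{k}) = m$. All remaining steps are routine translations of the classical module-theoretic argument.
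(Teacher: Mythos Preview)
Your proposal is correct and is precisely the standard argument the paper has in mind: the paper explicitly omits the proof, stating only that it is ``completely analogous to the one for modules over commutative rings,'' and what you have written is exactly that classical cycle \textit{1} $\Rightarrow$ \textit{2} $\Rightarrow$ \textit{3} $\Rightarrow$ \textit{1} together with the left-exactness-plus-lifting argument for \textit{1} $\Leftrightarrow$ \textit{4}. One small remark: in the paper's conventions a graded vector space has no inhomogeneous elements at all, so a degree zero surjection is automatically surjective component-wise and your final paragraph's worry about extracting a homogeneous lift $q_{\lambda}$ is moot rather than an obstacle; similarly, for \textit{1} $\Rightarrow$ \textit{4} one should note (or silently use) that a degree $k$ map $P \to M$ is the same as a degree zero map $P \to M[k]$, so the degree zero lifting property in \textit{1} really does yield surjectivity of $\ul{\Hom}(P,\varphi)$ in every degree.
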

\begin{corollary}
Every free $A$-module is projective. 
\end{corollary}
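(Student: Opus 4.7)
The plan is to invoke Proposition \ref{tvrz_projective} and observe that condition \textit{3} there holds trivially for a free module: if $P$ is free, then $P \cong P \oplus 0$ exhibits $P$ as a direct summand of a free $A$-module, namely itself. This gives a one-line proof.

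For completeness, I would also sketch the direct verification of the lifting property, which is more informative. Let $P$ be free with a frame $B = \{ b_{\lambda} \}_{\lambda \in I}$. Given $\psi: P \rightarrow M$ and a surjective $A$-linear $\varphi: Q \rightarrow M$ (both of degree zero), I want to construct a lift $\ol{\psi}: P \rightarrow Q$. For each $\lambda \in I$, the element $\psi(b_{\lambda}) \in M$ is homogeneous of degree $|b_{\lambda}|$, so I must select $q_{\lambda} \in Q$ with $|q_{\lambda}| = |b_{\lambda}|$ and $\varphi(q_{\lambda}) = \psi(b_{\lambda})$. Once such $q_{\lambda}$ are chosen, I define
\begin{equation}
    \ol{\psi}( a^{\lambda} \cdot b_{\lambda}) := a^{\lambda} \cdot q_{\lambda},
\end{equation}
which is well-defined by the uniqueness of the frame decomposition, $A$-linear and of degree zero by construction, and satisfies $\varphi \circ \ol{\psi} = \psi$ on the frame and hence on all of $P$.

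The only subtlety, and the point I would want to spell out carefully, is the degree-preserving choice of $q_{\lambda}$. Surjectivity of $\varphi$ produces some $\tilde{q} \in Q$ with $\varphi(\tilde{q}) = \psi(b_{\lambda})$, but a priori $\tilde{q}$ need not be homogeneous. Decomposing $\tilde{q} = \sum_{k} \tilde{q}_{k}$ into homogeneous components and applying $\varphi$, which has degree zero, yields $\psi(b_{\lambda}) = \sum_{k} \varphi(\tilde{q}_{k})$ with $\varphi(\tilde{q}_{k}) \in M_{k}$. Since $\psi(b_{\lambda}) \in M_{|b_{\lambda}|}$, comparing homogeneous components forces $\varphi(\tilde{q}_{|b_{\lambda}|}) = \psi(b_{\lambda})$, and I set $q_{\lambda} := \tilde{q}_{|b_{\lambda}|}$. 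This is the only place where the graded structure needs care; everything else reduces to the standard ungraded argument, so I do not anticipate any real obstacle.
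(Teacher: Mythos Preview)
Your one-line argument via condition \textit{3} of Proposition \ref{tvrz_projective} is correct and is exactly what the paper intends: the corollary is stated without proof immediately after that proposition, so it is meant to follow directly from characterization \textit{3}.

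One small remark on your supplementary direct verification: the ``subtlety'' you flag is in fact a non-issue in this paper's conventions. Recall from the introductory section that in the present setup a graded vector space has \emph{no inhomogeneous elements}: by definition every $v \in V$ lies in a single $V_{i}$. A degree-zero graded linear map $\varphi: Q \rightarrow M$ is by definition a family $(\varphi_{i}: Q_{i} \rightarrow M_{i})_{i \in \Z}$, and surjectivity means each $\varphi_{i}$ is surjective. Hence for $\psi(b_{\lambda}) \in M_{|b_{\lambda}|}$ one can directly choose $q_{\lambda} \in Q_{|b_{\lambda}|}$ with $\varphi(q_{\lambda}) = \psi(b_{\lambda})$; there is no need to decompose a preimage into homogeneous components, because every preimage is already homogeneous. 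Your argument is not wrong, just written for a more permissive convention (graded objects as direct sums with inhomogeneous elements allowed) than the one used here.
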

Let us summarize some convenient properties of projective $A$-modules.
\begin{proposition} \label{prop_projectiveprops}
\begin{enumerate}
    \item Let $P$ be a projective $A$-module. Then it is finitely-generated, iff it is isomorphic to a direct summand of a finitely generated free $A$-module.
    \item For any $A$-module $P$, its \textbf{dual $A$-module} is defined as $P^{\star} := \ul{\Hom}(P,A)$. If $P$ is finitely generated and projective, then so is $P^{\star}$. 
    \item If $P$ is finitely generated and projective, then $P$ is reflexive. In other words, the canonical map $P \mapsto (P^{\star})^{\star}$ is an isomorphism of $A$-modules.
\end{enumerate}
\end{proposition}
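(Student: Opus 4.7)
The plan is to handle the three items in turn, with the bulk of the work in part \textit{1}; parts \textit{2} and \textit{3} then follow by short diagram chases.

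For part \textit{1}, the ($\Leftarrow$) direction is immediate: if $P \oplus Q \cong F$ with $F$ finitely generated free, any finite frame of $F$ projects onto a finite generating set for $P$. For ($\Rightarrow$), let $\{p_1, \dots, p_n\}$ be a finite generating set of the projective $A$-module $P$. Form the finite-dimensional graded vector space $K$ with a homogeneous basis $\{e_i\}_{i=1}^n$ satisfying $|e_i| = |p_i|$, and consider the free module $A[K]$. Define $\varphi: A[K] \to P$ on the frame by $\varphi(1 \otimes e_i) := p_i$ and extend by $A$-linearity of degree zero; by construction $\varphi$ is surjective. Projectivity and Proposition \ref{tvrz_projective} supply an $A$-linear section $\sigma: P \to A[K]$ with $\varphi \circ \sigma = \1_P$, and then $A[K] \cong \sigma(P) \oplus \ker\varphi \cong P \oplus \ker\varphi$, exhibiting $P$ as a direct summand of a finitely generated free module.

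For part \textit{2}, apply part \textit{1} to write $A[K] \cong P \oplus Q$ for some finite-dimensional $K \in \gvec$. The dualization functor $(-)^{\star} = \ul{\Hom}(-,A)$ turns direct sums into direct sums, and a short computation with the dual frame shows $A[K]^{\star} \cong A[K^{\star}]$, where $K^{\star}$ denotes the graded linear dual of $K$ (finite-dimensional, with the grading flipped in sign). Hence $A[K^{\star}] \cong P^{\star} \oplus Q^{\star}$, so $P^{\star}$ is a direct summand of a finitely generated free $A$-module. Invoking part \textit{1} in the opposite direction, together with the fact that direct summands of free modules are projective (Proposition \ref{tvrz_projective}), yields that $P^{\star}$ is both finitely generated and projective.

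For part \textit{3}, I would first verify reflexivity directly for the finitely generated free modules $F = A[K]$ with $K$ finite-dimensional: picking a homogeneous basis of $K$ and its graded dual basis of $K^{\star}$, the evaluation map $F \to (F^{\star})^{\star}$ sends the canonical frame of $F$ to the bidual frame of $(F^{\star})^{\star}$ (up to a fixed Koszul sign), and is therefore an isomorphism of $A$-modules. For a general finitely generated projective $P$, fix the splitting $A[K] \cong P \oplus Q$ from part \textit{1}. The canonical evaluation map is natural and compatible with direct sums, so the isomorphism $A[K] \to (A[K]^{\star})^{\star}$ decomposes as the direct sum of the canonical maps $P \to (P^{\star})^{\star}$ and $Q \to (Q^{\star})^{\star}$; since the total map is an isomorphism, each summand must be as well. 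The main obstacle is in part \textit{3}: keeping the graded sign conventions in the evaluation map consistent with the $A$-linearity convention used to define $\ul{\Hom}$, and verifying that direct sums of graded $A$-modules commute with duality in the expected way. Once that bookkeeping is settled, the remaining argument is a routine transcription of the ungraded proof.
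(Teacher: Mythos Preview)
Your proof is correct and follows essentially the same line as the paper's: part \textit{1} via a surjection from a free module on the generators split by projectivity, part \textit{2} by dualizing the direct sum decomposition, and part \textit{3} by checking reflexivity on the free module and transporting along the splitting. The only cosmetic difference is that in part \textit{3} the paper packages the transport as a diagram chase on the split short exact sequence $0 \to P \to F \to Q \to 0$ (using that $j_{F}$ is an isomorphism and $j_{Q}$ is injective), whereas you argue directly that $j_{F} \cong j_{P} \oplus j_{Q}$ forces each summand to be an isomorphism; both arguments encode the same additivity of the double-dual natural transformation.
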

\begin{proof}
If $P$ is finitely generated, let $S = \{ s_{1}, \dots, s_{n}\}$ be some its finite generating set. Let $\R(S)$ be the free graded vector space generated by $S$, and let $F := A[\R(S)]$. This is a finitely generated free $A$-module. There is an obvious surjective $A$-linear map $\varphi: F \rightarrow P$. By Proposition \ref{tvrz_projective}, there is an injective $\sigma: P \rightarrow F$ satisfying $\varphi \circ \sigma = \1_{P}$. It follows that $F = \ker \varphi \oplus \sigma(P)$ and we find $P$ isomorphic to a direct summand $\sigma(P)$ of a finitely generated free $A$-module $F$. 

Conversely, if $P$ is isomorphic to a direct summand of a finitely generated free $A$-module $F$, there is a surjective projection $\pi_{P}: F \rightarrow P$. For any finite frame $B$ for $F$, its image $\pi_{P}(B)$ is a finite generating subset for $P$. This proves \textit{1}.

Now, if $P$ is finitely generated and projective, there is a \textit{finitely generated} free $A$-module $F$, such that $F \cong P \oplus Q$ for some $A$-module $Q$, by already proved statement \textit{1}. Since the functor $\ul{\Hom}(\cdot,A)$ preserves direct sums, we have $F^{\star} \cong P^{\star} \oplus Q^{\star}$. It is easy to see that $F^{\star}$ is again finitely generated free. Hence $P^{\star}$ is finitely generated and projective by \textit{1}.

To prove \textit{3}, recall that the canonical $A$-linear map $j_{P}: P \rightarrow (P^{\star})^{\star}$ takes $p \in P$ to an $A$-linear map $[j_{P}(p)](\xi) := (-1)^{|\xi||p|} \xi(p)$, for all $\xi \in P^{\star}$. $j_{P}$ is always injective, bud not necessarily surjective.

Since $P$ is finitely generated and projective, we may assume that there is a finitely generated free graded $A$-module $F$, such that $F = P \oplus Q$. One obtains a diagram
\begin{equation}
    \begin{tikzcd}
    0 \arrow{r} & \arrow{r}{i_{P}} P  \arrow{d}{j_{P}} & F \arrow{r}{\pi_{Q}} \arrow{d}{j_{F}} &  Q \arrow{d}{j_{Q}} \arrow{r} & 0\\
    0 \arrow{r} &  \arrow{r}{i_{P}^{TT}} (P^{\star})^{\star}  & \arrow{r}{\pi_{Q}^{TT}}(F^{\star})^{\star} & (Q^{\star})^{\star} \arrow{r} & 0 
    \end{tikzcd}.
\end{equation}
Both rows are split exact sequences, the lower one is obtained by applying the additive functor $\ul{\Hom}(\cdot,A)$ twice. For example, $i_{P}^{TT}$ is just the ``double transpose'' of the map $i_{P}$. Both squares in this diagram are commutative. Since $F$ is finitely generated and free, the map $j_{F}$ is easily seen to be an isomorphism. By a simple diagram chase together with the injectivity of $j_{Q}$, one can now easily prove that $j_{P}$ is surjective. 
\end{proof}
\begin{remark}
    Note that \textit{2} and \textit{3} are \textit{not true}, if we remove the ``finitely generated'' assumption.
\end{remark}
Let us mention one important feature of projective modules.
\begin{lemma} \label{lem_tensorproductprojective}
    Let $P$ and $Q$ be projective $A$-modules. 
    
    Then their tensor product $P \otimes_{A} Q$ is projective. 
\end{lemma}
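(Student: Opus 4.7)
The plan is to use characterization \textit{3} in Proposition \ref{tvrz_projective}, namely that an $A$-module is projective iff it is isomorphic to a direct summand of a free $A$-module. So I would start by choosing free $A$-modules $F, G$ together with $A$-modules $P', Q'$ such that $F \cong P \oplus P'$ and $G \cong Q \oplus Q'$.

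Next, I would invoke the distributivity of the tensor product of graded $A$-modules over direct sums, which is proved exactly as in the ungraded case (both sides satisfy the same universal property with respect to $A$-bilinear maps). This yields a canonical isomorphism
\begin{equation}
F \otimes_{A} G \cong (P \otimes_{A} Q) \oplus (P \otimes_{A} Q') \oplus (P' \otimes_{A} Q) \oplus (P' \otimes_{A} Q'),
\end{equation}
so that $P \otimes_{A} Q$ is exhibited as a direct summand of $F \otimes_{A} G$. It then suffices, using characterization \textit{3} of Proposition \ref{tvrz_projective} once more, to show that $F \otimes_{A} G$ is itself free.

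For this last step, I would use Proposition \ref{prop_freeisAK} to write $F \cong A[K]$ and $G \cong A[L]$ for some graded vector spaces $K, L$, and then construct a canonical isomorphism $A[K] \otimes_{A} A[L] \cong A[K \otimes_{\R} L]$. Concretely, one defines the forward map on generators by $(a \otimes k) \otimes_{A} (b \otimes l) \mapsto (-1)^{|b||k|} (a \cdot b) \otimes (k \otimes l)$, with the inverse sending $a \otimes (k \otimes l)$ to $(a \otimes k) \otimes_{A} (1 \otimes l)$; both are well-defined by the universal properties of $\otimes_{\R}$ and $\otimes_{A}$. Since $K \otimes_{\R} L$ is a graded vector space, $A[K \otimes_{\R} L]$ is free, and the claim follows.

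The main obstacle in executing this is purely bookkeeping: one must keep track of Koszul signs when verifying that the map $A[K] \otimes_{A} A[L] \to A[K \otimes_{\R} L]$ respects the $A$-balancing condition $(a \otimes k) \cdot c \otimes_{A} (b \otimes l) = (a \otimes k) \otimes_{A} c \cdot (b \otimes l)$. This amounts to checking that the sign $(-1)^{|b||k|}$ is compatible with moving $c$ across $k$, which is a routine computation that I would not spell out in detail.
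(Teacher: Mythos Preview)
Your proof is correct but takes a different route from the paper. The paper invokes characterization \textit{4} of Proposition \ref{tvrz_projective} (exactness of $\ul{\Hom}(P,-)$) together with the tensor--hom adjunction $\ul{\Hom}(P \otimes_{A} Q, R) \cong \ul{\Hom}(P, \ul{\Hom}(Q,R))$, so that $\ul{\Hom}(P \otimes_{A} Q, -)$ becomes a composite of two exact functors and is therefore exact. This is slicker and sidesteps all Koszul sign bookkeeping. Your approach via characterization \textit{3} is more hands-on and, as a byproduct, spells out the isomorphism $A[K] \otimes_{A} A[L] \cong A[K \otimes_{\R} L]$ explicitly --- which is precisely what the paper later invokes without proof in Lemma \ref{lem_sfffortrivial}. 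So your argument does a little extra work here, but that work is reused downstream.
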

\begin{proof}
    Let $R$ be an arbitrary $A$-module. There is a canonical isomorphism
    \begin{equation}
        \ul{\Hom}( P \otimes_{A} Q, R) \cong \ul{\Hom}(P, \ul{\Hom}(Q,R)),
    \end{equation}
    natural in $R$. This observation and Proposition \ref{tvrz_projective} imply that the functor $\ul{\Hom}(P \otimes_{A} Q, -)$ is naturally isomorphic to the composition $\ul{\Hom}(P,-) \circ \ul{\Hom}(Q,-)$ of two exact functors. Hence it is itself exact and $P \otimes_{A} Q$ is projective by Proposition \ref{tvrz_projective}. 
\end{proof}
\subsection{Definition and The Functor}
Let us start by defining the suitable category of \textit{algebraic graded vector bundles}, which we shall for the purposes of this paper denote as $\avbun$. It will contain algebraic GVBs over all graded manifolds. This will require some work on the level of their morphisms.

\begin{definition}[Category $\avbun$]\label{def_avbun}
\begin{itemize}
    \item Objects are pairs $(\calM,P)$, where $\calM$ is a graded manifold and $P$ is a \textit{finitely generated projective} $\cifty_\calM(M)$-module.
    \item Let $(\calM,P)$ and $(\calN,Q)$ be two objects. A morphism $(\calM,P) \rightarrow (\calN,Q)$ is a pair  $(\varphi, \Lambda_{0})$, where $\varphi: \calM \rightarrow \calN$ is a graded smooth map and $\Lambda_{0}: Q^{\star} \rightarrow P^{\star}$ is a degree zero $\R$-linear map satisfying the condition
    \begin{equation}
        \Lambda_{0}(g \cdot \xi) = \varphi^{\ast}(g) \cdot \Lambda_{0}(\xi),
    \end{equation}
    for all $\xi \in Q^{\star}$ and $g \in \cifty_{\calN}(N)$. 
    
    In other words, a graded algebra morphism $\varphi^{\ast}: \cifty_{\calN}(N) \rightarrow \cifty_{\calM}(M)$ induces a graded $\cifty_{\calN}(N)$-module structure on $P^{\star}$ and we require $\Lambda_{0}$ to be $\cifty_{\calN}(N)$-linear. Compositions and identity morphisms are defined in an obvious way. The resulting category is denoted as $\avbun$. 
\end{itemize}

\end{definition}

\begin{remark}
    There are some remarks in order.
    \begin{enumerate}
        \item Suppose $(\1_{\calM},\Lambda_{0}): (\calM,P) \rightarrow (\calM,Q)$. Then $\Lambda_{0}: Q^{\star} \rightarrow P^{\star}$ is always just the transpose of a unique morphism $F_{0}: P \rightarrow Q$ of $\cifty_{\calM}(M)$-modules. 
        \item $(\varphi,\Lambda_{0})$ is an isomorphism of $(\calM,P)$ and $(\calN,Q)$ in $\avbun$, iff $\varphi: \calM \rightarrow \calN$ is a graded diffeomorphism and $\Lambda_{0}: Q^{\star} \rightarrow P^{\star}$ is a $\cifty_{\calN}(N)$-module isomorphism with respect to the $\cifty_{\calN}(N)$-module structure induced on $P^{\star}$ by $\varphi^{\ast}$, see above. 
    \end{enumerate}
\end{remark}

We will now prove a sequence of lemmas aiming to the main goal - showing that the module of global sections of a sheaf-defined GVB is an example of finitely generated projective $\cifty_{\calM}(M)$-module. First, we  need to show that every graded vector bundle can be ``trivialized'' on a \textit{finite} open cover of the underlying manifold of its base graded manifold. 
\begin{lemma} \label{lem_finitetrivial}
Let $\scrS \in \svbun$ be a sheaf-defined GVB over a graded manifold $\calM$. Let $n_{0} := \dim(M)$ be the dimension of the manifold $M$ underlying $\calM$.

Then there exists a finite open cover $\{ U_{k} \}_{k=0}^{n_{0}}$, such that $\scrS|_{U_{k}}$ is freely and finitely generated. 
\end{lemma}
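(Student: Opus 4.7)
The plan is to combine the local trivializability of $\scrS$ with a classical refinement result from the dimension theory of topological spaces. Since $\scrS$ is locally freely and finitely generated, start from some open cover $\{V_{\alpha}\}_{\alpha \in I}$ of $M$ on which $\scrS$ restricts to freely and finitely generated sheaves of $\cifty_{\calM}$-modules. By constancy of the graded rank, every such local frame has the same graded cardinality $(r_{j})_{j \in \Z}$; in particular the total number $r := \sum_{j} r_{j}$ of generators and their degrees are uniform over $\alpha$.

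The key tool is the following classical statement (a consequence of Ostrand's theorem in dimension theory): every open cover of a paracompact Hausdorff space of covering dimension at most $n$ admits an open refinement $\{W_{\gamma}\}_{\gamma \in J}$ together with a partition $J = J_{0} \sqcup J_{1} \sqcup \cdots \sqcup J_{n}$ such that for each $k$ the family $\{W_{\gamma}\}_{\gamma \in J_{k}}$ consists of pairwise disjoint open sets. Applied to our $\{V_{\alpha}\}$ on the smooth $n_{0}$-manifold $M$, which has covering dimension $n_{0}$, this yields such a refinement with $n_{0}+1$ color classes.

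Define $U_{k} := \bigcup_{\gamma \in J_{k}} W_{\gamma}$ for $k \in \{0,1,\dots,n_{0}\}$. Then $\{U_{k}\}_{k=0}^{n_{0}}$ is an open cover of $M$ of size $n_{0}+1$. To check that $\scrS|_{U_{k}}$ is freely and finitely generated, fix $k$ and for each $\gamma \in J_{k}$ choose a local frame $(s_{1}^{(\gamma)}, \dots, s_{r}^{(\gamma)})$ for $\scrS|_{W_{\gamma}}$, arranged so that the degree sequence $(|s_{i}^{(\gamma)}|)_{i=1}^{r}$ is independent of $\gamma$ (possible by the constancy of the graded rank). Since $\{W_{\gamma}\}_{\gamma \in J_{k}}$ are pairwise disjoint, the sheaf axiom for $\scrS$ uniquely produces global sections $s_{i} \in \scrS(U_{k})$ with $s_{i}|_{W_{\gamma}} = s_{i}^{(\gamma)}$ for every $i$ and $\gamma \in J_{k}$. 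Given any $s \in \scrS(U_{k})$, the unique expansions $s|_{W_{\gamma}} = f^{i}_{\gamma} \cdot s_{i}^{(\gamma)}$ glue (again by disjointness and the sheaf axiom applied to $\cifty_{\calM}$) to unique $f^{i} \in \cifty_{\calM}(U_{k})$, and one verifies $s = f^{i} \cdot s_{i}$ by checking restrictions to each $W_{\gamma}$. Uniqueness of the $f^{i}$ is inherited from the local uniqueness of the $f^{i}_{\gamma}$.

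The main obstacle is the topological refinement step; once it is invoked as a black box from dimension theory (covering dimension of a smooth $n_{0}$-manifold is $n_{0}$), every remaining step is routine sheaf-theoretic bookkeeping enabled by the within-color-class disjointness of the $W_{\gamma}$.
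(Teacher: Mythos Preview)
Your argument is correct and follows the same overall strategy as the paper: produce $n_{0}+1$ open sets, each a disjoint union of trivializing pieces, and then glue the local frames using the sheaf axiom. The difference lies only in how the ``coloring'' of the refinement is obtained. You invoke Ostrand's theorem from dimension theory as a black box, whereas the paper (following Walschap) carries out the construction explicitly: starting from a partition of unity $\{\rho_{\alpha}\}$ subordinate to a trivializing cover of order $n_{0}+1$, it defines for each $(k+1)$-element subset $a = \{\alpha_{0},\dots,\alpha_{k}\}$ of the index set the open set
\[
W_{a}^{(k)} := \{ m \in M \mid \rho_{\alpha}(m) < \min\{\rho_{\alpha_{0}}(m),\dots,\rho_{\alpha_{k}}(m)\} \text{ for all } \alpha \notin a \},
\]
shows that the $W_{a}^{(k)}$ for fixed $k$ are pairwise disjoint and sit inside the trivializing sets, and puts $U_{k} := \bigcup_{a} W_{a}^{(k)}$. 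Your route is shorter and cleaner provided one accepts the dimension-theory input; the paper's route is more self-contained, relying only on the partition-of-unity machinery already in use throughout. Either way, the final gluing step over the disjoint pieces is identical to what you wrote.
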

\begin{proof}
    The proof is a modification of the one of Lemma 7.1 in Chapter 2, \S 7 of \cite{walschap2012metric}. Since $\scrS$ is a locally freely and finitely generated sheaf of $\cifty_{\calM}$-modules, there is an open cover $\{ U_{\alpha} \}_{\alpha \in I}$ of $M$, such that $\scrS|_{U_{\alpha}}$ is freely and finitely generated. Without the loss of generality, we may assume that it is of order $n_{0} + 1$, that is each $m \in M$ lies in at most $n_{0} + 1$ its distinct open sets. This is a standard result of differential topology, see \S 50 of \cite{munkres2000topology}. Let $\{ \rho_{\alpha} \}_{\alpha \in I}$ be a partition of unity subordinate to this open cover. 

    Now, for a given $k \in \N_{0}$, let us consider a set $\calI_{k}$ of all subsets of $I$ consisting of $k+1$ elements. For any $a = \{ \alpha_{0}, \dots, \alpha_{k} \} \in \calI_{k}$, consider a subset
    \begin{equation} \label{eq_Wa(k)set}
        W_{a}^{(k)} := \{ m \in M \mid \rho_{\alpha}(m) < \min \{ \rho_{\alpha_{0}}(m), \dots, \rho_{\alpha_{k}}(m) \} \text{ for all } \alpha \notin a \}.
    \end{equation}
    We shall now prove some its properties. Let $k \in \N_{0}$ be fixed. Then
    \begin{enumerate}[(i)]
        \item $W_{a}^{(k)}$ is open for each $a \in \calI_{k}$;
        \item $W_{a}^{(k)} \cap W_{a'}^{(k)} = \emptyset$ whenever $a \neq a'$;
        \item $\scrS|_{W_{a}^{(k)}}$ is freely and finitely generated. 
    \end{enumerate}
    To see (i), let $m_{0} \in W_{a}^{(k)}$ be fixed. Since $\{ \supp \rho_{\alpha} \}_{\alpha \in I}$ is locally finite, there exists an open subset $U \in \Op_{m_{0}}(M)$, such that $U \cap \supp(\rho_{\alpha}) \ne \emptyset$ only for $\alpha$ in some finite subset $I_{0} \subseteq I$. This means that for $m \in U$, the condition in (\ref{eq_Wa(k)set}) has to be tested only for $\alpha \in I_{0}$. For each $\alpha \in I_{0} \ssm a$, one has $\rho_{\alpha}(m_{0}) < \min \{ \rho_{\alpha_{0}}(m_{0}), \dots, \rho_{\alpha_{k}}(m_{0}) \}$. This inequality is true also for all $m$ in some open neighborhood $V_{\alpha} \in \Op_{m_{0}}(U)$. By taking the intersection
    \begin{equation}
    \bigcap_{\alpha \in I_{0} \ssm a} V_{\alpha}
    \end{equation}
    we find a neighborhood of $m_{0}$ contained in $W_{a}^{(k)}$. Hence $W_{a}^{(k)}$ is open. 

    To prove (ii), let $a \neq a'$ be two elements of $\calI_{k}$. Since both sets have $k + 1$ elements, there is some $\alpha \in a$ not in $a'$ and some $\alpha' \in a'$ not in $a$. But then every $m \in W_{a}^{(k)} \cap W_{a'}^{(k)}$ would satisfy both $\rho_{\alpha}(m) < \rho_{\alpha'}(m)$ and $\rho_{\alpha'}(m) < \rho_{\alpha}(m)$, which is absurd. 

    It remains to prove (iii). Let $a := \{ \alpha_{0}, \dots, \alpha_{k} \}$. We claim that 
    \begin{equation}
        W_{a}^{(k)} \subseteq \bigcap_{j=0}^{k} \supp(\rho_{\alpha_{j}}) \subseteq \bigcap_{j=0}^{k} U_{\alpha_{j}}.
    \end{equation}
    The statement (iii) would then follow immediately from the fact that $\scrS|_{U_{\alpha}}$ is freely and finitely generated for each $\alpha \in I$. Now, suppose that there is $j \in \{0,\dots,k\}$, such that $m \notin \supp(\rho_{\alpha_{j}})$. But then $\rho_{\alpha}(m) \geq \rho_{\alpha_{j}}(m) = 0$ for all $\alpha \in I$ and thus $m \notin W_{a}^{(k)}$. This proves the first inclusion, the second one follows from the definition of partitions of unity.

    Next, for each $k \in \{0,\dots,n_{0}\}$, set
    \begin{equation}
        U_{k} := \bigcup_{a \in \calI_{k}} W_{a}^{(k)}.
    \end{equation}
    First, let us argue that $\scrS|_{U_{k}}$ is freely and finitely generated. 
    
    For each $a \in \calI_{k}$, we have a local frame $s_{1}^{(a)}, \dots, s_{r}^{(a)}$ for $\scrS$ over $W_{a}^{(k)}$. This follows from (iii). From (ii) and the fact that $\scrS$ is a sheaf, it follows that there is a unique collection $s_{1}, \dots, s_{r} \subseteq \scrS(U_{k})$, such that $s_{i}|_{W_{a}^{(k)}} = s_{i}^{(a)}$ for all $i \in \{1, \dots, r \}$ and $a \in \calI_{k}$. It is easy to see that this is a local frame for $\scrS$ over $U_{k}$, that is $\scrS|_{U_{k}}$ is freely and finitely generated. 

    It remains to prove that $\{ U_{k} \}_{k=0}^{n_{0}}$ covers $M$. Let $m \in M$. By the first paragraph of the proof, $m$ lies in at most $n_{0} + 1$ open sets of the open cover $\{ U_{\alpha} \}_{\alpha \in I}$. There is thus $k \in \{0, \dots, n_{0} \}$ and $a \in \calI_{k}$, such that $\rho_{\alpha}(m) > 0$ for all $\alpha \in a$ and $\rho_{\alpha}(m) = 0$ for all $\alpha \notin a$. But this means that $m \in W_{a}^{(k)} \subseteq U_{k}$. 
\end{proof}

\begin{lemma} \label{lem_sectionsarefingen}
Let $\scrS \in \svbun$ be a sheaf-defined GVB over a graded manifold $\calM$. 

Then $\scrS(M)$ is a finitely generated $\cifty_{\calM}(M)$-module. 
\end{lemma}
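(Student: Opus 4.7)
The strategy is to leverage Lemma \ref{lem_finitetrivial} to reduce to a finite collection of trivializing opens, and then use a partition of unity together with bump functions to assemble global generators out of the local frames. The resulting generating set will have cardinality at most $r \cdot (n_0+1)$, which is the expected count.

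First I would apply Lemma \ref{lem_finitetrivial} to obtain a finite open cover $\{ U_k \}_{k=0}^{n_0}$ of $M$ together with local frames $s_1^{(k)}, \dots, s_r^{(k)} \in \scrS(U_k)$ for each $k$. Let $\{ \lambda_k \}_{k=0}^{n_0}$ be a partition of unity subordinate to $\{ U_k \}$, and for each $k$ pick a bump function $\mu_k \in \cifty_{\calM}(M)$ with $\supp(\mu_k) \subseteq U_k$ and $\mu_k = 1$ on some open neighborhood of $\supp(\lambda_k)$. Then define the candidate global generators $\tilde s_i^{(k)} \in \scrS(M)$ by gluing $\mu_k \cdot s_i^{(k)} \in \scrS(U_k)$ with the zero section of $\scrS$ on $M \ssm \supp(\mu_k)$ via the sheaf property applied to the open cover $\{ U_k, M \ssm \supp(\mu_k) \}$ of $M$.

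Next I would show that the finite family $\{ \tilde s_i^{(k)} \}_{k,i}$ generates $\scrS(M)$ as a $\cifty_{\calM}(M)$-module. Given any $s \in \scrS(M)$, expand $s|_{U_k} = \sum_i f^i_{(k)} \cdot s_i^{(k)}$ in the local frame over $U_k$, and let $\tilde h_i^{(k)} \in \cifty_{\calM}(M)$ denote the extension by zero of $\lambda_k \cdot f^i_{(k)} \in \cifty_{\calM}(U_k)$; this is legitimate because $\supp(\lambda_k f^i_{(k)}) \subseteq \supp(\lambda_k) \subseteq U_k$. Since $\mu_k$ equals $1$ on $\supp(\lambda_k)$, the global section $\tilde h_i^{(k)} \cdot \tilde s_i^{(k)}$ restricts on $U_k$ to $\lambda_k f^i_{(k)} \cdot s_i^{(k)}$ and vanishes outside $\supp(\lambda_k)$. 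Summing over $i$ and using $s|_{U_k} = \sum_i f^i_{(k)} s_i^{(k)}$, we obtain $\lambda_k \cdot s = \sum_i \tilde h_i^{(k)} \cdot \tilde s_i^{(k)}$ as a global identity, and a further sum over $k$ combined with $\sum_k \lambda_k = 1$ gives
\begin{equation}
    s \; = \; \sum_{k=0}^{n_0} \sum_{i=1}^{r} \tilde h_i^{(k)} \cdot \tilde s_i^{(k)},
\end{equation}
proving finite generation.

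The main technical obstacle I anticipate is the careful bookkeeping of supports so that every extension-by-zero is legitimate and every manipulation with the partition of unity makes sense at the level of the sheaf of $\cifty_{\calM}$-modules. The necessary machinery for this (existence of partitions of unity subordinate to arbitrary open covers, gluing via the sheaf property, and the behavior of bump functions and their products) is already developed in \cite{Vysoky:2022gm} and was used extensively in Propositions \ref{eq_GVBinjectiveequivalent} and \ref{eq_GVBsurjectiveequivalent}, so this bookkeeping should be routine. The decisive input is really the finiteness of the cover supplied by Lemma \ref{lem_finitetrivial}; without it the naive construction $\{ \lambda_\alpha s_i^{(\alpha)} \}$ would yield an infinite generating set.
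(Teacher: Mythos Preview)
Your proposal is correct and follows essentially the same approach as the paper: both use the finite cover from Lemma \ref{lem_finitetrivial}, extend the local frame elements to global sections by multiplying with bump-type functions, and express an arbitrary section via a partition of unity. The only cosmetic difference is that the paper defines its global generators as $\rho_k \cdot s_i^{(k)}$ using the partition functions themselves, then introduces a \emph{second} partition of unity subordinate to the opens $V_k = \{\rho_k > 0\}$ to write the coefficients, whereas you use auxiliary bumps $\mu_k$ (equal to $1$ on $\supp(\lambda_k)$) for the generators and the single partition $\{\lambda_k\}$ for the coefficients; the trick $\lambda_k \mu_k = \lambda_k$ makes your bookkeeping slightly shorter.
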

\begin{proof}
Let $\{ U_{k} \}_{k=0}^{n_{0}}$ be the open cover constructed in Lemma \ref{lem_finitetrivial}. For each $k \in \{0, \dots, n_{0}\}$, one thus has a local frame $s_{1}^{(k)}, \dots, s_{r}^{(k)}$ for $\scrS$ over $U_{k}$. Let $\{ \rho_{k} \}_{k=0}^{n_{0}}$ be a partition of unity subordinate to this open cover. For each $i \in \{1,\dots,r\}$ and $k \in \{0,\dots,n_{0}\}$, set
\begin{equation}
    \ol{s}_{i}^{(k)} := \rho_{k} \cdot s_{i}^{(k)} \in \scrS(M).
\end{equation}
For manipulations with partitions of unity in the graded setting, see \S 3.5 of \cite{Vysoky:2022gm}. 

For each $k \in \{0,\dots,n_{0}\}$, let us consider an open subset $V_{k} := \{ m \in U_{k} \mid \rho_{k}(m) > 0 \} \subseteq U_{k}$. Observe that $\{ V_{k} \}_{k=0}^{n_{0}}$ still covers $M$. Since $\sum_{k=0}^{n_{0}} \rho_{k} = 1$, for every $m \in M$, there must be $k \in \{0,\dots,n_{0}\}$, such that $\rho_{k}(m) > 0$. In particular, one has $m \in \supp(\ul{\rho_{k}}) \subseteq \supp(\rho_{k}) \subseteq U_{k}$, and thus $m \in V_{k}$. This ensures that $\rho_{k}|_{V_{k}}$ has a multiplicative inverse and thus $\ol{s}_{1}^{(k)}|_{V_{k}}, \dots, \ol{s}_{r}^{(k)}|_{V_{k}}$ defines a local frame for $\scrS$ over $V_{k}$ for each $k \in \{0,\dots,n_{0}\}$. 

We claim that the finite collection $\{ \ol{s}_{i}^{(k)} \}_{i,k}$ generates $\scrS(M)$. Let $s \in \scrS(M)$ be arbitrary. For each $k \in \{0, \dots, n_{0} \}$, there is a unique collection $\{ f^{i}_{(k)} \}_{i=1}^{r} \subseteq \cifty_{\calM}(V_{k})$, such that 
\begin{equation}
    s|_{V_{k}} = f^{i}_{(k)} \cdot \ol{s}_{i}^{(k)}|_{V_{k}}.
\end{equation}
Let $\{ \eta_{k} \}_{k=0}^{n_{0}}$ be a partition of unity subordinate to $\{ V_{k} \}_{k=0}^{n_{0}}$. One can write
\begin{equation}
    s = \sum_{k=0}^{n_{0}} \eta_{k} \cdot s|_{V_{k}} = \sum_{k=0}^{n_{0}} \eta_{k} \cdot (\sum_{i = 0}^{r} f^{i}_{(k)} \cdot \ol{s}^{(k)}_{i}|_{V_{k}} ) = \sum_{k=0}^{n_{0}} \sum_{i =0}^{r} (\eta_{k} \cdot f^{i}_{(k)}) \cdot \ol{s}^{(k)}_{i}.
\end{equation}
Note that $\eta_{k} \cdot f^{i}_{(k)} \in \cifty_{\calM}(M)$ and we have explicitly stressed the sum over $i$. This proves the claim.
\end{proof}

\begin{lemma} \label{lem_sectionsareproj}
Let $\scrS \in \svbun$ be a sheaf-defined GVB over a graded manifold $\calM$. 

Then $\scrS(M)$ is a projective $\cifty_{\calM}(M)$-module.    
\end{lemma}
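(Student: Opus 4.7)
My plan is to show $\scrS(M)$ is a direct summand of a finitely generated free $\cifty_\calM(M)$-module, which suffices by Proposition \ref{tvrz_projective}. All the necessary tools are already in place; the proof is essentially a careful assembly.

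First, I would apply Lemma \ref{lem_sectionsarefingen} to obtain a finite generating set $\{\ol{s}_i^{(k)}\}_{i,k}$ of $\scrS(M)$. Let $K \in \gvec$ be the finite-dimensional graded vector space with a distinguished basis $\{h_i^{(k)}\}_{i,k}$ indexed by the same set, with degrees matched to those of the generators, $|h_i^{(k)}| := |\ol{s}_i^{(k)}|$. Then the trivial GVB $\scrL := \cifty_\calM[K]$ (cf. the example right before Proposition \ref{prop_freeisAK}) is a sheaf-defined GVB with global sections $\scrL(M) = \cifty_\calM(M) \otimes_\R K$, which is a finitely generated free $\cifty_\calM(M)$-module with frame $\{1 \otimes h_i^{(k)}\}_{i,k}$.

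Next, I would define a degree zero $\cifty_\calM(M)$-linear map $F_0 : \scrL(M) \rightarrow \scrS(M)$ by sending $1 \otimes h_i^{(k)} \mapsto \ol{s}_i^{(k)}$ and extending $\cifty_\calM(M)$-linearly. By the choice of the generators, $F_0$ is surjective. By Proposition \ref{prop_GVBsoveridentity}, $F_0$ corresponds to a morphism $F: \scrL \rightarrow \scrS$ of GVBs over $\calM$, and Proposition \ref{eq_GVBsurjectiveequivalent} ensures that $F$ is surjective as a morphism of sheaves.

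Now I would invoke Proposition \ref{tvrz_GVBsurjectivesplits}: the surjection $F$ admits a right inverse $G: \scrS \rightarrow \scrL$ in $\svbun$, so $F \circ G = \1_\scrS$. Passing to global sections and using Proposition \ref{prop_GVBsoveridentity} again gives $F_0 \circ G_0 = \1_{\scrS(M)}$, so $G_0$ is an $\cifty_\calM(M)$-linear splitting. Therefore $\scrS(M)$ is isomorphic to the direct summand $G_0(\scrS(M))$ of the finitely generated free $\cifty_\calM(M)$-module $\scrL(M)$, and projectivity follows from Proposition \ref{tvrz_projective}. There is no genuine obstacle here; the whole argument hinges on the splitting of surjections in $\svbun$ (Proposition \ref{tvrz_GVBsurjectivesplits}), which has already been established, so this proof is essentially a clean transcription of the standard ``finite generating set + splitting surjection'' argument into the graded sheaf-theoretic setting.
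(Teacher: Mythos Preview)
Your proof is correct and follows essentially the same approach as the paper: construct a surjection from a trivial GVB $\cifty_\calM[K]$ onto $\scrS$ using a finite generating set, invoke Proposition \ref{tvrz_GVBsurjectivesplits} to split it, and conclude that $\scrS(M)$ is a direct summand of a free module. The paper phrases the splitting via the short exact sequence $0 \to \ker(F) \to \cifty_\calM[K] \to \scrS \to 0$ (noting along the way that $\ker(F)$ is a subbundle), whereas you go directly from the right inverse $G$ to the direct-summand decomposition, but this is a purely cosmetic difference.
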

\begin{proof}
    We know from Lemma \ref{lem_sectionsarefingen} that $\scrS(M)$ is finitely generated. Let $S = \{ s_{1}, \dots, s_{n} \}$ be a finite generating set for $\scrS(M)$. Let $K := \R(S)$ be the free graded vector space generated by $S$, and let $F_{0}: \cifty_{\calM}(M)[K] \rightarrow \scrS(M)$ be a morphism of $\cifty_{\calM}(M)$-modules satisfying
    \begin{equation}
        F_{0}(1 \otimes s_{i}) := s_{i},
    \end{equation}
    for all $i \in \{1, \dots, n\}$. By Proposition \ref{prop_GVBsoveridentity} and Proposition \ref{eq_GVBsurjectiveequivalent}, it defines a surjective GVB morphism $F: \cifty_{\calM}[K] \rightarrow \scrS$, see also the proof of Proposition \ref{prop_localframecompl}. Moreover, since $F$ is surjective, $\ker(F) \subseteq \cifty_{\calM}[K]$ is a subbundle, see Proposition \ref{prop_kerimsubbundles}. We thus obtain a short exact sequence of GVBs:
    \begin{equation}
        \begin{tikzcd}
            0 \arrow{r} & \ker(F) \arrow{r} & \cifty_{\calM}[K] \arrow{r}{F} & \scrS \arrow{r} & 0 
        \end{tikzcd}
    \end{equation}
    By Proposition \ref{tvrz_GVBsurjectivesplits}, this sequence splits, hence there is an isomorphism $\cifty_{\calM}[K] \cong \ker(F) \oplus \scrS$. In particular, this implies that 
    \begin{equation}
        \cifty_{\calM}(M)[K] \cong \ker(F_{0}) \oplus \scrS(M).
    \end{equation}
    This shows that $\scrS(M)$ is isomorphic to a summand of a free $\cifty_{\calM}(M)$-module. Hence it is projective by Proposition \ref{tvrz_projective}. This finishes the proof.
\end{proof}
We are now ready to prove the main statement of this subsection.
\begin{proposition} \label{prop_upsilon1}
    Let $\scrS \in \svbun$ be a sheaf-defined GVB over a graded manifold $\calM$. 
    
    Then $(\calM, \scrS(M))$ is an algebraic GVB. Moreover, the assignment $\scrS \mapsto (\calM, \scrS(M))$ defines a fully faithful functor $\Upsilon: \svbun \rightarrow \avbun$.
\end{proposition}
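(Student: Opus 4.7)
The proof breaks naturally into four parts: well-definedness on objects, the action on morphisms, functoriality, and the fully faithful property.

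The object-level assertion that $(\calM, \scrS(M))$ is an algebraic GVB is immediate: Lemma \ref{lem_sectionsarefingen} gives finite generation and Lemma \ref{lem_sectionsareproj} gives projectivity of $\scrS(M)$ as a $\cifty_\calM(M)$-module. Before defining $\Upsilon$ on morphisms, we record the one identification everything hinges on: for any GVB $\scrS$ over $\calM$, there is a canonical isomorphism of $\cifty_\calM(M)$-modules
\begin{equation}
    \scrS^\ast(M) \cong \ul{\Hom}_{\cifty_\calM(M)}(\scrS(M), \cifty_\calM(M)) =: \scrS(M)^\star.
\end{equation}
This is essentially a special case of Proposition \ref{prop_GVBsoveridentity} (morphisms of the sheaf $\scrS$ into the trivial rank-one GVB $\cifty_\calM$ are the same thing as $\cifty_\calM(M)$-linear maps on global sections), extended componentwise across degrees. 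Note that $\scrS^\ast$ is a GVB by the discussion preceding Definition \ref{def_svbun}, so the proposition applies.

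With this identification in hand, define $\Upsilon$ on a morphism $(\varphi,\Lambda): \scrS \rightarrow \scrP$ of $\svbun$ by
\begin{equation}
    \Upsilon(\varphi,\Lambda) := (\varphi, \Lambda_{N}),
\end{equation}
where $\Lambda_{N}: \scrP^\ast(N) \rightarrow \varphi_\ast(\scrS^\ast)(N) = \scrS^\ast(M)$ is the component of the natural transformation $\Lambda$ at $N$. Since $\Lambda$ is a morphism of sheaves of $\cifty_\calN$-modules, $\Lambda_{N}(g \cdot \psi) = \varphi^\ast(g) \cdot \Lambda_{N}(\psi)$ holds for all $g \in \cifty_\calN(N)$ and $\psi \in \scrP^\ast(N)$, which is exactly the condition in Definition \ref{def_avbun}. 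Functoriality is then automatic: identities and compositions in $\svbun$ are defined componentwise on the underlying natural transformations, so $\Upsilon(\1_\scrS) = \1_{(\calM,\scrS(M))}$ and $\Upsilon(\Psi \circ \Phi) = \Upsilon(\Psi) \circ \Upsilon(\Phi)$.

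Faithfulness follows directly from Proposition \ref{prop_GVBmorphisms}: the entire natural transformation $\Lambda$ is uniquely reconstructed from the single linear map $\Lambda_{N}$, so $\Lambda_{N} = \Lambda'_{N}$ forces $\Lambda = \Lambda'$. For fullness, given any morphism $(\varphi, \Lambda_{0}): (\calM, \scrS(M)) \rightarrow (\calN, \scrP(N))$ in $\avbun$, transport $\Lambda_{0}$ across the identification above to obtain a $\cifty_\calN(N)$-linear map $\tilde\Lambda_{0}: \scrP^\ast(N) \rightarrow \varphi_\ast(\scrS^\ast)(N)$. By the second half of Proposition \ref{prop_GVBmorphisms}, $\tilde\Lambda_{0}$ lifts uniquely to a morphism $\Lambda: \scrP^\ast \rightarrow \varphi_\ast(\scrS^\ast)$ of sheaves of $\cifty_\calN$-modules with $\Lambda_{N} = \tilde\Lambda_{0}$, and then $(\varphi,\Lambda) \in \Hom_\svbun(\scrS, \scrP)$ satisfies $\Upsilon(\varphi,\Lambda) = (\varphi,\Lambda_{0})$.

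The only step requiring real care is the identification $\scrS^\ast(M) \cong \scrS(M)^\star$, since without it the algebraic object of morphisms in $\avbun$ and the sheaf-theoretic one would not literally coincide. Everything else is bookkeeping riding on Propositions \ref{prop_GVBmorphisms} and \ref{prop_GVBsoveridentity}, both of which have already done the hard localization work via partitions of unity.
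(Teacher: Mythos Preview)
Your proof is correct and follows essentially the same route as the paper: objects via Lemmas \ref{lem_sectionsarefingen} and \ref{lem_sectionsareproj}, morphisms and full faithfulness via Proposition \ref{prop_GVBmorphisms}. The one place you work harder than necessary is the identification $\scrS^{\ast}(M) \cong \scrS(M)^{\star}$: in the paper's conventions (see the paragraph on dual sheaves in Section \ref{section_sheaf}) the sheaf dual is \emph{defined} by $\scrS^{\ast}(U) := \ul{\Hom}_{\cifty_{\calM}(U)}(\scrS(U), \cifty_{\calM}(U))$, so this is a literal equality rather than something requiring Proposition \ref{prop_GVBsoveridentity}.
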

\begin{proof}
    The fact that $(\calM,\scrS(M)) \in \avbun$ follows from Lemma \ref{lem_sectionsarefingen} and Lemma \ref{lem_sectionsareproj}. Moreover, for any GVB morphism $(\varphi,\Lambda)$, we set $\Upsilon(\varphi,\Lambda) := (\varphi, \Lambda_{0})$, see Proposition \ref{prop_GVBmorphisms}. It is easy to see that this makes $\Upsilon$ into a functor. It follows immediately from Proposition \ref{prop_GVBmorphisms} that $\Upsilon$ is fully faithful.
\end{proof}
\begin{lemma} \label{lem_trivialfree}
    Let $\scrS \in \svbun$ be a sheaf-defined GVB over a manifold $\calM$. 

    Then $\scrS$ is trivial, iff $\scrS(M)$ is free.
\end{lemma}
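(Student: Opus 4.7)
The plan is to reduce both implications to the results already established, chiefly Proposition \ref{prop_GVBsoveridentity} and the characterization of finite frames from Remark \ref{rem_aboutfrees}.

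The ``only if'' direction is essentially unpacking definitions. Suppose $\scrS$ is trivial, so there is a finite frame $s_{1}, \dots, s_{r} \in \scrS(M)$ for the sheaf. By the definition of a sheaf frame (applied with $U = M$), every $s \in \scrS(M)$ admits a \emph{unique} expression $s = \sum_{i=1}^{r} f^{i} \cdot s_{i}$ with $f^{i} \in \cifty_{\calM}(M)$. This is precisely the statement that $\{s_{i}\}_{i=1}^{r}$ is a frame for the $\cifty_{\calM}(M)$-module $\scrS(M)$ in the sense of Definition \ref{def_frame}, so $\scrS(M)$ is free.

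For the ``if'' direction, assume $\scrS(M)$ is free. By Lemma \ref{lem_sectionsarefingen}, $\scrS(M)$ is finitely generated; combined with Remark \ref{rem_aboutfrees}(3), this ensures the existence of a \emph{finite} frame $s_{1}, \dots, s_{r}$ for $\scrS(M)$. Let $K$ be the graded vector space freely generated by $\{s_{i}\}_{i=1}^{r}$ (with $|h_{i}| := |s_{i}|$), and consider the trivial GVB $\cifty_{\calM}[K]$ whose global sections are $\cifty_{\calM}(M)[K] = \cifty_{\calM}(M) \otimes_{\R} K$. Define
\begin{equation}
    F_{0}: \cifty_{\calM}(M)[K] \rightarrow \scrS(M), \qquad F_{0}(1 \otimes h_{i}) := s_{i},
\end{equation}
and extend $\cifty_{\calM}(M)$-linearly. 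Since $\{1 \otimes h_{i}\}_{i=1}^{r}$ is a frame for $\cifty_{\calM}(M)[K]$ and $\{s_{i}\}_{i=1}^{r}$ is a frame for $\scrS(M)$, $F_{0}$ is an isomorphism of $\cifty_{\calM}(M)$-modules.

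Finally, by Proposition \ref{prop_GVBsoveridentity} the assignment $F \mapsto F_{M}$ is a bijection between morphisms of sheaves of $\cifty_{\calM}$-modules $\cifty_{\calM}[K] \rightarrow \scrS$ and morphisms of $\cifty_{\calM}(M)$-modules $\cifty_{\calM}(M)[K] \rightarrow \scrS(M)$, and this bijection respects compositions and identities. The inverse $F_{0}^{-1}$ thus corresponds to a sheaf morphism $G: \scrS \rightarrow \cifty_{\calM}[K]$, and the relations $F_{0} \circ F_{0}^{-1} = \1$ and $F_{0}^{-1} \circ F_{0} = \1$ lift uniquely to $F \circ G = \1_{\scrS}$ and $G \circ F = \1_{\cifty_{\calM}[K]}$. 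Hence $\scrS \cong \cifty_{\calM}[K]$ is trivial. There is no serious obstacle here; the only point requiring minor care is observing that the bijection of Proposition \ref{prop_GVBsoveridentity} automatically transports isomorphisms to isomorphisms, which follows from its functoriality (alternatively, one may verify triviality fiber-wise via Corollary \ref{cor_fiberinjsur} using Lemma \ref{lem_fiber}, but this is a longer route).
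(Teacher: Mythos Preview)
Your proof is correct and follows essentially the same approach as the paper: both directions are handled identically, with the converse reducing to an isomorphism $\scrS(M) \cong \cifty_{\calM}(M)[K]$ of global sections that is then promoted to a GVB isomorphism via Proposition \ref{prop_GVBsoveridentity}. Your explicit appeal to Lemma \ref{lem_sectionsarefingen} and Remark \ref{rem_aboutfrees}(3) to secure a \emph{finite} frame is a welcome clarification of a step the paper leaves slightly implicit.
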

\begin{proof}
    Recall that $\scrS$ is called trivial when it is freely and finitely generated. This immediately implies that $\scrS(M)$ is free. Conversely, if $\scrS(M)$ is free, it is isomorphic to $\cifty_{\calM}(M)[K]$ for some finite-dimensional graded vector space $K$, see Proposition \ref{prop_freeisAK} and Remark \ref{rem_aboutfrees}. 
    
    But $\cifty_{\calM}(M)[K]$ is a module of global sections of the sheaf $\cifty_{\calM}[K]$, see the proof of Proposition \ref{prop_localframecompl} or Example 2.37 in \cite{Vysoky:2022gm}. This sheaf of $\cifty_{\calM}$-modules is freely and finitely generated, hence a trivial GVB. The isomorphism of modules of global sections extends uniquely to a GVB isomorphism by Proposition \ref{prop_GVBsoveridentity}, that is $\scrS \cong \cifty_{\calM}[K]$. Hence $\scrS$ is also a trivial GVB. 
\end{proof}
\subsection{Serre--Swan Theorem}
Our ultimate goal is to prove that $\Upsilon$ is an equivalence of categories. In view of Proposition \ref{prop_GVBmorphisms}, it remains to prove that the functor is essentially surjective. In other words, to any algebraic GVB $(\calM, P)$, we must find a sheaf-defined GVB $\scrP$, such that $(\calM,P) \cong \Upsilon(\scrP) \equiv (\calM, \scrP(M))$. We will directly construct a sheaf $\scrP$. To do so, we need a sequence of technical results. Let us start with a simple observation.
\begin{lemma} \label{lem_localaredetbyglobal}
    Let $\scrS$ be a sheaf of $\cifty_{\calM}$-modules and let $\scrP \subseteq \scrS$ be its sheaf of $\cifty_{\calM}$-submodules. Let $s \in \scrS(U)$ for some $U \in \Op(M)$. Then the following two properties are equivalent:
    \begin{enumerate}
        \item $s \in \scrP(U)$;
        \item For each $m \in U$, there exists $V \in \Op_{m}(U)$ and $s' \in \scrP(M)$, such that $s'|_{V} = s|_{V}$.
    \end{enumerate}
\end{lemma}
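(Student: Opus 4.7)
The plan is to prove the two directions separately; the nontrivial content is the implication $1. \Rightarrow 2.$, which is a standard bump function argument, while $2. \Rightarrow 1.$ is just the sheaf gluing axiom applied to $\scrP$.

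For $1. \Rightarrow 2.$, assume $s \in \scrP(U)$ and fix $m \in U$. Choose a bump function $\eta \in \cifty_{\calM}(M)$ with $\supp(\eta) \subseteq U$ and $\eta|_{V} = 1$ for some $V \in \Op_{m}(U)$ with $\ol{V} \subseteq U$. Define a candidate global section $s'$ by gluing the two sections
\begin{equation}
    s'|_{U} := \eta|_{U} \cdot s, \qquad s'|_{\supp(\eta)^{c}} := 0,
\end{equation}
over the open cover $\{U, \supp(\eta)^{c}\}$ of $M$. Compatibility on $U \cap \supp(\eta)^{c}$ holds because $\eta$ vanishes there, so the sheaf axiom for $\scrS$ produces $s' \in \scrS(M)$. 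The point is that both pieces actually lie in $\scrP$: the restriction $\eta|_{U} \cdot s$ is in $\scrP(U)$ because $\scrP(U)$ is a $\cifty_{\calM}(U)$-submodule of $\scrS(U)$, and $0 \in \scrP(\supp(\eta)^{c})$. Applying the sheaf axiom for $\scrP$ to the same gluing data therefore yields a section in $\scrP(M)$ whose image in $\scrS(M)$ must coincide with $s'$ by uniqueness, so $s' \in \scrP(M)$. Finally, $s'|_{V} = \eta|_{V} \cdot s|_{V} = s|_{V}$, as required.

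For $2. \Rightarrow 1.$, suppose that for each $m \in U$ there exist $V_{(m)} \in \Op_{m}(U)$ and $s'_{(m)} \in \scrP(M)$ with $s'_{(m)}|_{V_{(m)}} = s|_{V_{(m)}}$. The family $\{V_{(m)}\}_{m \in U}$ is an open cover of $U$. Set $\sigma_{m} := s'_{(m)}|_{V_{(m)}} \in \scrP(V_{(m)})$, using that $\scrP$ is a subsheaf. On overlaps,
\begin{equation}
    \sigma_{m}|_{V_{(m)} \cap V_{(m')}} = s|_{V_{(m)} \cap V_{(m')}} = \sigma_{m'}|_{V_{(m)} \cap V_{(m')}},
\end{equation}
so the family $\{\sigma_{m}\}_{m \in U}$ is a compatible collection of local sections of $\scrP$. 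By the sheaf axiom for $\scrP$, there is a unique $\tilde{s} \in \scrP(U)$ with $\tilde{s}|_{V_{(m)}} = \sigma_{m}$ for each $m \in U$. But $s$ also satisfies $s|_{V_{(m)}} = \sigma_{m}$ in $\scrS(U)$, so the uniqueness part of the sheaf axiom for $\scrS$ forces $\tilde{s} = s$ as elements of $\scrS(U)$. Hence $s \in \scrP(U)$.

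The only potential subtlety is the first implication, where one must use the sheaf axiom for the subsheaf $\scrP$ (not just $\scrS$) to conclude that the extension-by-zero $s'$ lies in $\scrP(M)$ rather than merely in $\scrS(M)$; once the decomposition via the bump function $\eta$ is in place, this is immediate because $\scrP(U)$ is closed under multiplication by $\cifty_{\calM}(U)$ and $0$ is always in $\scrP$.
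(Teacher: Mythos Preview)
Your proof is correct and follows essentially the same approach as the paper: the bump-function extension-by-zero for $1 \Rightarrow 2$ and the subsheaf gluing axiom for $2 \Rightarrow 1$ are exactly what the paper does. Your version of $2 \Rightarrow 1$ is slightly more explicit (spelling out the compatible family and the uniqueness in $\scrS$), but the content is identical.
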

\begin{proof}
    Suppose $s \in \scrP(U)$, and let $m \in U$. Choose some $V \in \Op_{m}(U)$ with $\ol{V} \subseteq U$ and a smooth bump function $\eta \in \cifty_{\calM}(M)$ supported in $U$ and satisfying $\eta|_{V} = 1$. Let $s' := \eta \cdot s$. Let $U' := M \ssm \supp(\eta)$. Then
    \begin{equation}
        s'|_{U} = \eta|_{U} \cdot s \in \scrP(U), \; \; s'|_{U'} = 0 \in \scrP(U').
    \end{equation}
    Since $\{ U, U'\}$ is an open cover of $M$ and $\scrP$ is a subsheaf, this implies that $s' \in \scrP(M)$. Moreover, one has $s'|_{V} = \eta|_{V} \cdot s|_{V} = s|_{V}$. The implication \textit{1} $\Rightarrow$ \textit{2} is proved. Conversely, suppose that the property \textit{2} holds. In particular, this implies that for each $m \in U$, one has $V \in \Op_{m}(U)$, such that $s|_{V} \in \scrP(V)$. Since $\scrP$ is a subsheaf, this proves that $s \in \scrP(U)$. This proves the implication \textit{2} $\Rightarrow$ \textit{1}.
\end{proof}
This lemma shows that local sections of a sheaf $\scrP$ of a $\cifty_{\calM}$-submodules are uniquely characterized by the property that they are locally restrictions of elements of a $\cifty_{\calM}(M)$-submodule $\scrP(M)$. We will utilize this to \textit{construct} a subsheaf $\scrP$ starting from any given submodule $P \subseteq \scrS(M)$. 
\begin{definition}
    Let $\scrS$ be a sheaf of $\cifty_{\calM}$-modules. A \textbf{support} of $s \in \scrS(M)$ is a subset
    \begin{equation}
        \supp(s) := \{ m \in M \mid [s]_{m} \neq 0 \} \subseteq M,
    \end{equation}
    where $[s]_{m}$ is the germ of the section $s$ in the stalk $\scrS_{m}$. 

    Let $\{ s_{\alpha} \}_{\alpha \in I} \subseteq \scrS(M)$ be any collection of sections. We say that it is \textbf{locally finite}, if the collection $\{ \supp(s_{\alpha}) \}_{\alpha \in I}$ is locally finite. 
\end{definition}
\begin{lemma}
    For each $s \in \scrS(M)$, the subset $\supp(s)$ is closed and $s|_{M \ssm \supp(s)} = 0$.

    For any locally finite collection $\{ s_{\alpha} \}_{\alpha \in I} \subseteq \scrS(M)$, there is a well-defined sum $\sum_{\alpha \in I} s_{\alpha} \in \scrS(M)$.
\end{lemma}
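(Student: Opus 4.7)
The plan is to dispatch the two assertions separately, both by reducing to the sheaf axioms for $\scrS$.

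For the first assertion, I will show that the complement $M \ssm \supp(s)$ is open. If $m \in M \ssm \supp(s)$, then $[s]_{m} = 0$, which by the definition of the stalk means there exists $V \in \Op_{m}(M)$ with $s|_{V} = 0$. For any $m' \in V$ the germ satisfies $[s]_{m'} = [s|_{V}]_{m'} = 0$, hence $V \subseteq M \ssm \supp(s)$, so this set is open. For the vanishing of $s$ on $M \ssm \supp(s)$, let $U := M \ssm \supp(s)$ and cover it by the neighborhoods $V_{m}$ produced above. On each $V_{m}$ we have $s|_{V_{m}} = 0$, and by the uniqueness part of the sheaf axiom for $\scrS$ applied on $U$ this forces $s|_{U} = 0$.

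For the second assertion, the idea is standard: exploit local finiteness to reduce to finite sums on a cover, then glue. Since $\{ \supp(s_{\alpha}) \}_{\alpha \in I}$ is locally finite, one can choose an open cover $\{ U_{\beta} \}_{\beta \in J}$ of $M$ such that for every $\beta \in J$ the subset
\begin{equation}
    I_{\beta} := \{ \alpha \in I \mid \supp(s_{\alpha}) \cap U_{\beta} \neq \emptyset \}
\end{equation}
is finite. For any $\alpha \notin I_{\beta}$ we have $U_{\beta} \subseteq M \ssm \supp(s_{\alpha})$, and by the first assertion $s_{\alpha}|_{U_{\beta}} = 0$. We can thus define
\begin{equation}
    t_{\beta} := \sum_{\alpha \in I_{\beta}} s_{\alpha}|_{U_{\beta}} \in \scrS(U_{\beta}),
\end{equation}
a genuinely finite sum.

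It remains to check compatibility on pairwise intersections and to invoke the sheaf gluing axiom. On $U_{\beta} \cap U_{\beta'}$ one computes
\begin{equation}
    t_{\beta}|_{U_{\beta} \cap U_{\beta'}} = \sum_{\alpha \in I_{\beta}} s_{\alpha}|_{U_{\beta} \cap U_{\beta'}} = \sum_{\alpha \in I_{\beta} \cup I_{\beta'}} s_{\alpha}|_{U_{\beta} \cap U_{\beta'}} = t_{\beta'}|_{U_{\beta} \cap U_{\beta'}},
\end{equation}
where the middle equality uses that for $\alpha \in I_{\beta'} \ssm I_{\beta}$ the section $s_{\alpha}|_{U_{\beta}}$ already vanishes by the first assertion, and symmetrically for $I_{\beta} \ssm I_{\beta'}$. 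By the gluing axiom for $\scrS$, there exists a unique $t \in \scrS(M)$ with $t|_{U_{\beta}} = t_{\beta}$ for all $\beta \in J$, which we declare to be $\sum_{\alpha \in I} s_{\alpha}$. I expect the main technical point to be the verification that $t$ does not depend on the chosen cover; this follows by passing to a common refinement of two witnessing covers and using uniqueness in the gluing, so it is essentially bookkeeping rather than a genuine obstacle.
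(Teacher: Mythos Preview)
Your proposal is correct and follows essentially the same route as the paper: the paper declares the first claim ``an easy exercise'' and for the second claim sketches precisely your construction (local finite sums on a cover coming from local finiteness, then gluing via the sheaf axioms), explicitly leaving existence, uniqueness, and independence of choices to the reader. You have simply filled in those details.
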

\begin{proof}
The first claim is an easy exercise. For the second claim: let $m \in M$ be arbitrary. Since $\{ s_{\alpha} \}_{\alpha \in I}$ is locally finite, one can find $U_{m} \in \Op_{m}(M)$ a finite subset $I_{m} \subseteq I$, such that $U_{m} \cap \supp(s_{\alpha}) = \emptyset$ for all $\alpha \in I \ssm I_{m}$. The sum $\sum_{\alpha \in I} s_{\alpha} \in \scrS(M)$ is then uniquely determined by the requirement
\begin{equation} \label{eq_locfinitesum}
    (\sum_{\alpha \in I} s_{\alpha})|_{U_{m}} = \sum_{\alpha \in I_{m}} s_{\alpha}|_{U_{m}}.
\end{equation}
We leave for the reader to prove that such a section exists, it is unique, and it does not depend on any choices.
\end{proof}
\begin{definition}
    Let $\scrS$ be a sheaf of $\cifty_{\calM}$-modules. We say that a $\cifty_{\calM}(M)$-submodule $P \subseteq \scrS(M)$ is \textbf{closed under locally finite sums}, if for any locally finite collection $\{ s_{\alpha} \}_{\alpha \in I} \subseteq P$, one has $\sum_{\alpha \in I} s_{\alpha} \in P$. 
\end{definition}
\begin{lemma} \label{lem_sheafglobalclosedunderlocfinsums}
    Let $\scrP \subseteq \scrS$ be a sheaf of $\cifty_{\calM}$-submodules. 
    
    Then $P := \scrP(M)$ is closed under locally finite sums. 
\end{lemma}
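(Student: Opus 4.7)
The plan is to reduce the statement directly to Lemma \ref{lem_localaredetbyglobal}, which characterizes membership in $\scrP(U)$ for a sheaf of $\cifty_{\calM}$-submodules via local agreement with a global section of $\scrP$.

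Let $\{s_{\alpha}\}_{\alpha \in I} \subseteq P = \scrP(M)$ be a locally finite collection, and let $s := \sum_{\alpha \in I} s_{\alpha} \in \scrS(M)$ denote its sum, constructed as in equation (\ref{eq_locfinitesum}). A priori $s$ lies only in $\scrS(M)$; the goal is to show $s \in \scrP(M)$. Fix $m \in M$. By local finiteness, one can pick $U_{m} \in \Op_{m}(M)$ and a finite subset $I_{m} \subseteq I$ such that $U_{m} \cap \supp(s_{\alpha}) = \emptyset$ for every $\alpha \in I \ssm I_{m}$, and such that
\begin{equation}
    s|_{U_{m}} = \sum_{\alpha \in I_{m}} s_{\alpha}|_{U_{m}}.
\end{equation}

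Now define $s'_{m} := \sum_{\alpha \in I_{m}} s_{\alpha} \in \scrS(M)$. Since $I_{m}$ is finite and $P = \scrP(M)$ is a $\cifty_{\calM}(M)$-submodule of $\scrS(M)$, this is an ordinary finite sum of elements of $P$, hence $s'_{m} \in \scrP(M)$. Restricting to $U_{m}$ and using compatibility of $\scrP$-restrictions with $\scrS$-restrictions gives $s'_{m}|_{U_{m}} = \sum_{\alpha \in I_{m}} s_{\alpha}|_{U_{m}} = s|_{U_{m}}$. Hence for every $m \in M$ we have found $V := U_{m} \in \Op_{m}(M)$ and $s'_{m} \in \scrP(M)$ satisfying $s'_{m}|_{V} = s|_{V}$, which is exactly condition \textit{2} of Lemma \ref{lem_localaredetbyglobal}. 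Applying that lemma with $U = M$ yields $s \in \scrP(M) = P$, establishing that $P$ is closed under locally finite sums.

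There is essentially no hard obstacle here: the result is a direct consequence of the already proven characterization of subsheaf sections, combined with the defining local formula for a locally finite sum. The only point requiring a bit of care is to notice that although the full sum $\sum_{\alpha \in I} s_{\alpha}$ need not reduce to a finite sum globally, on each sufficiently small neighborhood $U_{m}$ it does, and that is all Lemma \ref{lem_localaredetbyglobal} requires since the global witnesses $s'_{m}$ need not be one single section but can depend on $m$.
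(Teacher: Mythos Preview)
Your proof is correct and follows essentially the same idea as the paper: both use the local formula (\ref{eq_locfinitesum}) to see that on each $U_{m}$ the sum reduces to a finite sum of sections of $\scrP$. The only difference is cosmetic: the paper concludes directly from $s|_{U_{m}} \in \scrP(U_{m})$ and the subsheaf property, whereas you produce global witnesses $s'_{m} \in \scrP(M)$ and invoke Lemma \ref{lem_localaredetbyglobal}; since that lemma is itself just a repackaging of the subsheaf condition, the two arguments are equivalent.
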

\begin{proof}
    This follows immediately from the formula (\ref{eq_locfinitesum}) and the fact that $\scrP$ is a subsheaf.
\end{proof}
We are now ready to prove that any submodule closed under finite sums can be uniquely extended to a sheaf of submodules. The idea is motivated by Lemma \ref{lem_localaredetbyglobal}. 
\begin{proposition} \label{prop_extensionofsubmodule}
    Let $\scrS$ be a sheaf of $\cifty_{\calM}$-modules. Suppose $P \subseteq \scrS(M)$ is a $\cifty_{\calM}(M)$-submodule. For each $U \in \Op(M)$, define a graded subset 
    \begin{equation}
    \begin{split}
        \scrP(U) := \{ s \in \scrS(U) \mid & \text{ for each } m \in U, \text{ there is } V \in \Op_{m}(U) \\
        & \text{ and } s' \in P, \text{such that  } s'|_{V} = s|_{V} \}.
    \end{split}
    \end{equation}
    Then $\scrP$ is a sheaf of $\cifty_{\calM}$-submodules of $\scrS$ and $P \subseteq \scrP(M)$. Moreover, one has $P = \scrP(M)$, iff $P$ is closed under locally finite sums.
\end{proposition}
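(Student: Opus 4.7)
The plan is to verify the sheaf and submodule structure of $\scrP$ step by step, and then tackle the two directions of the final equivalence separately. First I would check that each $\scrP(U)$ is a graded $\cifty_\calM(U)$-submodule of $\scrS(U)$: closure under addition of homogeneous elements is immediate by intersecting witnessing neighborhoods and using that $P$ is already a submodule. For $\cifty_\calM(U)$-linearity I would, given $f \in \cifty_\calM(U)$ and $s \in \scrP(U)$ with witness $s' \in P$ on some $V \in \Op_m(U)$, extend $f$ to a global function by multiplying with a bump function $\eta \in \cifty_\calM(M)$ supported in $U$ and equal to $1$ on a smaller neighborhood $W \subseteq V$ of $m$; then $(\eta \cdot f) \cdot s' \in P$ provides a witness for $f \cdot s$ on $W$. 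Restriction maps inherited from $\scrS$ clearly send $\scrP$ into $\scrP$ since witnesses restrict to witnesses, so $\scrP$ is at least a presheaf of $\cifty_\calM$-submodules.

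Next I would establish the sheaf axiom. Given a compatible family $\{s_\alpha\}_{\alpha \in I}$ with $s_\alpha \in \scrP(U_\alpha)$ on an open cover $\{U_\alpha\}_{\alpha \in I}$ of $U$, the sheaf property of $\scrS$ produces a unique $s \in \scrS(U)$ gluing the $s_\alpha$. For any $m \in U$, pick $\alpha$ with $m \in U_\alpha$ and apply the defining property of $\scrP(U_\alpha)$ to obtain $V \in \Op_m(U_\alpha) \subseteq \Op_m(U)$ and $s' \in P$ with $s'|_V = s_\alpha|_V = s|_V$. Hence $s \in \scrP(U)$, and $\scrP$ is a sheaf of $\cifty_\calM$-submodules of $\scrS$. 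The inclusion $P \subseteq \scrP(M)$ is then trivial: any $s' \in P$ witnesses itself on $V = M$.

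For the equivalence in the last sentence, one direction is a direct invocation of Lemma \ref{lem_sheafglobalclosedunderlocfinsums}: if $P = \scrP(M)$, then $P$ is the module of global sections of a sheaf of $\cifty_\calM$-submodules, hence closed under locally finite sums. The converse is the heart of the statement. Assuming $P$ is closed under locally finite sums, let $s \in \scrP(M)$; the defining property supplies for each $m \in M$ some $V_m \in \Op_m(M)$ and $s'_m \in P$ with $s'_m|_{V_m} = s|_{V_m}$. I would choose a partition of unity $\{\rho_\alpha\}_{\alpha \in J}$ subordinate to the cover $\{V_m\}_{m \in M}$, with $\supp(\rho_\alpha) \subseteq V_{m(\alpha)}$ for some selection $\alpha \mapsto m(\alpha)$. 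The family $\{\rho_\alpha \cdot s'_{m(\alpha)}\}_{\alpha \in J} \subseteq P$ is locally finite, so its sum lies in $P$ by hypothesis. The identity $\rho_\alpha \cdot s'_{m(\alpha)} = \rho_\alpha \cdot s$ holds globally, since both sides vanish outside $\supp(\rho_\alpha) \subseteq V_{m(\alpha)}$ and coincide on $V_{m(\alpha)}$; summing and using $\sum_\alpha \rho_\alpha = 1$ yields $s = \sum_\alpha \rho_\alpha \cdot s'_{m(\alpha)} \in P$.

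The main obstacle I expect is the careful bookkeeping with partitions of unity in the graded setting, especially the seemingly innocent identity $\rho_\alpha \cdot s'_{m(\alpha)} = \rho_\alpha \cdot s$ viewed as an equation of \emph{global} sections obtained from an equation that is only assumed to hold on the open set $V_{m(\alpha)}$. This is precisely the type of manipulation already used throughout Section 3 (for example in the proofs of Proposition \ref{eq_GVBinjectiveequivalent} and Lemma \ref{lem_sectionsarefingen}), and the results on graded partitions of unity from \cite{Vysoky:2022gm} make it fully rigorous. Everything else is a routine verification of the subsheaf and submodule axioms.
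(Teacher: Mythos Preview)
Your proof is correct and follows essentially the same approach as the paper: the bump-function trick for the $\cifty_\calM(U)$-module structure, the verification of the gluing axiom, and the partition-of-unity argument for $\scrP(M) \subseteq P$ all match the paper's proof nearly step for step. Your additional justification of the global identity $\rho_\alpha \cdot s'_{m(\alpha)} = \rho_\alpha \cdot s$ is a helpful bit of care that the paper elides.
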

\begin{proof}
    Let us divide the proof into several steps. 
    \begin{enumerate}
        \item $\scrP(U)$ is a $\cifty_{\calM}(U)$-submodule of $\scrS(U)$: It is easy to see that $\scrP(U)$ is a graded subspace. Let $s \in \scrP(U)$ and $f \in \cifty_{\calM}(U)$ be arbitrary. We need to check that $f \cdot s \in \scrP(U)$. For any given $m \in U$, there is $V \in \Op_{m}(U)$ and $s' \in P$, such that $s'|_{V} = s|_{V}$. 
        By shrinking $V$ if necessary, we can assume that $\ol{V} \subseteq U$. Pick a bump function $\eta \in \cifty_{\calM}(M)$ supported in $U$ and satisfying $\eta|_{V} = 1$, and let $f' := \eta \cdot f$. We have $f' \cdot s' \in P$ and $(f' \cdot s')|_{V} = (f \cdot s)|_{V}$. Hence $f \cdot s \in \scrP(U)$. 

        \item $\scrP$ is a sheaf of graded $\cifty_{\calM}$-modules: It is easy to see that $\scrP$ is a subpresheaf. Let $U \in \Op(M)$ and let $\{ U_{\alpha} \}_{\alpha \in I}$ be an open cover of $U$. To prove that it is a subsheaf, we must argue that whenever some $s \in \scrS(U)$ satisfies $s|_{U_{\alpha}} \in \scrP(U_{\alpha})$, then $s \in \scrP(U)$. Let $m \in U$ be arbitrary. There is thus some $\alpha \in I$, such that $m \in U_{\alpha}$. Since $s|_{U_{\alpha}} \in \scrP(U_{\alpha})$, there is some $V \in \Op_{m}(U_{\alpha})$ and $s' \in P$, such that $s'|_{V} = (s|_{U_{\alpha}})|_{V}$. The right-hand side can be written simply as $s|_{V}$. Since $V \in \Op_{m}(U)$ and $m \in U$ was arbitrary, this proves that $s \in \scrP(U)$. 

        \item The relation of $\scrP(M)$ to $P$: It is trivial to check that $P \subseteq \scrP(M)$. Suppose that $P$ is closed under locally finite sums. Let $s \in \scrP(M)$. For each $m \in M$, we thus have $V_{(m)} \in \Op_{m}(M)$ and $s'_{(m)} \in P$, such that $s'_{(m)}|_{V_{(m)}} = s|_{V_{(m)}}$. Let $\{ \rho_{(m)} \}_{m \in M}$ be a partition of unity subordinate to the open cover $\{ V_{(m)} \}_{m \in M}$. One can use it to write 
        \begin{equation}
            s = \sum_{m \in M} \rho_{(m)} \cdot s|_{V_{(m)}} = \sum_{m \in M} \rho_{(m)} \cdot s'_{(m)}|_{V_{(m)}} = \sum_{m \in M} \rho_{(m)} \cdot s'_{(m)}. 
        \end{equation}
        It is easy to check that $\{ \rho_{(m)} \cdot s'_{(m)} \}_{m \in M}$ is locally finite. Since $P$ is closed under locally finite sums, this shows that $s \in P$. Hence $\scrP(M) = P$. 

        The converse statement follows from Lemma \ref{lem_sheafglobalclosedunderlocfinsums}.
    \end{enumerate}
    This finishes the proof. 
\end{proof}
\begin{remark} \label{rem_sheafextension}
    Let $P \subseteq \scrS(M)$ be any $\cifty_{\calM}(M)$-submodule. Then a sheaf $\scrP \subseteq \scrS$ of $\cifty_{\calM}$-submodules satisfying $\scrP(M) = P$ exists, iff $P$ is closed under locally finite sums. Moreover, if it exists, it is unique. This follows immediately from the above proposition together with Lemma \ref{lem_localaredetbyglobal}. 
\end{remark}
Let us now turn our attention to the case where $P$ is finitely generated. 
\begin{lemma} \label{lem_wholesheaffingen}
    Let $P \subseteq \scrS(M)$ be a finitely generated $\cifty_{\calM}(M)$-submodule. Let $\scrP$ be the sheaf of $\cifty_{\calM}$-submodules obtained from $P$ as in Proposition \ref{prop_extensionofsubmodule}.

    Then $\scrP(U)$ is finitely generated for each $U \in \Op(M)$. 
\end{lemma}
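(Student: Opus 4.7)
The plan is to show that if $s_{1}, \ldots, s_{n} \in P$ is a finite generating set for $P$, then the restrictions $s_{1}|_{U}, \ldots, s_{n}|_{U}$ form a finite generating set for $\scrP(U)$ as a $\cifty_{\calM}(U)$-module. First I would verify that each $s_{i}|_{U}$ actually lies in $\scrP(U)$: this is immediate from Proposition \ref{prop_extensionofsubmodule}, since one can take $V := U$ and $s' := s_{i} \in P$ in the defining condition for every $m \in U$.

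The substantive step is to show that any $s \in \scrP(U)$ lies in the $\cifty_{\calM}(U)$-span of $\{ s_{i}|_{U} \}$. For each $m \in U$, use the definition of $\scrP(U)$ to pick $V_{(m)} \in \Op_{m}(U)$ and $s'_{(m)} \in P$ with $s'_{(m)}|_{V_{(m)}} = s|_{V_{(m)}}$. Expanding $s'_{(m)} = \sum_{i=1}^{n} g^{i}_{(m)} \cdot s_{i}$ with $g^{i}_{(m)} \in \cifty_{\calM}(M)$ gives the local representation $s|_{V_{(m)}} = \sum_{i} g^{i}_{(m)}|_{V_{(m)}} \cdot s_{i}|_{V_{(m)}}$. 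Next, choose a partition of unity $\{ \rho_{(m)} \}_{m \in U}$ on $U$ subordinate to the open cover $\{ V_{(m)} \}_{m \in U}$, which exists by the standard theory for graded manifolds (see \S 3.5 of \cite{Vysoky:2022gm}).

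The desired decomposition is then assembled by setting
\begin{equation*}
    f^{i} := \sum_{m \in U} \rho_{(m)} \cdot (g^{i}_{(m)}|_{U}) \in \cifty_{\calM}(U),
\end{equation*}
which is a locally finite sum thanks to the local finiteness of $\{ \rho_{(m)} \}$. The claim to verify is
\begin{equation*}
    \sum_{i=1}^{n} f^{i} \cdot s_{i}|_{U} \ = \ \sum_{m \in U} \rho_{(m)} \cdot \Big( \sum_{i=1}^{n} g^{i}_{(m)}|_{U} \cdot s_{i}|_{U} \Big) \ = \ \sum_{m \in U} \rho_{(m)} \cdot s \ = \ s,
\end{equation*}
where the middle equality uses the sheaf property: on $V_{(m)}$ the inner sum equals $s|_{V_{(m)}}$ by construction, while outside $\supp(\rho_{(m)}) \subseteq V_{(m)}$ both terms vanish, so they agree on an open cover of $U$.

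The main obstacle is really a bookkeeping one: one must justify the rearrangement of the locally finite double sum above and confirm that the auxiliary products $\rho_{(m)} \cdot \big(\sum_{i} g^{i}_{(m)}|_{U} \cdot s_{i}|_{U}\big)$ coincide with $\rho_{(m)} \cdot s$ on all of $U$, not merely on $V_{(m)}$. Both points are routine once the partition-of-unity formalism for graded manifolds recalled in \S 3.5 of \cite{Vysoky:2022gm} is invoked, so no genuinely new technique beyond what the paper has already used is required.
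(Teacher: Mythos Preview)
Your proof is correct and follows essentially the same route as the paper: both take a finite generating set $\{s_{1},\dots,s_{n}\}$ for $P$, use the defining property of $\scrP(U)$ to write $s|_{V_{(m)}}$ as a combination of the $s_{i}|_{V_{(m)}}$, and then patch these local expressions together with a partition of unity subordinate to $\{V_{(m)}\}$ to exhibit $s$ as a $\cifty_{\calM}(U)$-combination of the $s_{i}|_{U}$. Your additional remarks about verifying $s_{i}|_{U} \in \scrP(U)$ and about the bookkeeping of locally finite sums are sound but not treated as separate issues in the paper.
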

\begin{proof}
    Let $S = \{ s_{1}, \dots, s_{n}\}$ be a finite generating set for $P$. Let $s \in \scrP(U)$ for some given $U \in \Op(M)$. For each $m \in U$, we can thus find $V_{(m)} \in \Op_{m}(U)$ and $s'_{(m)} \in P$, such that $s'_{(m)}|_{V_{(m)}} = s|_{V_{(m)}}$. For each $m \in U$, we can find an $n$-tuple $f_{(m)}^{1}, \dots, f_{(m)}^{n} \in \cifty_{\calM}(M)$, such that 
    \begin{equation}
        s'_{(m)} = \sum_{i=1}^{n} f^{i}_{(m)} \cdot s_{i}.
    \end{equation}
    Let $\{ \rho_{(m)} \}_{m \in U}$ be a partition of unity subordinate to the open cover $\{ V_{(m)} \}_{m \in U}$ of $U$. Then
    \begin{equation}
    \begin{split}
        s = & \ \sum_{m \in U} \rho_{(m)} \cdot s|_{V_{(m)}} = \sum_{m \in U} \rho_{(m)} \cdot s'_{(m)}|_{V_{(m)}} = \sum_{m \in U} \rho_{(m)} \cdot ( \sum_{i=1}^{n} f^{i}_{(m)}|_{V_{(m)}} \cdot s_{i}|_{V_{(m)}}) \\
        = & \ \sum_{i=1}^{n} ( \sum_{m \in U} \rho_{(m)} \cdot f^{i}_{(m)}|_{U}) \cdot s_{i}|_{U}.
    \end{split}
    \end{equation}
    But this shows that $\{ s_{1}|_{U}, \dots, s_{n}|_{U}\}$ is a finite generating set for $\scrP(U)$.
\end{proof}
\begin{corollary} \label{cor_projclosed}
    Let $P \subseteq \scrS(M)$ be a finitely generated $\cifty_{\calM}(M)$-submodule. 
    
    Then $P$ is closed under locally finite sums.
\end{corollary}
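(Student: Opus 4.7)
The plan is to combine the characterization in Proposition \ref{prop_extensionofsubmodule} with the generation statement from Lemma \ref{lem_wholesheaffingen}. Specifically, I will show $P = \scrP(M)$, where $\scrP$ is the sheaf extension of $P$ constructed in Proposition \ref{prop_extensionofsubmodule}, and then invoke the final part of that proposition (the ``iff'' clause) to conclude that $P$ is closed under locally finite sums.

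First I would observe that $P \subseteq \scrP(M)$ trivially by construction (every $s \in P$ is locally itself). For the reverse inclusion $\scrP(M) \subseteq P$, I would apply Lemma \ref{lem_wholesheaffingen} with $U = M$: fixing a finite generating set $S = \{ s_{1}, \dots, s_{n} \}$ for $P$, the lemma (really, its proof) produces the finite generating set $\{ s_{1}|_{M}, \dots, s_{n}|_{M} \} = \{ s_{1}, \dots, s_{n} \}$ for $\scrP(M)$. Thus every $s \in \scrP(M)$ can be written as $s = \sum_{i=1}^{n} f^{i} \cdot s_{i}$ for some $f^{i} \in \cifty_{\calM}(M)$. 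Since each $s_{i} \in P$ and $P$ is a $\cifty_{\calM}(M)$-submodule of $\scrS(M)$, this forces $s \in P$, giving $\scrP(M) \subseteq P$.

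Hence $P = \scrP(M)$, and by the final clause of Proposition \ref{prop_extensionofsubmodule}, $P$ is closed under locally finite sums. I do not foresee a real obstacle here; the whole point is that the previous lemma's finite-generation result is doing the heavy lifting, and the appeal to Proposition \ref{prop_extensionofsubmodule} is a one-line bookkeeping move. The only thing to be slightly careful about is making explicit that the generating set for $\scrP(M)$ produced by Lemma \ref{lem_wholesheaffingen} consists of restrictions of the original generators of $P$, so that $\scrP(M)$ lands back inside $P$ rather than in some larger module.
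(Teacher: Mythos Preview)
Your proposal is correct and follows essentially the same approach as the paper's own proof: both argue that $\scrP(M) \subseteq P$ by invoking the proof of Lemma \ref{lem_wholesheaffingen} at $U = M$ to see that every element of $\scrP(M)$ is a $\cifty_{\calM}(M)$-linear combination of the original generators $s_{1}, \dots, s_{n} \in P$, and then use the ``iff'' clause of Proposition \ref{prop_extensionofsubmodule}. The paper is slightly terser but the logic is identical.
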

\begin{proof}
    It suffices to check that $\scrP(M) \subseteq P$, where $\scrP$ is the subsheaf obtained as in Proposition \ref{prop_extensionofsubmodule}. From the last step of the proof of the previous lemma for $U = M$, we see that every $s \in \scrP(M)$ can be written as a $\cifty_{\calM}(M)$-linear combination of elements of $P$. 
\end{proof}
As a next step, we will consider finitely generated projective modules.
\begin{proposition} \label{prop_fingenprojimpliesdirectsumsubsheaves}
    Let $P$ be a finitely generated projective $\cifty_{\calM}(M)$-module.

    Then there exists a trivial $\scrS \in \svbun$ and its sheaves $\scrP$ and $\scrQ$ of $\cifty_{\calM}$-submodules, such that $\scrS = \scrP \oplus \scrQ$, and $P \cong \scrP(M)$.
\end{proposition}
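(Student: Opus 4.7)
The plan is to realize $P$ as the image of a sheaf-level idempotent acting on a trivial GVB, leveraging that projective modules are precisely the direct summands of free ones. By Proposition \ref{prop_projectiveprops}, there exists a finite-dimensional graded vector space $K$ together with a splitting of $\cifty_{\calM}(M)$-modules $\cifty_{\calM}(M)[K] \cong P \oplus Q$. Set $\scrS := \cifty_{\calM}[K]$; this is a trivial GVB (see the proof of Proposition \ref{prop_localframecompl} or Lemma \ref{lem_trivialfree}), and its module of global sections is precisely $\cifty_{\calM}(M)[K]$, so we may identify $P$ with its image under the embedding into this free module.

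Next, I will use this splitting to produce an idempotent $e_{0}: \scrS(M) \rightarrow \scrS(M)$ of degree zero, namely the composition of the projection onto $P$ with the inclusion back into $\scrS(M)$, satisfying $e_{0}^{2} = e_{0}$, $\im(e_{0}) = P$, and $\ker(e_{0}) = Q$. By Proposition \ref{prop_GVBsoveridentity}, $e_{0}$ extends to a unique GVB morphism $E: \scrS \rightarrow \scrS$. The identity $E^{2} = E$ holds on global sections by construction, and the uniqueness clause of Proposition \ref{prop_GVBsoveridentity} (applied to $E^{2}$ and $E$) then forces it as a morphism of sheaves. Define
\begin{equation}
    \scrP := \ker(\1_{\scrS} - E), \qquad \scrQ := \ker(E),
\end{equation}
both of which are sheaves of $\cifty_{\calM}$-submodules of $\scrS$ since kernels of morphisms of sheaves of modules are subsheaves.

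Then I would verify the direct sum decomposition $\scrS = \scrP \oplus \scrQ$ open-set-wise via the idempotent identity. For any $U \in \Op(M)$ and any $s \in \scrS(U)$, write $s = E_{U}(s) + (s - E_{U}(s))$. Using $E_{U}^{2} = E_{U}$, the first summand is annihilated by $\1_{\scrS(U)} - E_{U}$, hence lies in $\scrP(U)$, while the second is annihilated by $E_{U}$, hence lies in $\scrQ(U)$. The intersection $\scrP(U) \cap \scrQ(U)$ is zero, for if $s$ lies in both then $s = E_{U}(s) = 0$. This gives $\scrS(U) = \scrP(U) \oplus \scrQ(U)$ for every $U$, which is exactly the direct sum decomposition of subsheaves.

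Finally, evaluating at $M$, we get $\scrP(M) = \ker(\1_{\scrS(M)} - e_{0}) = \im(e_{0}) = P$, where the middle equality is the standard fact that the fixed points of an idempotent coincide with its image. This identifies $P \cong \scrP(M)$ and completes the construction. I do not anticipate any serious obstacle here: the whole argument is driven by the formal observation that a module splitting corresponds to an idempotent, combined with the equivalence in Proposition \ref{prop_GVBsoveridentity} between $\cifty_{\calM}(M)$-module morphisms and sheaf-level GVB morphisms over the identity. The only mild subtlety is making sure that the sheaf-level identity $E^{2} = E$ is deduced from its global version, which is exactly where the uniqueness in Proposition \ref{prop_GVBsoveridentity} is used.
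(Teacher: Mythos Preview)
Your proof is correct but takes a genuinely different route from the paper. The paper constructs $\scrP$ and $\scrQ$ by explicitly extending the submodules $P, Q \subseteq \scrS(M)$ to subsheaves via Proposition \ref{prop_extensionofsubmodule}; this in turn requires the preparatory machinery of Lemma \ref{lem_wholesheaffingen} and Corollary \ref{cor_projclosed} (finitely generated submodules are closed under locally finite sums), and then a partition-of-unity argument to verify $\scrS(U) = \scrP(U) + \scrQ(U)$ on each open set, with the trivial intersection deduced from the uniqueness in Remark \ref{rem_sheafextension}. You instead encode the splitting as a global idempotent $e_{0}$, promote it to a sheaf endomorphism $E$ via Proposition \ref{prop_GVBsoveridentity}, and read off $\scrP$ and $\scrQ$ as kernels of $\1_{\scrS} - E$ and $E$; the direct sum decomposition and $\scrP(M) = P$ then follow formally from $E_{U}^{2} = E_{U}$. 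Your argument is considerably shorter and bypasses that entire block of auxiliary lemmas, at the cost of leaning more heavily on the correspondence in Proposition \ref{prop_GVBsoveridentity} (whose proof already contains the relevant partition-of-unity work). The paper's longer route has the advantage of making the subsheaf extension procedure explicit and reusable, but for the purposes of this proposition alone your idempotent argument is cleaner.
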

\begin{proof}
    By Proposition \ref{tvrz_projective}, $P$ is isomorphic to a direct summand of a free $\cifty_{\calM}(M)$-module $F$. By Proposition \ref{prop_projectiveprops}, we may assume that $F$ is finitely generated. Using the same argument as in the proof of Lemma \ref{lem_trivialfree}, the module $F$ is isomorphic to $\scrS(M)$ for a trivial $\scrS \in \svbun$. Without the loss of generality, we can thus assume that
    \begin{equation} \label{eq_PQdirectsum}
        \scrS(M) = P \oplus Q,
    \end{equation}
    for some projective $\cifty_{\calM}(M)$-module $Q$. Note that since $Q \cong \scrS(M)/P$, it is also finitely generated. By Proposition \ref{prop_extensionofsubmodule} together with Corollary \ref{cor_projclosed}, there are sheaves $\scrP$ and $\scrQ$ of $\cifty_{\calM}$-submodules of $\scrS$, such that $P = \scrP(M)$ and $Q = \scrQ(M)$. To finish the proof, one only has to argue that 
    \begin{equation}
    \scrS(U) = \scrP(U) \oplus \scrQ(U),
    \end{equation}
    for all $U \in \Op(M)$. Let $s \in \scrS(U)$ be arbitrary. For each $m \in U$, there exists $V_{(m)} \in \Op_{m}(U)$ and $s_{(m)} \in \scrS(M)$, such that $s_{(m)}|_{V_{(m)}} = s|_{V_{(m)}}$, see Lemma \ref{lem_localaredetbyglobal}. It follows from (\ref{eq_PQdirectsum}) that 
    \begin{equation}
        s_{(m)} = p_{(m)} + q_{(m)}
    \end{equation}
    for unique elements $p_{(m)} \in P$ and $q_{(m)} \in Q$. Let $\{ \rho_{(m)} \}_{m \in U}$ be a partition of unity subordinate to the open cover $\{ V_{(m)} \}_{m \in U}$. Then
    \begin{equation}
    \begin{split}
        s = & \ \sum_{m \in U} \rho_{(m)} \cdot s|_{V_{m}} = \sum_{m \in U} \rho_{(m)} \cdot s_{(m)}|_{V_{(m)}} = \sum_{m \in U} \rho_{(m)} \cdot ( p_{(m)}|_{V_{(m)}} + q_{(m)}|_{V_{(m)}}) \\
        = & \ \sum_{m \in U} \rho_{(m)} \cdot p_{(m)}|_{U} + \sum_{m \in U} \rho_{(m)} \cdot q_{(m)}|_{U}.
    \end{split}
    \end{equation}
    Observe that the collection $\{ \rho_{(m)} \cdot p_{(m)}|_{U} \}_{m \in U} \subseteq \scrP(U)$ is locally finite. Since $\scrP(U)$ is finitely generated by Lemma \ref{lem_wholesheaffingen}, it is closed under locally finite sums by Corollary \ref{cor_projclosed}. This implies that the first sum in $\scrP(U)$. Using the same reasoning, the second sum is in $\scrQ(U)$. This proves that 
    \begin{equation} \label{eq_SasPplusQ}
        \scrS(U) = \scrP(U) + \scrQ(U).
    \end{equation}
    Finally, note that $\scrP \cap \scrQ$ is a sheaf of $\cifty_{\calM}$-submodules of $\scrS$ satisfying $(\scrP \cap \scrQ)(M) = P \cap Q = 0$. Hence by the uniqueness assertion in Remark \ref{rem_sheafextension}, one has $\scrP \cap \scrQ = 0$ and the sum  (\ref{eq_SasPplusQ}) is direct. 
\end{proof}
Starting with a finitely generated projective $P$, we have constructed a sheaf $\scrP$ of $\cifty_{\calM}$-submodules of a trivial $\scrS \in \svbun$. It remains to prove that it is actually its subbundle, hence a GVB itself. To do so, we employ the following observation:
\begin{proposition} \label{prop_PandQaresubbundles}
    Let $\scrS \in \svbun$. Suppose that $\scrS = \scrP \oplus \scrQ$ for two its sheaves of $\cifty_{\calM}$-submodules. 

    Then both $\scrP$ and $\scrQ$ are actually subbundles of $\scrS$. 
\end{proposition}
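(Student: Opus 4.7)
The plan is to construct, around each point $m \in M$, a local frame for $\scrS$ whose first $p$ members lie in $\scrP$ and form a local frame for $\scrP$, and whose remaining members lie in and form a local frame for $\scrQ$. The key ingredients are the fibrewise decomposition induced by $\scrS = \scrP \oplus \scrQ$ and Proposition \ref{prop_localframecompl} (local frame completion), which turns collections of linearly independent fibre vectors into local frames.

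First I would show that the sheaf-level decomposition descends to fibres. Since stalks commute with direct sums, $\scrS_m = \scrP_m \oplus \scrQ_m$, and the Jacobson radical acts componentwise, so $\frJ_m \cdot \scrS_m = \frJ_m \cdot \scrP_m \oplus \frJ_m \cdot \scrQ_m$. Quotienting yields $\scrS_{(m)} = \scrP_{(m)} \oplus \scrQ_{(m)}$; both summands are therefore finite-dimensional graded vector spaces. Pick homogeneous bases $\theta_1, \dots, \theta_p$ of $\scrP_{(m)}$ and $\theta_{p+1}, \dots, \theta_r$ of $\scrQ_{(m)}$; together they form a basis of $\scrS_{(m)}$. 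A standard partition-of-unity argument (cf.\ Corollary 3.44 in \cite{Vysoky:2022gm}) yields surjectivity of the germ map $\scrP(M) \to \scrP_m$, valid for any sheaf of $\cifty_\calM$-modules; likewise for $\scrQ$. Hence I can select global sections $a_1, \dots, a_p \in \scrP(M)$ and $a_{p+1}, \dots, a_r \in \scrQ(M)$, all viewed as elements of $\scrS(M)$, such that $a_i|_m = \theta_i$.

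Applying Proposition \ref{prop_localframecompl} to $a_1, \dots, a_r$, I obtain $V \in \Op_m(M)$ such that $s_i := a_i|_V$ is a local frame of $\scrS$ over $V$, with $s_i \in \scrP(V)$ for $i \le p$ and $s_i \in \scrQ(V)$ for $i > p$. To verify that $s_1, \dots, s_p$ is a local frame of $\scrP|_V$: given $s \in \scrP(V)$, write uniquely $s = \sum_{i=1}^r f^i \cdot s_i$ in the $\scrS$-frame. Split into $s_\scrP := \sum_{i \le p} f^i \cdot s_i \in \scrP(V)$ and $s_\scrQ := \sum_{i > p} f^i \cdot s_i \in \scrQ(V)$. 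Both $s = s_\scrP + s_\scrQ$ and $s = s + 0$ are decompositions with respect to $\scrS(V) = \scrP(V) \oplus \scrQ(V)$, so uniqueness forces $s_\scrQ = 0$; linear independence of $s_{p+1}, \dots, s_r$ within the frame then gives $f^i = 0$ for all $i > p$. Uniqueness of the remaining coefficients is inherited from the ambient frame. The argument for $\scrQ$ is symmetric.

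The definition of subbundle requires a fixed index subset $J \subseteq \{1, \dots, r\}$, so one needs $\gdim \scrP_{(m)}$ (which determines which frame indices go into $J$) to be independent of $m$. The construction above shows that for every $m' \in V$ the vectors $s_i|_{m'}$, $i \le p$, form a basis of $\scrP_{(m')}$, so $\gdim \scrP_{(m')}$ is locally constant; by connectedness of $M$ (Remark \ref{rem_connected}) it is globally constant. After relabeling, one may take $J = \{1, \dots, p\}$, and the analogous conclusion for $\scrQ$ holds simultaneously. The only delicate point in the plan is the fibrewise decomposition $\scrS_{(m)} = \scrP_{(m)} \oplus \scrQ_{(m)}$, but this follows immediately from the distributivity of $\frJ_m$-multiplication over the direct sum of stalks; everything else is routine manipulation with frames, direct sums, and partitions of unity.
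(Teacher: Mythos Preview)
Your proof is correct and follows the same overall strategy as the paper: establish a fibrewise direct sum, lift bases of the two summands to sections of $\scrP$ and $\scrQ$, apply Proposition~\ref{prop_localframecompl} with $q = r$ to obtain a local frame adapted to both subsheaves, and invoke connectedness of $M$ to make the index set $J$ global.

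The genuine difference is in how the fibrewise decomposition is obtained. You pass through stalks: from $\scrS(U) = \scrP(U) \oplus \scrQ(U)$ for all $U$ one gets $\scrS_m = \scrP_m \oplus \scrQ_m$ by taking the filtered colimit, and then $\frJ_m$-multiplication distributes over the direct sum, so the quotient splits as $\scrS_{(m)} = \scrP_{(m)} \oplus \scrQ_{(m)}$ for free. The paper instead works with the images $\scrP_{[m]} := \{ s|_m \mid s \in \scrP(M) \} \subseteq \scrS_{(m)}$ and has to argue directly that $\scrP_{[m]} \cap \scrQ_{[m]} = 0$, which it does via Lemma~\ref{lem_fiber} and an explicit splitting of an expression $p - q \in \calJ_m \cdot \scrS(M)$; your route sidesteps this entirely. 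A second, smaller difference: the paper obtains constancy of $\gdim \scrP_{[m]}$ by a semicontinuity argument (both $\gdim \scrP_{[m]}$ and $\gdim \scrQ_{[m]}$ can only jump up near a point, but their sum is fixed equal to $\grk(\scrS)$), whereas you read local constancy off directly from the adapted frame you just built. Both arguments then finish with connectedness.
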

\begin{proof}
    For each $m \in M$, we can consider graded subspaces of the fiber $\scrS_{(m)}$ obtained by collecting the values of elements of $\scrP(M)$ and $\scrQ(M)$, that is 
    \begin{equation}
        \scrP_{[m]} := \{ s|_{m} \mid s \in \scrP(M) \}, \; \; \scrQ_{[m]} := \{ s|_{m} \mid s \in \scrQ(M) \}.
    \end{equation}
    We claim that one has a direct sum decomposition
    \begin{equation} \label{eq_fiberofSasfibersofPandQ}
        \scrS_{(m)} = \scrP_{[m]} \oplus \scrQ_{[m]}.
    \end{equation}
    Now, every element of $\scrS_{(m)}$ can be written as $s|_{m}$ for some $s \in \scrS(M)$. Since $s = p + q$ for $p \in \scrP(M)$ and $q \in \scrQ(M)$, we have $s|_{m} = p|_{m} + q|_{m}$. This proves that $\scrS_{(m)} = \scrP_{[m]} + \scrQ_{[m]}$. A difficult bit is to prove that the sum is direct. Suppose that there exist $p \in \scrP(M)$ and $q \in \scrQ(M)$, such that $p|_{m} = q|_{m}$. From the proof of Lemma \ref{lem_fiber}, it follows that 
    \begin{equation}
        p - q \in \calJ_{m} \cdot \scrS(M).
    \end{equation}
    There is thus a finite collection $\{ s_{i} \}_{i=1}^{k} \subseteq \scrS(M)$ and functions $\{ f^{i} \}_{i=1}^{k} \subseteq \calJ_{m}$, such that 
    \begin{equation} \label{eq_pminusq}
        p - q = \sum_{i=1}^{k} f^{i} \cdot s_{i}.
    \end{equation}
    For each $i \in \{1, \dots, k\}$, one can write $s_{i} = p_{i} + q_{i}$, where $p_{i} \in \scrP(M)$ and $q_{i} \in \scrQ(M)$. The equation (\ref{eq_pminusq}) can be then rewritten as 
    \begin{equation}
        p - \sum_{i=1}^{k} f^{i} \cdot p_{i} = q + \sum_{i=1}^{k} f^{i} \cdot q_{i}.
    \end{equation}
    Since $\scrP(M) \cap \scrQ(M) = 0$, both sides of this equation must vanish. This immediately implies
    \begin{equation}
        p|_{m} = \sum_{i=1}^{k} f^{i}(m) \; p_{i}|_{m} = 0,
    \end{equation}
    since $f^{i} \in \calJ_{m}$ and thus $f^{i}(m) = 0$. Hence $\scrP_{[m]} \cap \scrQ_{[m]} = 0$ and the sum (\ref{eq_fiberofSasfibersofPandQ}) is indeed direct. 

    Next, we claim that the sequence $\gdim(\scrP_{[m]})$ is constant in $m$. Fix some $m_{0} \in M$. One can choose a collection $\{ s_{i} \}_{i=1}^{k} \subseteq \scrP(M)$, such that $s_{1}|_{m_{0}}, \dots, s_{k}|_{m_{0}}$ is a basis for $\scrP_{[m_{0}]}$. It follows from Proposition \ref{prop_localframecompl} that there exists $U \in \Op_{m_{0}}(M)$, such that $s_{1}|_{m}, \dots, s_{k}|_{m}$ are linearly independent elements of $\scrP_{[m]}$ for all $m \in U$. Hence $\gdim(\scrP_{[m]}) \geq \gdim(\scrP_{[m_{0}]})$ for all $m \in U$. Similarly, one can find $U' \in \Op_{m_{0}}(M)$, such that $\gdim(\scrQ_{[m]}) \geq \gdim(\scrQ_{[m_{0}]})$, for all $m \in U'$. But thanks to (\ref{eq_fiberofSasfibersofPandQ}), there is a constraint
    \begin{equation}
        \grk(\scrS) = \gdim(\scrS_{(m)}) = \gdim(\scrP_{[m]}) + \gdim(\scrQ_{[m]}),
    \end{equation}
    for all $m \in M$. This shows that $\gdim( \scrP_{[m]}) = \gdim( \scrP_{[m_{0}]})$ for all $m \in U \cap U'$. Since $m_{0} \in M$ was arbitrary and $M$ is assumed connected\footnote{In view of Remark \ref{rem_connected}, this is the only point where we actually use this assumption.}, $\gdim( \scrP_{[m]})$ is the same for all $m \in M$.

    Let $m \in M$ be arbitrary. We seek a local frame for $\scrS$ adapted to both $\scrP$ and $\scrQ$. Find a collection $\{ a_{i} \}_{i=1}^{r} \subseteq \scrS(M)$ as follows:
    \begin{enumerate}
        \item $a_{1}, \dots a_{k} \in \scrP(M)$ and their values at $m$ form a basis of $\scrP_{[m]}$;
        \item $a_{k+1}, \dots, a_{r} \in \scrQ(M)$ and their values at $m$ form a basis of $\scrQ_{[m]}$.
    \end{enumerate}
    It follows from (\ref{eq_fiberofSasfibersofPandQ}) that $a_{1}|_{m}, \dots, a_{r}|_{m}$ is a basis of the fiber $\scrS_{(m)}$. By Proposition \ref{prop_localframecompl}, we there exists $V \in \Op_{m}(M)$ and a local frame $s_{1}, \dots, s_{r}$ for $\scrS$ over $V$, such that $s_{i} = a_{i}|_{V}$ for every $i \in \{1, \dots, r\}$. Let $U \in \Op_{m}(V)$ be arbitrary. Let us argue that $s_{1}|_{U}, \dots, s_{k}|_{U}$ generates $\scrP(U)$. Let $s \in \scrP(U)$ be arbitrary. Since $s_{1}, \dots, s_{r}$ is a local frame for $\scrS$, one can write
    \begin{equation}
        s = \sum_{i=1}^{r} f^{i} \cdot s_{i}|_{U} = \sum_{i=1}^{k} f^{i} \cdot s_{i}|_{U} + \sum_{i=k+1}^{r} f^{i} \cdot s_{i}|_{U}.
    \end{equation}
    for unique functions $f_{1}, \dots, f_{r} \in \cifty_{\calM}(U)$. The first sum is in $\scrP(U)$ and the second one is in $\scrQ(U)$, that is one has 
    \begin{equation}
        s - \sum_{i=1}^{k} f^{i} \cdot s_{i}|_{U} = \sum_{i=k+1}^{r} f^{i} \cdot s_{i}|_{U} \in (\scrP \cap \scrQ)(U) 
    \end{equation}
    It follows from the assumptions that both sides of the equation must be trivial. In particular, we see that $s_{1}|_{U}, \dots, s_{k}|_{U}$ indeed generates $\scrP(U)$, hence $s_{1}, \dots, s_{r}$ is adapted to $\scrP$. Using the same arguments, it is adapted to $\scrQ$. 

    Finally, since $\gdim( \scrP_{[m]})$  and $\gdim( \scrQ_{[m]})$ are constant in $m \in M$, we can repeat this procedure consistently around each point to prove that $\scrP$ and $\scrQ$ are subbundles of $\scrS$, see the first paragraph of Subsection \ref{sec_subbundles}. This finishes the proof.
\end{proof}

We are ready to prove the main statement of this subsection.
\begin{theorem}[Serre--Swan theorem for GVBs]\label{thm_SerreSwan}
Let $(\calM,P) \in \avbun$. 

Then there exists $\scrP \in \svbun$, such that $(\calM,P) \cong (\calM, \scrP(M)) \equiv \Upsilon(\scrP)$. In particular, the functor $\Upsilon$ is essentially surjective and the categories $\svbun$ and $\avbun$ are equivalent.
\end{theorem}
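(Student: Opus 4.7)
The plan is to combine Propositions \ref{prop_fingenprojimpliesdirectsumsubsheaves} and \ref{prop_PandQaresubbundles} with the already established full faithfulness of $\Upsilon$ (Proposition \ref{prop_upsilon1}). Given an algebraic GVB $(\calM,P) \in \avbun$, I first invoke Proposition \ref{prop_fingenprojimpliesdirectsumsubsheaves} to produce a trivial sheaf-defined GVB $\scrS \in \svbun$ and two sheaves of $\cifty_{\calM}$-submodules $\scrP,\scrQ \subseteq \scrS$ satisfying the direct sum decomposition $\scrS = \scrP \oplus \scrQ$ together with an isomorphism $P \cong \scrP(M)$ of $\cifty_{\calM}(M)$-modules.

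Next, I apply Proposition \ref{prop_PandQaresubbundles} to upgrade $\scrP$ from a mere sheaf of $\cifty_\calM$-submodules of $\scrS$ to an honest subbundle of $\scrS$. In particular, $\scrP$ is itself locally freely and finitely generated with constant graded rank, so $\scrP \in \svbun$. The isomorphism $P \cong \scrP(M)$ of $\cifty_{\calM}(M)$-modules then supplies an isomorphism $(\calM,P) \cong (\calM,\scrP(M)) = \Upsilon(\scrP)$ in $\avbun$, with underlying graded smooth map $\1_{\calM}$ and the $\avbun$-morphism datum obtained by transposing as explained after Definition \ref{def_avbun}. This establishes essential surjectivity of $\Upsilon$.

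Combining essential surjectivity with the full faithfulness proved in Proposition \ref{prop_upsilon1} yields the claimed equivalence $\svbun \simeq \avbun$.

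In this final assembly there is no real obstacle left; all the hard work has been front-loaded into the preceding lemmas. The genuinely delicate points are (i) extending a $\cifty_{\calM}(M)$-submodule to a subsheaf via Proposition \ref{prop_extensionofsubmodule} and ensuring it is finitely generated on every open set (Lemma \ref{lem_wholesheaffingen}), and (ii) the fiber-wise analysis in Proposition \ref{prop_PandQaresubbundles} guaranteeing that the graded dimension of the fiber $\scrP_{[m]}$ is constant in $m$ — the unique place where the connectedness assumption of Remark \ref{rem_connected} is actually used.
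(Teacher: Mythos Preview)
Your proof is correct and follows essentially the same route as the paper: invoke Proposition \ref{prop_fingenprojimpliesdirectsumsubsheaves} and Proposition \ref{prop_PandQaresubbundles} to produce $\scrP \in \svbun$ with $P \cong \scrP(M)$, interpret this module isomorphism as an isomorphism in $\avbun$ over $\1_{\calM}$, and conclude by combining essential surjectivity with the full faithfulness from Proposition \ref{prop_upsilon1}. Your added commentary on where the delicate points lie (Lemma \ref{lem_wholesheaffingen}, the constancy of $\gdim(\scrP_{[m]})$, and the role of connectedness) is accurate and matches the paper's own remarks.
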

\begin{proof}
    It follows from Proposition \ref{prop_fingenprojimpliesdirectsumsubsheaves} together with Proposition \ref{prop_PandQaresubbundles} that there exists $\scrP \in \svbun$, such that $P \cong \scrP(M)$. This isomorphism of $\cifty_{\calM}(M)$-modules can be interpreted as an isomorphism over $\1_{\calM}$ of $(\calM,P)$ and $\Upsilon(\scrP)$ in $\avbun$. This shows that $\Upsilon$ is essentially surjective. Since we know from Proposition \ref{prop_upsilon1} that is is fully faithful, it is an equivalence of categories. 
\end{proof}
\begin{remark}
    Note that this theorem directly contradicts Remark 1.3 in \cite{vysoky2022graded}. It turns out there was a rather silly computational error in a carefully crafted counterexample we had at a time. Its discovery drove us to the creation of this paper. 
\end{remark}
\subsection{Applications: Tensor product}
Let us utilize tools introduced in this paper to make one very useful observation. Recall that for $\scrS, \scrP \in \svbun$, their tensor product was defined in \cite{Vysoky:2022gm} as a sheafification
\begin{equation}
    \scrS \otimes_{\cifty_{\calM}} \scrP := \Sff( \scrS \otimes_{\cifty_{\calM}}^{P} \scrP),
\end{equation}
of the ``naive'' tensor product presheaf $\scrS \otimes_{\cifty_{\calM}}^{P} \scrP$. For each $U \in \Op(M)$, it is given by
\begin{equation}
    (\scrS \otimes_{\cifty_{\calM}}^{P} \scrP)(U) := \scrS(U) \otimes_{\cifty_{\calM}(U)} \scrP(U)
\end{equation}
with obvious presheaf restrictions. For tensor products of general sheaves of $\cifty_{\calM}$-modules, it may happen to \textit{not} be a sheaf. It is the aim of this subsection to show that for GVBs everything is fine. 

Recall that there is a canonical $\cifty_{\calM}$-linear presheaf morphism 
\begin{equation} \label{eq_sfftensorproducts}
    \sff: \scrS \otimes_{\cifty_{\calM}}^{P} \scrP \rightarrow \scrS \otimes_{\cifty_{\calM}} \scrP.
\end{equation}
We will prove that this is an isomorphism. Let us start with the following observation.
\begin{lemma} \label{lem_sfffortrivial}
    Suppose $\scrS$ and $\scrP$ are trivial GVBs. 
    
    Then $\sff$ is an isomorphism. 
\end{lemma}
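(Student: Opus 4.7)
The plan is to reduce the whole statement to the observation that for trivial GVBs the naive tensor product presheaf is already a sheaf, so sheafification introduces nothing new.

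First I would invoke Lemma \ref{lem_trivialfree} (in combination with Proposition \ref{prop_freeisAK}) to identify, up to isomorphism, $\scrS \cong \cifty_{\calM}[K]$ and $\scrP \cong \cifty_{\calM}[L]$ for finite-dimensional graded vector spaces $K, L \in \gvec$. Since $\sff$ is natural in both arguments and is an isomorphism iff it is an isomorphism after applying any isomorphism to $\scrS$ and $\scrP$, I may assume these identifications hold on the nose.

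Next I would compute the naive tensor product sectionwise. For each $U \in \Op(M)$, using the standard isomorphism of module-theoretic tensor products, one has
\begin{equation}
    \scrS(U) \otimes_{\cifty_{\calM}(U)} \scrP(U) \;=\; \bigl(\cifty_{\calM}(U) \otimes_{\R} K\bigr) \otimes_{\cifty_{\calM}(U)} \bigl(\cifty_{\calM}(U) \otimes_{\R} L\bigr) \;\cong\; \cifty_{\calM}(U) \otimes_{\R} (K \otimes_{\R} L),
\end{equation}
and this isomorphism is natural in $U$ with respect to the obvious restriction maps. Hence the presheaf $\scrS \otimes_{\cifty_{\calM}}^{P} \scrP$ is already canonically isomorphic, as a presheaf of $\cifty_{\calM}$-modules, to $\cifty_{\calM}[K \otimes_{\R} L]$.

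The key observation to close the argument is that $\cifty_{\calM}[K \otimes_{\R} L]$ is an honest sheaf, being a trivial GVB; this is precisely the sheaf appearing in Example 2.37 of \cite{Vysoky:2022gm} and already used in the proof of Proposition \ref{prop_localframecompl}. Because the sheafification functor $\Sff$ is a left adjoint to the inclusion of sheaves into presheaves, the canonical map $\sff$ from a presheaf which happens to already be a sheaf to its sheafification is an isomorphism. Therefore $\sff: \scrS \otimes_{\cifty_{\calM}}^{P} \scrP \rightarrow \scrS \otimes_{\cifty_{\calM}} \scrP$ is an isomorphism of sheaves of $\cifty_{\calM}$-modules. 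I do not foresee a genuine obstacle here; the only thing requiring a little care is checking that the pointwise isomorphism $\scrS(U) \otimes_{\cifty_{\calM}(U)} \scrP(U) \cong \cifty_{\calM}(U) \otimes_{\R} (K \otimes_{\R} L)$ is indeed natural in $U$, which is routine from the functoriality of the tensor product.
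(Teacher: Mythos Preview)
Your proposal is correct and follows essentially the same approach as the paper: reduce to $\scrS = \cifty_{\calM}[K]$ and $\scrP = \cifty_{\calM}[L]$, observe that the naive tensor product presheaf is canonically isomorphic to the sheaf $\cifty_{\calM}[K \otimes_{\R} L]$, and conclude that $\sff$ is an isomorphism because sheafification does nothing to an actual sheaf. You give slightly more justification for the reduction step and the final implication than the paper does, but the argument is the same.
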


\begin{proof}
    Without the loss of generality, we may assume that $\scrS = \cifty_{\calM}[K]$ and $\scrP = \cifty_{\calM}[L]$ for finite-dimensional graded vector spaces $K$ and $L$, respectively. See also the proof of Proposition \ref{prop_localframecompl}. It is not difficult to prove that there is an obvious $\cifty_{\calM}$-linear presheaf isomorphism
    \begin{equation}
        \cifty_{\calM}[K] \otimes^{P}_{\cifty_{\calM}} \cifty_{\calM}[L] \cong \cifty_{\calM}[K \otimes_{\R} L].
    \end{equation}
    This proves that the left-hand side is actually a sheaf, hence $\sff$ must be an isomorphism. 
\end{proof}

\begin{lemma} \label{lem_summandtrivial}
    Let $\scrS \in \svbun$. Then $\scrS$ is a direct summand of some trivial GVB $\scrF$. 
\end{lemma}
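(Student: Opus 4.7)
The plan is to leverage the machinery already developed: we have shown that the module of global sections of any GVB is finitely generated and projective (Lemmas \ref{lem_sectionsarefingen} and \ref{lem_sectionsareproj}), and Proposition \ref{prop_fingenprojimpliesdirectsumsubsheaves} takes any such algebraic object and realises it as an $M$-section module of a subsheaf of a trivial GVB sitting inside a direct-sum decomposition. Combined with Proposition \ref{prop_PandQaresubbundles}, this gives exactly the geometric splitting we want; all that remains is to translate the resulting module-level isomorphism back to the sheaf level.

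Concretely, I will proceed as follows. First, set $P := \scrS(M)$. By Lemma \ref{lem_sectionsarefingen} and Lemma \ref{lem_sectionsareproj}, $P$ is a finitely generated projective $\cifty_{\calM}(M)$-module, so $(\calM,P)$ lies in $\avbun$. Applying Proposition \ref{prop_fingenprojimpliesdirectsumsubsheaves} to this $P$ produces a trivial $\scrF \in \svbun$ together with sheaves $\scrP, \scrQ$ of $\cifty_{\calM}$-submodules of $\scrF$ such that
\begin{equation}
    \scrF = \scrP \oplus \scrQ \quad \text{and} \quad \scrP(M) \cong P = \scrS(M).
\end{equation}
By Proposition \ref{prop_PandQaresubbundles}, both $\scrP$ and $\scrQ$ are in fact subbundles of $\scrF$, hence GVBs in their own right. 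Finally, the $\cifty_{\calM}(M)$-module isomorphism $\scrS(M) \cong \scrP(M)$ extends uniquely to a GVB isomorphism $\scrS \cong \scrP$ via Proposition \ref{prop_GVBsoveridentity}. Therefore
\begin{equation}
    \scrS \oplus \scrQ \cong \scrP \oplus \scrQ = \scrF,
\end{equation}
exhibiting $\scrS$ as a direct summand of the trivial GVB $\scrF$.

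There is really no significant obstacle here: the conceptual work is entirely carried by the Serre--Swan-style construction in Proposition \ref{prop_fingenprojimpliesdirectsumsubsheaves} and the lifting Proposition \ref{prop_PandQaresubbundles}, which together furnish both the trivial bundle and the splitting. The only subtlety worth flagging is the passage from the \emph{module} isomorphism $\scrS(M) \cong \scrP(M)$ to the \emph{sheaf} isomorphism $\scrS \cong \scrP$, but this is exactly the content of Proposition \ref{prop_GVBsoveridentity} and requires no additional argument.
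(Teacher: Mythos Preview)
Your proof is correct, but it takes a different and somewhat more circuitous route than the paper. The paper simply points back to the proof of Lemma~\ref{lem_sectionsareproj}, where the split short exact sequence
\[
0 \longrightarrow \ker(F) \longrightarrow \cifty_{\calM}[K] \longrightarrow \scrS \longrightarrow 0
\]
already yields $\cifty_{\calM}[K] \cong \ker(F) \oplus \scrS$ directly at the sheaf level, with $\cifty_{\calM}[K]$ trivial; nothing further is needed. You instead pass to the global-sections module $P = \scrS(M)$, feed it through the Serre--Swan reconstruction (Propositions~\ref{prop_fingenprojimpliesdirectsumsubsheaves} and~\ref{prop_PandQaresubbundles}) to obtain a summand $\scrP$ inside a trivial $\scrF$, and then lift the module isomorphism $\scrS(M)\cong\scrP(M)$ back to a sheaf isomorphism via Proposition~\ref{prop_GVBsoveridentity}. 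This works, and as a byproduct it confirms that the functor $\Upsilon$ followed by the reconstruction recovers the original bundle up to isomorphism, but it invokes considerably more machinery than the one-line observation the paper uses.
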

\begin{proof}
    We have shown this in the proof of Lemma \ref{lem_sectionsareproj}.
\end{proof}
The proof of the following statement is based on the one of Theorem 7.5.5 in \cite{conlon2001differentiable}.
\begin{theorem}\label{thm_tensorProd}
    Let $\scrS$ and $\scrP$ be arbitrary GVBs. Then $\sff$ is an isomorphism. 

    In other words, the ``naive'' tensor product $\scrS \otimes_{\cifty_{\calM}}^{P} \scrP$ is already a sheaf, so we can canonically identify
    \begin{equation}
        (\scrS \otimes_{\cifty_{\calM}} \scrP)(U) \cong \scrS(U) \otimes_{\cifty_{\calM}(U)} \scrP(U),
    \end{equation}
    for each $U \in \Op(M)$. 
\end{theorem}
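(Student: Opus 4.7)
The plan is to reduce the statement to the trivial case provided by Lemma \ref{lem_sfffortrivial} by using Lemma \ref{lem_summandtrivial} together with the fact that both the presheaf tensor product and the sheafified tensor product distribute over direct sums, in a manner compatible with the sheafification morphism $\sff$.

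First, I would establish the key distributivity principle. For any $\scrS_{1}, \scrS_{2}, \scrP$ sheaves of $\cifty_{\calM}$-modules, there is a natural $\cifty_{\calM}$-linear presheaf isomorphism
\begin{equation}
    (\scrS_{1} \oplus \scrS_{2}) \otimes^{P}_{\cifty_{\calM}} \scrP \cong (\scrS_{1} \otimes^{P}_{\cifty_{\calM}} \scrP) \oplus (\scrS_{2} \otimes^{P}_{\cifty_{\calM}} \scrP),
\end{equation}
since the naive tensor product is defined section-wise and tensor products of ordinary graded modules distribute over direct sums. Since sheafification is a left adjoint, it commutes with finite direct sums, so the same decomposition holds for $\otimes_{\cifty_{\calM}}$. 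The naturality of $\sff$ in both arguments ensures that under these identifications $\sff_{\scrS_{1} \oplus \scrS_{2}, \scrP} = \sff_{\scrS_{1}, \scrP} \oplus \sff_{\scrS_{2}, \scrP}$, and similarly in the second argument.

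Second, by Lemma \ref{lem_summandtrivial} applied to both $\scrS$ and $\scrP$, I can choose GVBs $\scrS'$ and $\scrP'$ such that $\scrF := \scrS \oplus \scrS'$ and $\scrG := \scrP \oplus \scrP'$ are trivial. By Lemma \ref{lem_sfffortrivial}, the morphism $\sff_{\scrF, \scrG}: \scrF \otimes^{P}_{\cifty_{\calM}} \scrG \to \scrF \otimes_{\cifty_{\calM}} \scrG$ is an isomorphism of presheaves (equivalently, a sheaf isomorphism, since the left-hand side is then a sheaf).

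Third, applying the distributivity twice decomposes $\sff_{\scrF, \scrG}$ as the direct sum of the four morphisms $\sff_{\scrS_{i}, \scrP_{j}}$ for $i, j \in \{1,2\}$ (writing $\scrS_{1} = \scrS$, $\scrS_{2} = \scrS'$, etc.). A direct sum of presheaf morphisms is an isomorphism if and only if each summand is; hence each $\sff_{\scrS_{i}, \scrP_{j}}$, and in particular $\sff_{\scrS, \scrP}$, is an isomorphism. The second claim of the theorem then follows by evaluating this presheaf isomorphism on any $U \in \Op(M)$. The main technical point to double-check is the compatibility between the direct sum decomposition on the sheafified side and $\sff$, but this is immediate from the universal property of sheafification together with the naturality of $\sff$, so no serious obstacle arises beyond bookkeeping.
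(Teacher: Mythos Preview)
Your proposal is correct and follows essentially the same strategy as the paper: embed $\scrS$ and $\scrP$ as direct summands of trivial GVBs via Lemma \ref{lem_summandtrivial}, invoke Lemma \ref{lem_sfffortrivial} for the trivial case, and then use the direct-sum structure to conclude. The only difference is packaging: you appeal to distributivity of both tensor products over direct sums and the additivity of $\Sff$ to decompose $\sff_{\scrF,\scrG}$ as a fourfold direct sum, whereas the paper works more concretely with the inclusions $I_{1},I_{2}$ and projections $P_{1},P_{2}$, forming the commutative square
\begin{equation}
    \begin{tikzcd}
        \scrS \otimes^{P}_{\cifty_{\calM}} \scrP \arrow{d}{I_{1}\otimes^{P}I_{2}} \arrow{r}{\sff} & \scrS \otimes_{\cifty_{\calM}} \scrP \arrow{d}{I_{1}\otimes I_{2}} \\
        \scrF \otimes^{P}_{\cifty_{\calM}} \scrF' \arrow{r}{\sff'} & \scrF \otimes_{\cifty_{\calM}} \scrF'
    \end{tikzcd}
\end{equation}
and deducing injectivity and surjectivity of $\sff$ separately from the existence of one-sided inverses. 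Your abstract formulation is clean and perhaps more conceptual; the paper's version has the mild advantage that it never needs to state or verify distributivity of the \emph{sheafified} tensor product, only of the presheaf one, since it defines $I_{1}\otimes I_{2}$ directly via the universal property of sheafification rather than via a direct-sum identification.
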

\begin{proof}
    It follows from Lemma \ref{lem_summandtrivial} that there exist GVBs $\scrS^{\perp}$ and $\scrP^{\perp}$, such that $\scrF = \scrS \oplus \scrS^{\perp}$ and $\scrF' = \scrP \oplus \scrP^{\perp}$ are trivial GVBs. It follows from Lemma \ref{lem_sfffortrivial} that the corresponding canonical $\cifty_{\calM}$-linear presheaf morphism
    \begin{equation}
        \sff': \scrF \otimes_{\cifty_{\calM}}^{P} \scrF' \rightarrow \scrF \otimes_{\cifty_{\calM}} \scrF'
    \end{equation}
    is an isomorphism. Let $I_{1}: \scrS \rightarrow \scrF$ and $I_{2}: \scrP \rightarrow
     \scrF'$ denote the direct sum inclusions and $P_{1}: \scrF \rightarrow \scrS$ and $P_{2}: \scrF' \rightarrow \scrP$ the direct sum projections. One can form a commutative diagram
     \begin{equation} \label{eq_sffsffprimeCD}
         \begin{tikzcd}
             \scrS \otimes^{P}_{\cifty_{\calM}} \scrP \arrow{d}{I_{1} \otimes^{P} I_{2}} \arrow{r}{\sff} & \scrS \otimes_{\cifty_{\calM}} \scrP \arrow{d}{I_{1} \otimes I_{2}}\\
             \scrF \otimes^{P}_{\cifty_{\calM}} \scrF' \arrow{r}{\sff'} & \scrF \otimes_{\cifty_{\calM}} \scrF'
         \end{tikzcd},
     \end{equation}
     where $I_{1} \otimes^{P} I_{2}$ is the ``naive'' tensor product of the two sheaf morphisms defined in the obvious way, and the other vertical arrow $I_{1} \otimes I_{2}$ is in fact \textit{defined} to make this diagram commutative. The existence of such a sheaf morphism follows from the universal property of a sheafification functor.

     Now, since $P_{1} \circ I_{1} = \1_{\scrS}$ and $P_{2} \circ I_{2} = \1_{\scrP}$, it follows that $P_{1} \otimes^{P} P_{2}$ is a left inverse to $I_{1} \otimes^{P} I_{2}$, rendering it injective. Since $\sff'$ is an isomorphism, the commutativity of (\ref{eq_sffsffprimeCD}) immediately implies the injectivity of $\sff$. 
     
     Finally, by replacing (\ref{eq_sffsffprimeCD}) by a similar diagram using the projections $P_{1}$ and $P_{2}$ (with reversed vertical arrows), one can prove that $\sff$ is surjective. This finishes the proof. 
\end{proof}

\section{Geometric Approach}
\subsection{Definitions}

In this section we give the third approach to GVBs, which we call the geometric one. Unlike in previous sections, much of the work here will be done locally in coordinates. We begin by finding a suitable version of fiberwise linearity in the graded setting. Similarly as in \cite{bruceGrabowski2024}, we will start with ``fiberwise homogeneity" and show that linearity is implied.

Consider the graded manifold $\gR^{(r_j)}$ from Example \ref{example_graded_domain} with coordinates $\{x^i, \xi^\mu\} =: k^a$. One may reinterpret the addition of vectors and multiplication by a scalar $\lambda \in \R$ as graded smooth maps $\add : \gR^{(r_j)} \times \gR^{(r_j)} \to \gR^{(r_j)}$ and $ H_\lambda : \gR^{(r_j)} \to \gR^{(r_j)}$ in the following way; the underlying smooth maps $\uline{\add}$ and $\uline{H}_\lambda$ are the addition on $\R^{r_0}$ and multiplication by $\lambda$ on $\R^{r_0}$, respectively, and the pullbacks are given by
\begin{equation}\label{eq_additionMultiplication}
\add^\ast(k^a) = k_{(1)}^a + k_{(2)}^a, \qquad H_\lambda^\ast(k^a) = \lambda \, k^a.
\end{equation}
Here $\{k_{(1)}^a, k_{(2)}^b\}_{a,b = 1}^r$ are the coordinates on the product $\gR^{(r_j)} \times \gR^{(r_j)}$ arising as $k_{(\lambda)}^a := p_{\lambda}^\ast(k^a)$, where $p_{1,2}: \gR^{(r_{j})} \times \gR^{(r_{j})} \rightarrow \gR^{(r_{j})}$ are the respective projections. See \cite{Vysoky:2022gm} for details on products of graded manifolds. 

\begin{definition}\label{def_fiberwiseLin}
Consider two product manifolds $\calM \times \gR^{(r_j)}$ and $\calN \times \gR^{(s_j)}$, and let $\varphi : \calM \to \calN$ be a graded smooth map. A graded smooth map $\phi : \calM \times \gR^{(r_j)} \to \calN \times \gR^{(s_j)}$ is said to be \textbf{fiberwise linear} over $\varphi$, if
\begin{equation}\label{diag1}
\begin{tikzcd}
\calM \times \gR^{(r_j)}
\arrow[r, "{\phi}"]
\arrow[d, "{p_1}"]
	&\calN \times \gR^{(s_j)}
	\arrow[d, "{p_1}"]
	\\
\calM
\arrow[r, "{\varphi}"]
	& \calN
\end{tikzcd}
\end{equation}
commutes, and also
\begin{equation}\label{diag3}
\begin{tikzcd}
\calM \times \gR^{(r_j)}
\arrow[r, "{\phi}"]
\arrow[d, "{1 \times H_\lambda}"]
	& \calN \times \gR^{(s_j)}
	\arrow[d, "{1 \times H_\lambda}"]
	\\
\calM \times \gR^{(r_j)}
\arrow[r, "{\phi}"]
	& \calN \times \gR^{(s_j)}
\end{tikzcd}
\end{equation}  
commutes for any $\lambda \in \R$. 
\end{definition}

Here recall that when one deals with linear maps between finite-dimensional vector spaces, either two of the following three properties imply the third: smoothness, homogeneity and additivity. In ordinary differential geometry, the fact that smoothness and homogeneity imply additivity is known as Euler's Homogeneous Function Theorem, see e.g. \cite{grabowski2009higher} for a discussion of its importance and \cite[Proposition 4.4]{grabowskaGrabowski2024} for a $\Z_2^n$-graded version. In this light, we can view the following proposition as a $\Z$-graded version of the same theorem.

\begin{proposition}\label{prop_additivity}
Let $\calM, \calN$ be graded manifolds, $\R^{(r_j)}, \R^{(s_j)}$ finite-dimensional graded vector spaces, and let $\phi : \calM \times \gR^{(r_j)} \to \calN \times \gR^{(s_j)}$ be fiberwise linear over $\varphi : \calM \to \calN$. Then the diagram
\begin{equation}\label{diag2}
\begin{tikzcd}
\calM \times \gR^{(r_j)} \times \gR^{(r_j)}
\arrow[r, "{\phi \times_\varphi \phi}"]
\arrow[d, "{1 \times \add}"]
	& \calN \times \gR^{(s_j)} \times \gR^{(s_j)}
	\arrow[d, "{1 \times \add}"]
	\\
\calM \times \gR^{(r_j)}
\arrow[r, "{\phi}"]
	& \calN \times \gR^{(s_j)}
\end{tikzcd}
\end{equation}
commutes. (The meaning of $\phi \times_\varphi \phi$ is explained below.)
\end{proposition}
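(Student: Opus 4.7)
The plan is to verify the commutativity of \eqref{diag2} locally, chart by chart, by comparing the pullbacks of all coordinate functions on $\calN \times \gR^{(s_j)}$ under the two compositions in the square. In a product of charts, use coordinates $\{y^p, \eta^\nu\}$ on $\calN$, write $\{l^b\}$ for the fiber coordinates on $\gR^{(s_j)}$, and analogously $\{x^i, \xi^\mu, k^a\}$ on $\calM \times \gR^{(r_j)}$. Since $y^p$ and $\eta^\nu$ pull back through $p_1$, diagram \eqref{diag1} yields $\phi^*(p_1^*(y^p)) = p_1^*(\varphi^*(y^p))$ and similarly for $\eta^\nu$, so their contribution to \eqref{diag2} is trivial because neither $1 \times \add$ nor $\phi \times_\varphi \phi$ alters the base. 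Hence all the real content lies in the pullback of the fiber coordinates $l^b$.

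The first step is to expand
\begin{equation*}
    \phi^*(l^b) = \sum_{\mathbf{p}} g^b_{\mathbf{p}} \, k^{\mathbf{p}}
\end{equation*}
as a formal graded series in $k^a$ with coefficients $g^b_{\mathbf{p}}$ that are graded functions on $\calM$ (pulled back via $p_1$), where $\mathbf{p}$ runs over multi-indices satisfying $\sum_a p_a |k^a| = |l^b|$, exactly as in Example \ref{example_graded_domain}. The second step is to exploit \eqref{diag3}: the pullback $(1 \times H_\lambda)^*$ sends each $k^a$ to $\lambda k^a$ and fixes the $\calM$-coordinates, so applying the commutativity of \eqref{diag3} to $l^b$ gives
\begin{equation*}
    \sum_{\mathbf{p}} \lambda^{|\mathbf{p}|} g^b_{\mathbf{p}} \, k^{\mathbf{p}} = \lambda \sum_{\mathbf{p}} g^b_{\mathbf{p}} \, k^{\mathbf{p}},
\end{equation*}
where $|\mathbf{p}| := \sum_a p_a$. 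By uniqueness of the formal series expansion, this forces $(\lambda^{|\mathbf{p}|} - \lambda) \, g^b_{\mathbf{p}} = 0$ for every $\lambda \in \R$; since $\lambda^n - \lambda$ is a non-zero real polynomial whenever $n \neq 1$, we conclude $g^b_{\mathbf{p}} = 0$ unless $|\mathbf{p}| = 1$. Hence $\phi^*(l^b) = \sum_a g^b_a \, k^a$ is literally linear in the fiber coordinates --- the graded avatar of Euler's homogeneous function theorem.

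Given linearity, the diagram \eqref{diag2} is immediate. Let $k^a_{(i)}$ and $l^b_{(i)}$ for $i = 1, 2$ denote the fiber coordinates associated to the two factors of $\gR^{(r_j)}$ and $\gR^{(s_j)}$ respectively. Then $(1 \times \add)^*(l^b) = l^b_{(1)} + l^b_{(2)}$, and since $\phi \times_\varphi \phi$ pulls back $l^b_{(i)}$ to the expression for $\phi^*(l^b)$ with $k^a$ replaced by $k^a_{(i)}$, one computes
\begin{equation*}
    (\phi \times_\varphi \phi)^* (1 \times \add)^*(l^b) = \sum_a g^b_a \, k^a_{(1)} + \sum_a g^b_a \, k^a_{(2)} = (1 \times \add)^* \phi^*(l^b).
\end{equation*}
Combined with the trivial case of the base coordinates, this establishes \eqref{diag2}.

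The main obstacle I anticipate is purely bookkeeping: pinning down the meaning of $\phi \times_\varphi \phi$ on the level of coordinate pullbacks (to justify the substitution $k^a \rightsquigarrow k^a_{(i)}$) and checking that the coefficient-comparison for formal power series is valid in the graded setting of Example \ref{example_graded_domain}. Once these are handled, the argument reduces precisely to the graded Euler identity $\lambda^{|\mathbf{p}|} = \lambda$ that singles out the linear monomials.
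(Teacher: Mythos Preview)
Your overall strategy matches the paper's, but there is a genuine gap in the key step. You expand $\phi^*(l^b) = \sum_{\mathbf{p}} g^b_{\mathbf{p}}\, k^{\mathbf{p}}$ as a formal series in \emph{all} fiber coordinates $k^a$, with coefficients $g^b_{\mathbf{p}}$ pulled back from $\calM$. This is not how functions on $\calM \times \gR^{(r_j)}$ are built: by Example~\ref{example_graded_domain}, only the \emph{non-zero degree} coordinates $\kappa^\varkappa$ enter as formal power-series variables, while the degree-zero fiber coordinates $k^s$ sit inside the smooth-function coefficients $f_{\bld{p},\bld{q}}(x,k)$. A general graded function is therefore not a formal series in the degree-zero $k^s$, and your coefficient-comparison $(\lambda^{|\mathbf{p}|}-\lambda)\,g^b_{\mathbf{p}}=0$ is not available for those directions.

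Concretely, the homothety condition in those variables reads $\lambda\, f^A_{\bld{p},\bld{0}}(x,k) = f^A_{\bld{p},\bld{0}}(x,\lambda k)$, a statement about smooth functions of the real variables $k^s$. This is exactly the classical Euler homogeneous function theorem, and the paper extracts linearity by differentiating in $\lambda$ at $\lambda=0$. Your purely formal argument covers the $\kappa^\varkappa$ part correctly (and indeed the paper's cases $w(\bld{q})=1$ and $w(\bld{q})>1$ amount to your computation), but you must supplement it with this analytic step to handle the degree-zero fiber coordinates. Once you separate $k^a$ into $(k^s,\kappa^\varkappa)$ and treat the two pieces accordingly, the rest of your argument goes through as written.
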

\begin{proof}
Note that $\phi \times_{\varphi} \phi$ denotes the unique arrow fitting into the commutative diagram
\begin{equation}
\begin{tikzcd}[row sep=scriptsize, column sep=tiny]
{\calM \times \gR^{(r_j)} \times \gR^{(r_j)}}
\arrow[rr, "{(p_1, p_3)}"]
\arrow[dd, "{(p_1, p_2)}"]
\arrow[rd, "{\phi \times_\varphi \phi}", dashed]
	&{}
		&{\calM \times \gR^{(r_j)}}
		\arrow[dd, "{p_1}" yshift = -15 pt]
		\arrow[rd, "{\phi}"]
			&{} \\
{}
	&{\calN \times \gR^{(s_j)} \times \gR^{(s_j)}}
	\arrow[rr, "{(p_1, p_3)}" xshift = -20pt, crossing over]	
		&{} 
			&{\calN \times \gR^{(s_j)}}
			\arrow[dd, "{p_1}"] \\
{\calM \times \gR^{(r_j)}}
\arrow[rr, "{p_1}" xshift = -15pt]
\arrow[rd, "{\phi}"]
	&{}
		&{\calM} 
		\arrow[rd, "{\varphi}"]
			&{} \\
{}
	&{\calN \times \gR^{(s_j)}}
	\arrow[rr, "{p_1}" ]
	\arrow[uu, "{(p_1, p_2)}"' yshift = 15pt, leftarrow, crossing over]
		&{} 
			&{\calN}
\end{tikzcd},
\end{equation}
since the squares in the front and in the back of the cube are pullback squares in the category $\gman$. In particular, the underlying smooth map is
\begin{equation}
\uline{\phi \times_\varphi \phi}\,(m,v,w) \equiv \uline{\phi} \times_{\uline{\varphi}} \uline{\phi} (m,v,w)  = (\uline{\varphi}(m), p_1(\uline{\phi}(m, v)), p_2(\uline{\phi}(m,w))).
\end{equation}
Commutativity of the diagram (\ref{diag2}) can be verified locally, thus we can assume that there are global coordinates $\{x^i\}_{i = 1}^{m_0}, \{\xi^\mu\}_{\mu= 1}^{\tilde{m}}$ and $\{y^I\}_{I = 1}^{n}$ on $\calM$ and $\calN$, respectively. Furthermore, let $\{k^a\}_{a = 1}^{r_0}, \{\kappa^\varkappa\}_{\varkappa = 1}^{\tilde{r}}$ be the coordinates on $\gR^{(r_j)}$ and $\{\ell^A\}_{A = 1}^{s}$ on $\gR^{(s_j)}$, as in Example \ref{example_graded_domain}. Note that here we distinguish between zero and non-zero degree coordinates on $\calM$ and $\gR^{(r_j)}$, but not on $\calN$ and $\gR^{(s_j)}$, which is purely for convenience. This gives rise to the global coordinates $x^i, \xi^\mu, k_{(1)}^a, \kappa_{(1)}^\varkappa, k_{(2)}^a, \kappa_{(2)}^\varkappa$ on $\calM \times \gR^{(r_j)} \times \gR^{(r_j)}$ and  $y^I, \ell_{(1)}^A, \ell_{(2)}^A$ on $\calN \times \gR^{(s_j)} \times \gR^{(s_j)}$. Let us label the two paths through the diagram as $\zeta := (1 \times \add)\circ \phi \times_{\varphi} \phi$ and $\zeta^\prime := \phi \circ (1 \times \add)$. To show that $\zeta = \zeta^\prime$, it is sufficient \cite[Theorem 3.29]{Vysoky:2022gm} to show that pullbacks of the coordinates by $\zeta$ and $\zeta^\prime$ agree. First, one finds that
\begin{align}
    \zeta^\ast y^I &= (\phi^\ast \times_{\varphi} \phi)^\ast \, (1 \times \add)^\ast \, y^I = (\phi^\ast \times_{\varphi} \phi)^\ast \,  y^I = \varphi^\ast y^I, \\
    (\zeta^\prime)^\ast y^I &= (1 \times \add)^\ast \phi^\ast y^I = (1 \times \add)^\ast \varphi^\ast y^I = \varphi^\ast y^I,
\end{align}
where we used that $\phi^\ast y^I = \varphi^\ast y^I$, which follows from the commutativity of (\ref{diag1}). Now for the ``fiber" coordinates: one can write their pullback by $\phi$ in the general form
\begin{equation}\label{eq17}
    \phi^\ast(\ell^A) = \sum_{{(\bld{p}, \bld{q})} \in \N_{|\ell^A|}^{\tilde{m} + \tilde{r}}} f^A_{\bld{p}, \bld{q}}(x, k) \, \xi^{\bld{p}} \, \kappa^{\bld{q}},
\end{equation}
for any index $A$. One then uses diagram (\ref{diag3}) to find that for every $\lambda \in \R$, 
\begin{equation}
    \phi^\ast (1 \times H_\lambda)^\ast \ell^A = \lambda \, \phi^\ast \ell^A = \sum_{{(\bld{p}, \bld{q})} \in \N_{|\ell^A|}^{\tilde{m} + \tilde{r}}}  \lambda \,f^A_{\bld{p}, \bld{q}}(x, k) \, \xi^{\bld{p}} \, \kappa^{\bld{q}},
\end{equation}
equals
\begin{equation}
    (1 \times H_\lambda)^\ast \phi^\ast \, \ell^A = (1 \times H_\lambda)^\ast \sum_{{(\bld{p}, \bld{q})} \in \N_{|\ell^A|}^{\tilde{m} + \tilde{r}}} f^A_{\bld{p}, \bld{q}}(x, k) \, \xi^{\bld{p}} \, \kappa^{\bld{q}} = \sum_{{(\bld{p}, \bld{q})} \in \N_{|\ell^A|}^{\tilde{m} + \tilde{r}}} \lambda^{w(\bld{q})} \, f^A_{\bld{p}, \bld{q}}( x, \lambda  k) \, \xi^{\bld{p}} \, \kappa^{\bld{q}},
\end{equation}
where $w(\bld{q}) =  \sum_{\varkappa}q_\varkappa$ is the weight of the multiindex $\bld{q}$. We see that for any $(\bld{p}, \bld{q}) \in \N_{|\ell^A|}^{\tilde{m} + \tilde{r}}$ and any $\lambda \in \R$ there holds
\begin{equation}\label{eq2}
    \lambda \, f^A_{\bld{p}, \bld{q}}(x, k) = \lambda^{w(\bld{q})} \, f^A_{\bld{p}, \bld{q}}(x, \lambda k).
\end{equation}
In the case $\bld{q} = \bld{0}$, differentiating $(\ref{eq2})$ with respect to $\lambda$ and setting $\lambda = 0$ yields
\begin{equation}
    f^A_{\bld{p}, \bld{0}}(x,k) = \dd{f_{\bld{p},\bld{0}}^A}{k^a}(x, 0 ) \,  k^a.
\end{equation}
In the case $ w(\bld{q}) = 1$, doing the same yields
\begin{equation}
    f^A_{\bld{p}, \bld{q}}(x,k) = f_{\bld{p}, \bld{q}}^A(x,0),
\end{equation}
and in the case $w(\bld{q})>1$, we obtain
\begin{equation}
    f^{A}_{\bld{p}, \bld{q}}(x,k) = 0.
\end{equation}
Consequently, all the terms with $w(\bld{q})>1$ in the general expression (\ref{eq17}) are zero and the pullbacks of $\ell^A$ have the form
\begin{equation}
    \phi^\ast (\ell^A) = \Phi\indices{^A_a} \, k^a + \Phi\indices{^A_\varkappa} \, \kappa^\varkappa,
\end{equation}
 where $\Phi\indices{^A_a}, \Phi\indices{^A_\varkappa} \in \cifty_\calM(M)$. Explicitly,
 \begin{equation}
     \Phi\indices{^A_a} = \sum_{\bld{p} \in \N_{|\ell^A|}^{\tilde{m}}}\dd{f^A_{\bld{p, \bld{0}}}}{k^a}(x, 0) \, \xi^\bld{p}, \qquad \Phi\indices{^A_\varkappa} = \sum_{\bld{p} \in \N^{\tilde{m}}_{|\ell^A|-|\kappa^\varkappa|}}f_{\bld{p}, \bld{q}_{\varkappa}}^A(x,0) \, \xi^\bld{p}.
 \end{equation}
here $\bld{q}_{\varkappa}$ is the multiindex with $q_\varkappa = 1$ and other entries zero. This can be cast in a more elegant form by re-labeling the coordinates $k^a, \kappa^\varkappa \equiv k^\alpha$, leading to
\begin{equation}
    \phi^\ast(\ell^A) = \Phi\indices{^A_\alpha} \, k^\alpha,
\end{equation}
for some graded functions $\Phi\indices{^A_{\alpha}}$ on $\calM$. Returning back to $\zeta$ and $\zeta^\prime$, it is now easy to see that
\begin{equation}
    \zeta^\ast(\ell^A) = (\phi \times_\varphi \phi)^\ast (1 \times \add)^\ast \ell^A  =  (\phi \times_\varphi \phi)^\ast  (\ell_{(1)}^A  + \ell^A_{(2)}) =  \Phi\indices{^A_\alpha} \, k_{(1)}^\alpha + \Phi\indices{^A_\alpha } \,k^\alpha_{(2)},
\end{equation}
equals
\begin{equation}
    (\zeta^\prime)^\ast (\ell^A) = (1 \times \add)^\ast  \phi^\ast \ell^A = (1 \times \add)^\ast (\Phi\indices{^A_\alpha} \, k^\alpha) = \Phi\indices{^A_\alpha} \, (k^\alpha_{(1)} + k^\alpha_{(2)}),
\end{equation}
as was to be shown.
\end{proof}

Let us mention that in the non-graded case the commutativity of diagram (\ref{diag2}) is equivalent to the usual definition of the fiberwise additivity of $\phi$.

\begin{definition}[Category $\gvbun$]\label{def_GVB2}
\begin{itemize}
   \item Consider graded manifolds $\calE, \calM$ and a graded smooth map $\pi : \calE \to \calM$ together with a collection of graded smooth maps $\{H_\lambda^\calE\}$, $H^\calE_\lambda : \calE \to \calE$ such that $\pi \circ H_\lambda^\calE = \pi$. We say that $(\calE, \calM, \pi, \{H_\lambda^\calE\}_{\lambda \in \R})$ is a \textbf{graded vector bundle} over $\calM$, if there exists an open cover $\{U_\alpha\}_{\alpha \in I}$ of $M$ together with a collection of graded diffeomorphisms $\psi_\alpha : \rest{\calM}{U_\alpha} \times \gR^{(r_j)} \to \rest{\calE}{\uline{\pi}^{-1}(U_\alpha)}$, for some fixed $\gR^{(r_j)}$, such that the diagrams
    \begin{equation}\label{diag_triv1}
        \begin{tikzcd}
        \rest{\calM}{U_\alpha} \times \gR^{(r_j)}
        \arrow[r, "{\psi_\alpha}"]
        \arrow[rd, "{p_1}"']
            &\rest{\calE}{\uline{\pi}^{-1}(U_\alpha)}
            \arrow[d, "{\pi}"]
            \\
        {}
            & \rest{\calM}{U_\alpha}
        \end{tikzcd},
    \end{equation}
    and
    \begin{equation}\label{diag_triv2}
        \begin{tikzcd}
            \rest{\calM}{U_\alpha} \times \gR^{(r_j)}
            \arrow[r, "{\psi_\alpha}"]
            \arrow[d, "{1 \times H_\lambda}"']
                & \rest{\calE}{\uline{\pi}^{-1}(U_\alpha)}
                \arrow[d, "{H^\calE_\lambda}"]
                \\
            \rest{\calM}{U_\alpha} \times \gR^{(r_j)}
            \arrow[r, "{\psi_\alpha}"]
                & \rest{\calE}{\uline{\pi}^{-1}(U_\alpha)}
        \end{tikzcd}
    \end{equation}
    commute. For the meaning of $H_\lambda$ recall (\ref{eq_additionMultiplication}). Each $H_\lambda^\calE$ is called a \textbf{homothety map} on $\calE$, $\gR^{(r_j)}$ is called a \textbf{typical fiber} of $\calE$ and any collection $\{U_\alpha, \psi_\alpha\}_{\alpha \in I}$ satisfying the above conditions is called a \textbf{local trivialization} for $\calE$.

    \item Let $(\calE, \calM, \pi, \{ H_\lambda^\calE \}_{\lambda \in \R})$ and $(\calF, \calN, \varpi, \{ H_\lambda^{\calF} \}_{\lambda \in \R})$ be two GVBs, $\phi: \calE \to \calF$ and $\varphi : \calM \to \calN$ two graded smooth maps. We say that $\phi$ is \textbf{fiberwise linear} over $\varphi$ if $\varpi \circ \phi = \varphi \circ \pi$ and $\phi \circ H_\lambda^\calE = H_\lambda^{\calF} \circ \phi$ for every $\lambda \in \R$. A pair $(\varphi, \phi)$ where $\phi$ is fiberwise linear over $\varphi$ is called a GVB \textbf{morphism} $(\varphi, \phi) : \calE \to \calF$. The resulting category is denoted as $\gvbun$.
\end{itemize}
    \end{definition}

\begin{remark}\label{rem_GVBStuff}
\begin{enumerate}
    \item We define GVBs through global homothety maps $\{H_\lambda^\calE\}$. An equivalent approach has been taken before in \cite{bruceGrabowski2024} using a global Euler vector field, i.e. a vector field of degree $0$, that locally takes the form
    \begin{equation}
        E|_{\uline{\pi}^{-1}(U_\alpha)} = k^a\dd{}{k^a},
    \end{equation}
    where $k^a$ are the fiber coordinates. In contrast to Definition \ref{def_linearInFibers} below, functions linear in fibers would be defined as eigenfunctions of $E$ of eigenvalue 1, i.e. satisfying $Ef = f$. It is easy to see that locally such functions are exactly functions linear in fiber coordinates.
    \item Let $\calE, \calM$ be graded manifolds, $\pi : \calE \to \calM$ a graded smooth map and assume there is an open cover $\{U_\alpha\}_{\alpha \in I}$ of $M$ together with graded diffeomorphisms $\psi_\alpha$ such that the diagram (\ref{diag_triv1}) commutes for every $\alpha \in I$. One can consider the transition morphisms  $\psi_{\beta \alpha} : \rest{\calM}{U_{\alpha\beta}}\times \gR^{(r_j)} \to \rest{\calM}{U_{\alpha\beta}} \times \gR^{(r_j)}$ ,  $\psi_{\beta \alpha} := \psi_\beta^{-1} \circ \psi_\alpha $, suitably restricted. If these transition morphisms are fiberwise linear over $\1_\calM$, then one may construct a unique homothety $H_\lambda^\calE : \calE \to \calE$ for every $\lambda \in \R$ locally via the diagram (\ref{diag_triv2}). Indeed, fiberwise linearity then ensures agreement on overlaps. This is a common way of constructing graded vector bundles.
    \item Depending on context, we will denote geometrically defined GVBs as $(\calE, \calM, \pi, H_\lambda^\calE)$, $\pi : \calE \to \calM$ or simply as $\calE$.

\end{enumerate}
\end{remark}

\begin{example}[Pullback bundle]\label{ex_pullbackBundle}
Consider a graded vector bundle $\pi : \calE \to \calN$ and some graded smooth map $\varphi : \calM \to \calN$. Since $\pi$ is a submersion, which follows from diagram (\ref{diag_triv1}), there exists the fiber product graded manifold
\begin{equation}
    \begin{tikzcd}[row sep=scriptsize, column sep=scriptsize]
    \calM \times_{\varphi, \pi} \calE
    \arrow[r, "{\hat{\varphi}}"]
    \arrow[d, "{\hat{\pi}}"]
        & \calE
        \arrow[d, "{\pi}"] \\
    \calM
    \arrow[r, "{\varphi}"]
        & \calN,
    \end{tikzcd}
\end{equation}
where $\hat{\varphi}$ and $\hat{\pi}$ are just labels for the fiber bundle projections, see \cite[Proposition 7.52.]{Vysoky:2022gm} for details. Let us denote $\varphi^\ast \calE := \calM \times_{\varphi, \pi} \calE$ and argue that $\hat{\pi} : \varphi^\ast \calE \to \calM$ always carries a canonical structure of a graded vector bundle of the same rank as $\calE$, such that $(\varphi, \hat{\varphi})$ becomes a GVB morphism. We will call $\varphi^\ast \calE$ the \textbf{pullback bundle} of $\calE$ by $\varphi$. It will be clear that in the trivially graded case it is indeed the well known pullback bundle.

For every $\lambda \in \R$ we find the homothety map $H_\lambda^{\varphi^\ast \calE} : \varphi^\ast \calE \to \varphi^\ast \calE$ as the unique arrow fitting into the commutative diagram
\begin{equation}\label{diag6}
\begin{tikzcd}
{\varphi^\ast \calE}
\arrow[rrd, bend left  , "{H_\lambda \circ \hat{\varphi}}"]
\arrow[rdd, bend right , "{\hat{\pi}}"]
\arrow[rd, dotted, "{H_\lambda^{\varphi^\ast \calE}}"]
    &{}
        &{} \\
{}
    &{\varphi^\ast \calE}
    \arrow[d, "{\hat{\pi}}"]
    \arrow[r, "{\hat{\varphi}}"]
        &{\calE}
        \arrow[d, "{\pi}"] \\    
{}
    &{\calM}
    \arrow[r, "{\varphi}"]
        &{\calN} \\
\end{tikzcd}.
\end{equation}
Clearly $\hat{\pi} \circ H_\lambda^{\varphi^\ast\calE} = \hat{\pi}$. To show that $(\varphi^\ast \calE, \calM, \hat{\pi}, \{H_\lambda^{\varphi^\ast \calE}\}_{\lambda \in \R})$ is a GVB, we must find its local trivialization. Let $\{(V_\alpha, \psi_\alpha)\}_{\alpha \in I}$ be a local trivialization of $\calE$ and denote $U_\alpha := \uline{\varphi}^{-1}(V_\alpha)$, for every $\alpha \in I$. Note that $\{U_\alpha\}_{\alpha \in I}$ is an open cover of $M$ and that
\begin{equation}
\parest{\varphi^\ast \calE}{\uline{\hat{\pi}}^{-1}(U_\alpha)} = \parest{\calM \times_{\varphi, \pi} \calE}{\uline{\hat{\pi}}^{-1}(U_\alpha)} = \rest{\calM}{U_\alpha} \times_{\rest{\varphi}{U_\alpha}, \rest{\pi}{\uline{\pi}^{-1}(V_\alpha)}} \rest{\calE}{\uline{\pi}^{-1}(V_\alpha)},
\end{equation}
for every $\alpha \in I$. This allows us to define $\hat{\psi}_\alpha : \rest{\calM}{U_\alpha} \times \gR^{(r_j)} \to \rest{\varphi^\ast \calE}{\uline{\hat{\pi}}^{-1}(U_\alpha)}$ through the diagram
\begin{equation}\label{diag4}
\begin{tikzcd}[row sep=scriptsize, column sep=scriptsize]
{\rest{\calM}{U_\alpha} \times \gR^{(r_j)}}
\arrow[rd, dotted, "{\hat{\psi}_\alpha}" yshift = -1 pt]
\arrow[r, "{\varphi \times 1}"]
\arrow[rdd, "{p_1}", bend right]
	&{\rest{\calN}{V_\alpha} \times \gR^{(r_j)}}
	\arrow[rd, "{\psi_\alpha}", bend left=10]
		&{}\\
{}
	&{\rest{\varphi^\ast \calE}{\uline{\hat{\pi}}^{-1}(U_\alpha)}}
	\arrow[r, "{\hat{\varphi}}"]
	\arrow[d, "{\hat{\pi}}"]
		&{\rest{\calE}{\uline{\pi}^{-1}(V_\alpha)}}
		\arrow[d, "{\pi}"] \\
{}
	&{\rest{\calM}{U_\alpha}}
	\arrow[r, "{\varphi}"]
		&{\rest{\calN}{V_\alpha}}
\end{tikzcd}
\end{equation}
In fact, $\hat{\psi}_{\alpha}$ is a graded diffeomorphism whose inverse can be defined via $p_1 \circ \hat{\psi}_\alpha^{-1} := \hat{\pi}$ and $p_2 \circ \hat{\psi}_\alpha^{-1} := p_2 \circ \psi_\alpha^{-1} \circ \hat{\varphi}$, which can be verified by composing the relevant diagrams and using the universal property.

We claim that $\{(U_\alpha, \hat{\psi}_\alpha)\}_{\alpha \in I}$ is a local trivialization of $\varphi^\ast \calE$. According to Definition \ref{def_GVB2} one needs to verify that $\hat{\pi} \circ \hat{\psi}_\alpha = p_1$ and that $H_\lambda^{\varphi^\ast \calE} \circ \hat{\psi}_\alpha = \hat{\psi}_\alpha \circ (1 \times H_\lambda)$. The former is merely the left part of diagram (\ref{diag4}). As for the latter, one sees that
\begin{equation}
   \hat{\varphi} \circ \hat{\psi}_\alpha \circ (1 \times H_\lambda) \circ \hat{\psi}_\alpha^{-1} = \psi_\alpha \circ (\varphi \times 1) \circ (1 \times H_\lambda) \circ \hat{\psi}_\alpha^{-1} = H_\lambda^\calE \circ \psi_\alpha \circ (\varphi \times 1) \circ \hat{\psi}_\alpha^{-1} = H_\lambda^\calE \circ \hat{\varphi},
\end{equation}
and 
\begin{equation}
\hat{\pi} \circ \hat{\psi}_\alpha \circ (1 \times H_\lambda) \circ \hat{\psi}_\alpha^{-1} = p_1 \circ (1 \times H_\lambda) \circ \hat{\psi}_\alpha^{-1} = p_1 \circ \hat{\psi}_\alpha^{-1} = \hat{\pi},
\end{equation}
hence $\hat{\psi}_\alpha \circ (1 \times H_\lambda) \circ \hat{\psi}_\alpha^{-1}$ fits as the dotted arrow in (\ref{diag6}) and so must be equal to $H_\lambda^{\varphi^\ast \calE}$. That $(\varphi, \hat{\varphi}) : \varphi^\ast \calE \to \calE$ is a GVB morphism can also be read from diagram (\ref{diag6}).

Let us show that $\varphi^\ast \calE$ satisfies the following expected universal property of the pullback bundle: let $\varpi : \calF \to \calS$ be a GVB, then for any GVB morphism $(\theta, \Theta) : \calF \to \calE$ such that the underlying graded smooth map $\theta$ factors through $\calM$ as $\theta = \varphi \circ \xi $ for some $\xi : \calS \to \calM$, there exists a unique GVB morphism $(\xi, \Xi) : \calF \to \varphi^\ast \calE$ such that $\Theta = \hat{\varphi} \circ \Xi$. Once more we use the universality of the fiber product, and find $\Xi$ as the unique graded smooth map fitting into the diagram
\begin{equation}
    \begin{tikzcd}[column sep = 15pt, row sep = 11pt]
        {\calF}
        \arrow[rrd, bend left, "{\Theta}"]
        \arrow[d, "{\varpi}"']
        \arrow[rd, dotted, "{\Xi}"]
            &{}
                &{} \\
        {\calS}
        \arrow[rd, bend right, "{\xi}"]
            &{\varphi^\ast \calE}
            \arrow[r, "{\hat{\varphi}}"]
            \arrow[d, "{\hat{\pi}}"]
                &{\calE}
                \arrow[, d, "{\pi}"]\\
        {}
            &{\calM}
            \arrow[r, "{\varphi}"]
                &{\calN}
    \end{tikzcd},
\end{equation}
which commutes due to $\Theta$ being fiberwise linear over $\theta = \varphi \circ \xi$. One need only verify that such $\Xi$ is fiberwise linear over $\xi$. Immediately from the definition follows $\hat{\pi} \circ \Xi = \xi \circ \varpi$. Next we need to show $\Xi \circ H_\lambda^\calF = H_\lambda^{\varphi^\ast \calE} \circ \Xi$, but as these are both arrow into $\varphi^\ast \calE$, it is enough to show that they agree when post-composed with $\hat{\pi}$ and with $\hat{\varphi}$. The first is simple, as
\begin{equation}
    \hat{\pi} \circ \Xi \circ H_\lambda^\calF = \xi \circ \varpi \circ H_\lambda^\calF = \xi \circ \varpi = \hat{\pi} \circ \Xi = \hat{\pi} \circ H_\lambda^{\varphi^\ast \calE} \circ \Xi.
\end{equation}
As for the second, consider
\begin{equation}
    \begin{tikzcd}
        {\calF}
        \arrow[r, "{\Xi}"]
        \arrow[d, "{H_\lambda^\calF}"]
            &{\varphi^\ast \calE}
            \arrow[r, "{\hat{\varphi}}"]
            \arrow[d, "{H_\lambda^{\varphi^\ast \calE}}"]
                &{\calE}
                \arrow[d, "{H_\lambda^\calE}"]\\
        {\calF}
        \arrow[r, "{\Xi}"]
            &{\varphi^\ast \calE}
            \arrow[r, "{\hat{\varphi}}"]
                &{\calE}
    \end{tikzcd},
\end{equation}
where the right square commutes by definition of homothety maps on $\varphi^\ast \calE$ and the outer rectangle commutes due to $(\theta, \Theta)$ being a GVB morphism. As a result, $\hat{\varphi} \circ H_{\lambda}^{\varphi^\ast \calE} \circ \Xi = \hat{\varphi} \circ \Xi \circ H_\lambda^\calF$, which is the last piece of the puzzle.
\end{example}

\subsection{Relation to the Sheaf Definition}

This subsection is set out in two parts: first, we construct for every $\calE \in \gvbun$ its sheaf of sections $\Gamma_\calE$. Then we show that this assignment can be extended to a fully faithful functor $\Gamma : \gvbun \to \svbun$.

\begin{lemma}\label{lma_homothety_restriction}
Let $\calE \in \gvbun$. Then the pullback by the homothety map $(H_\lambda^\calE)^\ast$ can be restricted to a morphism of sheaves of $\cifty_\calM$-modules $(H_\lambda^\calE)^\ast : \pi_\ast \cifty_\calE \to \pi_\ast \cifty_\calE$.
\begin{proof}
    Recall that the pushforward sheaf $\pi_\ast\cifty_\calE$ is a sheaf on $M$ given by $(\pi_\ast\cifty_\calE)(U) = \cifty_\calE(\uline{\pi}^{-1}(U))$ for any $U \in \Op(M)$ with the $\cifty_\calM(U)$-module structure introduced by $f\cdot h := \pi^\ast(f) h$ for all $f\in \cifty_\calM(U)$ and $h \in \cifty_\calE(\uline{\pi}^{-1}(U))$. For any $U \in \Op(M)$, there is $(\uline{H}_{\lambda}^{\calE})^{-1}(\uline{\pi}^{-1}(U)) = \uline{\pi}^{-1}(U)$ hence $H_\lambda^\calE$ can be restricted to a sheaf morphism $\pi_\ast\cifty_\calE \to \pi_\ast \cifty_\calE$.  One has 
    \begin{equation}
        (H_\lambda^\calE)^{\ast}(f \cdot h) = (H_\lambda^\calE)^{\ast}( \pi^\ast(f) \, h) = (H_\lambda^\calE)^{\ast}( \pi^\ast(f)) \, (H_\lambda^\calE)^{\ast}(h) = \pi^\ast(f) \, (H_\lambda^\calE)^{\ast}(h) = f \cdot (H_\lambda^\calE)^{\ast}(h),
    \end{equation}
    i.e. $H_\lambda^\calE$ is a morphism of sheaves of $\cifty_\calM$-modules.
\end{proof}
\end{lemma}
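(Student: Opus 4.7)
The plan is to verify three things in order: (i) that $(H_\lambda^\calE)^\ast$ restricts to an endomorphism of each $(\pi_\ast\cifty_\calE)(U)$, (ii) that the resulting collection of maps is natural in $U$, and (iii) that each such map is $\cifty_\calM(U)$-linear.

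For (i), the essential point is that the defining condition $\pi\circ H_\lambda^\calE=\pi$ implies the equality $\ul{\pi}\circ\ul{H_\lambda^\calE}=\ul{\pi}$ of underlying smooth maps. Consequently, for every $U\in\Op(M)$,
\begin{equation*}
    (\ul{H_\lambda^\calE})^{-1}(\ul{\pi}^{-1}(U))=\ul{\pi}^{-1}(U).
\end{equation*}
The sheaf morphism part of $H_\lambda^\calE$ evaluated on the open set $\ul{\pi}^{-1}(U)$ therefore produces a graded algebra endomorphism
\begin{equation*}
    (H_\lambda^\calE)^\ast_{\ul{\pi}^{-1}(U)}:\cifty_\calE(\ul{\pi}^{-1}(U))\longrightarrow\cifty_\calE(\ul{\pi}^{-1}(U)),
\end{equation*}
which by definition of the pushforward is exactly a map $(\pi_\ast\cifty_\calE)(U)\to(\pi_\ast\cifty_\calE)(U)$. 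For (ii), naturality with respect to $V\subseteq U$ is inherited directly from the sheaf morphism axioms for $(H_\lambda^\calE)^\ast$, since the restriction on $\pi_\ast\cifty_\calE$ from $U$ to $V$ is by construction the restriction on $\cifty_\calE$ from $\ul{\pi}^{-1}(U)$ to $\ul{\pi}^{-1}(V)$.

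For (iii), the key observation is that $\pi\circ H_\lambda^\calE=\pi$ immediately gives $(H_\lambda^\calE)^\ast\circ\pi^\ast=\pi^\ast$, so pulling back functions of the form $\pi^\ast(f)$ is the identity. For $f\in\cifty_\calM(U)$ and $h\in(\pi_\ast\cifty_\calE)(U)$ one then computes, using that $(H_\lambda^\calE)^\ast$ is a morphism in $\gcas$,
\begin{equation*}
    (H_\lambda^\calE)^\ast(f\cdot h)=(H_\lambda^\calE)^\ast(\pi^\ast(f)\,h)=(H_\lambda^\calE)^\ast(\pi^\ast(f))\cdot(H_\lambda^\calE)^\ast(h)=\pi^\ast(f)\cdot(H_\lambda^\calE)^\ast(h)=f\cdot(H_\lambda^\calE)^\ast(h),
\end{equation*}
which is precisely the required $\cifty_\calM(U)$-linearity. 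There is no serious obstacle here; the whole argument is a careful unpacking of definitions, and the only thing worth being explicit about is the passage from the global condition $\pi\circ H_\lambda^\calE=\pi$ to the fiber-preservation property of $\ul{H_\lambda^\calE}$ that underlies step (i).
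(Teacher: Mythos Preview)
Your proof is correct and follows essentially the same approach as the paper: both use $\pi\circ H_\lambda^\calE=\pi$ to deduce $(\ul{H_\lambda^\calE})^{-1}(\ul{\pi}^{-1}(U))=\ul{\pi}^{-1}(U)$ for the restriction to $\pi_\ast\cifty_\calE$, and then the identity $(H_\lambda^\calE)^\ast\circ\pi^\ast=\pi^\ast$ together with multiplicativity of the pullback for the $\cifty_\calM$-linearity. Your version is slightly more explicit in separating out naturality in $U$ as its own step, which the paper leaves implicit.
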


\begin{definition}\label{def_linearInFibers}
    Let $\pi : \calE \to \calM$ be a graded vector bundle, $U \in \Op(M)$. Then a graded smooth function $f \in \cifty_\calE(\pi^{-1}(U))$ is called \textbf{linear in fibers} if $(H_\lambda^\calE)^\ast(f) = \lambda \, f$ for every $\lambda \in \R$. The graded vector space of all such functions $f$ is denoted as $\clin_\calE(U)$.
\end{definition}

\begin{proposition}\label{prop_clinLFFG}
    The assignment $U \mapsto \clin_\calE(U)$ defines a locally freely and finitely generated sheaf of $\cifty_\calM$-modules. 
\end{proposition}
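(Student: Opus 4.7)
The plan is to reduce the statement to a local coordinate computation, which has effectively already been carried out inside the proof of Proposition \ref{prop_additivity}.

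First, I would verify the sheaf-of-$\cifty_\calM$-modules structure. By Lemma \ref{lma_homothety_restriction}, each $(H_\lambda^\calE)^\ast$ is a morphism of sheaves of $\cifty_\calM$-modules on $\pi_\ast \cifty_\calE$, and $\clin_\calE(U)$ is defined as the intersection (over all $\lambda \in \R$) of the kernels of the $\cifty_\calM(U)$-linear maps $f \mapsto (H_\lambda^\calE)^\ast(f) - \lambda f$. Hence $\clin_\calE$ is a subsheaf of $\cifty_\calM$-submodules of $\pi_\ast \cifty_\calE$. The restriction maps, $\cifty_\calM$-action, and subsheaf-gluing all come for free from $\pi_\ast \cifty_\calE$.

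Next I would reduce to the product case. Given $m \in M$, pick a local trivialization $\psi_\alpha : \rest{\calM}{U_\alpha} \times \gR^{(r_j)} \to \rest{\calE}{\uline{\pi}^{-1}(U_\alpha)}$ as in Definition \ref{def_GVB2}. The pullback $\psi_\alpha^\ast$ is a $\cifty_\calM$-linear isomorphism of sheaves (once pushed to $M$) which, by diagram \eqref{diag_triv2}, intertwines the operator $(H_\lambda^\calE)^\ast$ with $(1 \times H_\lambda)^\ast$. It thus suffices to exhibit a finite frame for the sheaf of functions on $\rest{\calM}{U_\alpha} \times \gR^{(r_j)}$ satisfying $(1 \times H_\lambda)^\ast f = \lambda f$ for every $\lambda \in \R$.

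Working in coordinates $(x^i, \xi^\mu)$ on a coordinate open subset of $U_\alpha$ and fiber coordinates $(k^a, \kappa^\varkappa)$ on $\gR^{(r_j)}$, every graded function has a general expansion $f = \sum_{(\bld{p},\bld{q})} f_{\bld{p},\bld{q}}(x,k)\, \xi^{\bld{p}} \kappa^{\bld{q}}$ exactly as in \eqref{eq17}. The homothety condition becomes $\lambda^{w(\bld{q})} f_{\bld{p},\bld{q}}(x, \lambda k) = \lambda \, f_{\bld{p},\bld{q}}(x, k)$ for all $\lambda \in \R$, which is the very equation \eqref{eq2} analyzed in the proof of Proposition \ref{prop_additivity}. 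Repeating that case split (case $w(\bld{q})=0$: $f_{\bld{p},\bld{0}}$ is linear in $k$; case $w(\bld{q})=1$: $f_{\bld{p},\bld{q}}$ is independent of $k$; case $w(\bld{q})\geq 2$: $f_{\bld{p},\bld{q}}=0$) yields the normal form $f = \Phi_\alpha \, k^\alpha$, where $k^\alpha$ runs through \emph{all} fiber coordinates (of all degrees) and each $\Phi_\alpha \in \cifty_\calM(U)$ is uniquely determined by $f$ and has degree $|f|-|k^\alpha|$. Thus on any trivializing $U \subseteq U_\alpha$, the family $\{\psi_\alpha^{-1\ast}(k^\alpha)\}$ is a finite frame for $\clin_\calE|_U$, whose graded rank is precisely the graded dimension of the typical fiber $\gR^{(r_j)}$.

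The main obstacle is not conceptual but notational: keeping track of the parity-style split between degree-zero fiber coordinates $k^a$ (for which one argues by classical Euler-type differentiation in $\lambda$) and non-zero-degree fiber coordinates $\kappa^\varkappa$ (which are treated by the order-of-vanishing analysis in $\lambda$). Since this is exactly the bookkeeping already done to obtain the form $\phi^\ast(\ell^A) = \Phi\indices{^A_\alpha} k^\alpha$ in Proposition \ref{prop_additivity}, I would simply invoke that computation rather than re-derive it, and merely highlight that uniqueness of the coefficients $\Phi_\alpha$ (which gives the frame property) follows from uniqueness of the power-series expansion of graded functions.
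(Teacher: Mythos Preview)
Your proposal is correct and follows essentially the same approach as the paper: you identify $\clin_\calE$ as the intersection of kernels of the $\cifty_\calM$-linear maps $(H_\lambda^\calE)^\ast - \lambda$ via Lemma \ref{lma_homothety_restriction}, pass to a local trivialization using diagram \eqref{diag_triv2}, and then invoke the coordinate computation already performed in the proof of Proposition \ref{prop_additivity} to obtain the normal form $f = f_a \cdot k^a$, exhibiting the fiber coordinates as a local frame of graded rank $(r_{-j})_{j\in\Z}$. This is precisely the paper's argument.
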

\begin{proof}
    Note that $\clin_\calE$ is in fact the intersection of kernel subsheaves of $\pi_\ast\cifty_\calE$,
    \begin{equation}
        \clin_\calE = \bigcap_{\lambda \in \R} \ker((H_\lambda^\calE)^\ast - \lambda).
    \end{equation}
    By Lemma \ref{lma_homothety_restriction}, $(H_\lambda^\calE)^\ast - \lambda$ is a morphism of sheaves of $\cifty_\calM$-modules from $\pi_\ast\cifty_\calE$ to itself, hence $\clin_\calE$ is a sheaf of $\cifty_\calM$-modules. One needs to verify that it is locally freely and finitely generated of a constant rank.
    Let $\{U_\alpha\}_{\alpha \in I}$ be some coordinate open cover of $\calM$ for which there exists a trivialization $\{U_\alpha, \psi_\alpha\}_{\alpha \in I}$ for $\calE$. If $\{x^A\}$ are coordinates on $\rest{\calM}{U_\alpha}$ and $\{k^a\}$ some dual basis of $\R^{(r_j)}$, then on $\rest{\calE}{\uline{\pi}^{-1}(U_\alpha)}$ we have coordinates $\{x^A, k^a\}$ of the same name induced by the graded diffeomorphism
    \begin{equation}
    \psi_\alpha : \rest{\calM}{U_\alpha} \times \gR^{(r_j)} \to \rest{\calE}{\uline{\pi}^{-1}(U_\alpha)}.
    \end{equation}
    Let us show that for any $V\subseteq U_\alpha $ the functions linear in fibers $\clin_\calE(V)$ are exactly those of the form
    \begin{equation}\label{eq_flifLoc}
    f = f_a \cdot k^a \equiv \pi^\ast(f_a) \, k^a,
    \end{equation}
    for some $f_a \in \cifty_\calM(V)$. First, any $f \in (\pi_\ast\cifty_\calE)(V)$ of the form (\ref{eq_flifLoc}) satisfies $H_\calE^{\lambda, \ast}(f) = \lambda f$ since $(H_\lambda^\calE)^\ast \circ \pi^\ast = \pi^\ast$. Second, every $f \in (\pi_\ast \cifty_\calE)(V)$ is of the general form\footnote{Strictly speaking, this is true for a local representative of $f$, that is the pullback $(\chi^{-1}_{\alpha})^\ast f$ by the (inverse of the) local chart $\chi_\alpha : \calE|_{\uline{\pi}^{-1}(U_\alpha)} \to (\hat{U}_\alpha, \cifty_{(m_j)}|_{\hat{U}_\alpha})\times(\R^{r_0}, \cifty_{(r_j)})$, where $\chi_\alpha = (\varphi_\alpha \times 1) \circ \psi_\alpha^{-1}$ and $\varphi_\alpha : \calM|_{U_\alpha} \to (\hat{U}_\alpha, \cifty_{(m_j)}|_{\hat{U}_\alpha})$ is the chart for $\calM$ at $U_\alpha$ that we implicitly work with throughout the proof. To improve legibility of the proof, we decided not to include the coordinate charts explicitly. We hope a more punctual reader will forgive this transgression.}
    \begin{equation}
    f = \sum_{\bld{p}, \bld{q}} f_{\bld{p}, \bld{q}}(x, k) \xi^{\bld{p}} \, \kappa^{\bld{q}},
    \end{equation}
    where we split the coordinates $\{x^A, k^a\}$ into those of degree zero $\{x^i, k^s\}$ and those of non-zero degree $\{\xi^\mu, \kappa^\iota\}$.
    The requirement $(H_\lambda^\calE)^\ast f = \lambda \, f$ then translates to
    \begin{equation}
    \sum_{\bld{p}, \bld{q}} \lambda^{w(\bld{q})} \, f_{\bld{p}, \bld{q}}(x, \lambda  \,  k) \, \xi^{\bld{p}} \, \kappa^{\bld{q}} = \lambda \, \sum_{\bld{p}, \bld{q}} f_{\bld{p}, \bld{q}}(x,  k)\, \xi^{\bld{p}} \, \kappa^{\bld{q}} ,
    \end{equation}
    which, as we have already seen in the proof of Proposition \ref{prop_additivity}, leads to $f$ necessarily taking the form (\ref{eq_flifLoc}). Thus, we find that $\clin_\calE$ is locally freely and finitely generated of a graded rank $(r_{-j})_{j \in \Z}$.
\end{proof}

\begin{definition}
    For any $\calE \in \gvbun$ we define its \textbf{sheaf of sections} $\Gamma_\calE$ as the dual to the sheaf of functions linear in fibers,
    \begin{equation}
        \Gamma_\calE := (\clin_\calE)^\ast.
    \end{equation}
\end{definition}

\begin{theorem}\label{prop_main}
    The assignment $\Gamma : \calE \mapsto \Gamma_\calE$ extends to a fully faithful and essentially surjective functor $\Gamma : \gvbun \to \svbun$. Consequently, the categories $\gvbun$ and $\svbun$ are equivalent.
\end{theorem}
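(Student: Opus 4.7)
I would first extend $\Gamma$ to morphisms. Given $(\varphi,\phi)\colon\calE\to\calF$ in $\gvbun$, with $\calF$ carrying projection $\varpi$, the pushforward $\varpi_\ast\phi^\ast\colon\varpi_\ast\cifty_\calF\to\varphi_\ast\pi_\ast\cifty_\calE$ is well-defined because $\varpi\circ\phi=\varphi\circ\pi$. Fiberwise linearity $\phi\circ H^\calE_\lambda=H^\calF_\lambda\circ\phi$ yields $(H^\calE_\lambda)^\ast\phi^\ast=\phi^\ast(H^\calF_\lambda)^\ast$, so $\varpi_\ast\phi^\ast$ restricts to a degree-zero $\cifty_\calN$-module morphism $\Lambda\colon\clin_\calF\to\varphi_\ast\clin_\calE$, by the same argument as in Lemma \ref{lma_homothety_restriction}. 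Using the reflexivity identifications $\Gamma_\calE^\ast\cong\clin_\calE$ and $\Gamma_\calF^\ast\cong\clin_\calF$ (Proposition \ref{prop_projectiveprops}), this is precisely the data of a morphism $\Gamma_\calE\to\Gamma_\calF$ in $\svbun$. Functoriality is immediate.

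Faithfulness rests on a local coordinate argument in the spirit of Proposition \ref{prop_additivity}. On compatible trivializations, $\cifty_\calF$ is locally generated (as a graded algebra with formal-power-series completion in non-zero-degree coordinates) by pullbacks $\varpi^\ast(y^I)$ of base coordinates and by fiber coordinates $\ell^A\in\clin_\calF$. The identity $\phi^\ast\varpi^\ast=\pi^\ast\varphi^\ast$ fixes $\phi^\ast$ on the former, while $\Lambda$ fixes it on the latter; hence $\phi^\ast$ is determined by $(\varphi,\Lambda)$. For fullness I would, conversely, construct $\phi$ locally by prescribing $\phi^\ast_{\mathrm{loc}}(\varpi^\ast y^I):=\pi^\ast\varphi^\ast y^I$ and $\phi^\ast_{\mathrm{loc}}(\ell^A):=\Lambda(\ell^A)$ on each trivialization, then extending to the whole structure sheaf using the universal property that morphisms out of a graded domain are determined by their action on coordinates (Theorem 3.29 in \cite{Vysoky:2022gm}). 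The coordinate form $\Lambda(\ell^A)=\Phi^A_\alpha k^\alpha$ extracted in the proof of Proposition \ref{prop_additivity} guarantees this prescription is consistent; the sheaf-morphism property of $\Lambda$ makes the local pullbacks agree on overlaps; and fiberwise linearity $\phi\circ H^\calE_\lambda=H^\calF_\lambda\circ\phi$ follows at once from the linear form of $\phi^\ast(\ell^A)$ in the $k^\alpha$.

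Essential surjectivity is the main step. Given $\scrS\in\svbun$ of graded rank $(r_j)$ over $\calM$, choose an open cover $\{U_\alpha\}$ trivializing $\scrS$ with local frames $\{s^{(\alpha)}_i\}$ and invertible transition matrices $(T_{\alpha\beta})^i_j\in\cifty_\calM(U_{\alpha\beta})$ satisfying the cocycle relation. Put $\calE_\alpha:=\rest{\calM}{U_\alpha}\times\gR^{(r_j)}$ and define fiberwise linear graded diffeomorphisms $\psi_{\alpha\beta}\colon\rest{\calE_\beta}{U_{\alpha\beta}}\to\rest{\calE_\alpha}{U_{\alpha\beta}}$ by acting as the dual of $T_{\alpha\beta}$ on fiber coordinates and as the identity on base coordinates. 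The cocycle on $T_{\alpha\beta}$ transfers to a cocycle on the $\psi_{\alpha\beta}$, so the $\calE_\alpha$ glue in $\gman$ to a total graded manifold $\calE$ with projection $\pi\colon\calE\to\calM$. Because each transition is fiberwise linear over $\1_\calM$, the local homotheties $1\times H_\lambda$ agree on overlaps and produce a global family $\{H^\calE_\lambda\}$, see Remark \ref{rem_GVBStuff}(2); hence $\calE\in\gvbun$. By construction, $\{k^a_{(\alpha)}\}$ is a local frame for $\clin_\calE|_{U_\alpha}$ whose transitions are dual to $T_{\alpha\beta}$; dualizing, the corresponding local frame of $\Gamma_\calE(U_\alpha)=\clin_\calE^\ast(U_\alpha)$ transforms exactly by $T_{\alpha\beta}$, matching the transition law of $\{s^{(\alpha)}_i\}$. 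This matching of frames and cocycles yields the natural isomorphism $\Gamma_\calE\cong\scrS$.

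The principal obstacle is the gluing construction in essential surjectivity: verifying that the local charts $\calE_\alpha$ together with the $\psi_{\alpha\beta}$ genuinely assemble into a graded manifold in the sense of Section 2 (with the required Hausdorff, second-countable underlying space and admissible graded atlas) and that the local homotheties glue coherently into global diffeomorphisms $H^\calE_\lambda$. Once an appropriate gluing lemma for $\gman$ with fiberwise linear transitions is invoked or supplied, the remainder of the argument reduces to the transparent bookkeeping of transition cocycles and their duals, combining with the fully-faithful statement proved above to deliver the equivalence of categories.
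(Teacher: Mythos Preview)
Your proposal is correct and follows essentially the same route as the paper: restrict $\phi^\ast$ to $\clin_\calF$ to define the functor, reconstruct $\phi$ from $(\varphi,\Lambda)$ locally by prescribing pullbacks of base and fiber coordinates (invoking Theorem 3.29 of \cite{Vysoky:2022gm}) to obtain full faithfulness, and glue trivial pieces $\rest{\calM}{U_\alpha}\times\gR^{(r_j)}$ via the cocycle coming from local frames of $\scrS$ for essential surjectivity. The paper resolves the gluing obstacle you flag by citing Proposition 5.32 of \cite{Vysoky:2022gm}, and carries out the final frame-matching on the dual side (showing $\clin_\calE\cong\scrS^\ast$ rather than $\Gamma_\calE\cong\scrS$ directly), but the overall strategy is identical.
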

\begin{proof}
\textit{(Functor).} We know how $\Gamma$ acts on objects of the category $\gvbun$. Now, let $\pi : \calE \to \calM$ and $\rho : \calF \to \calN$ be two GVBs, and let $(\varphi, \phi) : \calE \to \calF$ be their morphism. Let us describe the morphism $\Gamma(\varphi, \phi) : \Gamma_\calE \to \Gamma_{\calF}$ in the category $\svbun$. Recall that $\Gamma(\varphi, \phi) = (\tilde{\varphi}, \Lambda)$ where $\tilde{\varphi} : \calM \to \calN$ is a graded smooth map and $\Lambda : \Gamma_{\calF}^\ast \to \tilde{\varphi}_\ast (\Gamma_\calE^\ast)$ is a morphism of sheaves of $\cifty_{\calN}$-modules. The obvious choice for the underlying graded smooth map is $\tilde{\varphi} = \varphi$. Notice that $\Gamma_{\calF}^\ast = \clin_{\calF}$ and $\varphi_\ast (\Gamma_\calE^\ast) = \varphi_\ast \, \clin_\calE$ by the canonical identification of the double dual. Let us argue that $\phi^\ast : \cifty_{\calF} \to \uline{\phi}_\ast \cifty_\calE$ can be restricted to $\phi^\ast : \clin_{\calF} \to \varphi_\ast \clin_\calE$.

For any $U \in \Op(N)$, observe that, due to fiberwise linearity of $\phi$,
\begin{equation}
\uline{\phi}^{-1}(\uline{\rho}^{-1}(U)) = (\uline{\rho \circ \phi})^{-1}(U) = \uline{\pi}^{-1}( \uline{\varphi}^{-1}(U)).
\end{equation}
Hence $\phi^\ast_{\uline{\rho}^{ -1}(U)}$ maps from $\cifty_{\calF}(\uline{\rho}^{-1}(U))$ to $\cifty_\calE(\uline{\pi}^{-1}(\uline{\varphi}^{-1}(U)))$. Let $f \in \clin_{\calF}(U) \subseteq \cifty_{\calF}(\uline{\rho}^{-1}(U))$ be some function linear in fibers. We see that
\begin{equation}
H^{\calE, \ast}_{\lambda}(\phi^\ast(f)) = \phi^\ast (H^{\calF, \ast}_\lambda(f)) = \phi^\ast (\lambda f) = \lambda \, \phi^\ast(f),
\end{equation}
hence $\phi^\ast(f)$ is also linear in fibers. Thus $\phi^\ast_{\uline{\rho}^{-1}(U)}$ can be restricted to a graded linear map from $\clin_{\calF}(U)$ to $\varphi_\ast \clin_\calE (U)$. For any $h \in \cifty_{\calF}(U)$ and any $f \in \clin_{\calF}(U)$ we have
\begin{equation}
\phi^\ast (h \cdot f) = \pi^\ast (\varphi^\ast(h)) \ \phi^\ast(f) = h \cdot \phi^\ast(f),
\end{equation}
where we use the definition of $\cifty_{\calN}$-module structure for $\clin_{\calF}$ and $\varphi_\ast \clin_{\calE}$, respectively. We are therefore able to set
\begin{equation}
\Gamma(\varphi, \phi) := (\varphi, \phi^\ast),
\end{equation}
with $\phi^\ast$ restricted to functions linear in fibers.

\textit{(Fully faithful).} Once again consider two GVBs $\pi : \calE \to \calM$ and $\rho : \calF \to \calN$ in $\gvbun$, and some arrow $(\varphi, \Lambda) : \Gamma_\calE \to \Gamma_\calF$ in $\svbun$. We aim to show the existence of a unique arrow $(\varphi, \phi) : \calE \to \calF$ in $\gvbun$ such that $\Gamma(\varphi, \phi) = (\varphi, \Lambda)$. In other words, we ask whether for any $\cifty_\calN$-linear morphism $\Lambda : \clin_\calF \to \varphi_\ast \clin_\calE$ there exists a unique fiberwise linear graded smooth map $\phi : \calE \to \calF$ such that the pullback $\phi^\ast$ agrees with $\Lambda$ when restricted to $\clin_\calF$. Let $\{U_\alpha, \psi_\alpha\}$ be a local trivialization for $\rho : \calF \to \calN$ and denote $V_\alpha := \uline{\rho}^{-1}(U_\alpha)$ and $W_\alpha := \uline{\pi}^{-1}\left( \uline{\varphi}^{-1}(U_\alpha) \right)$. Note that $\{V_\alpha\}_{\alpha \in I}$ and $\{W_\alpha\}_{\alpha \in I}$ are open covers for $F$ and $E$, respectively. Let $\gR^{(s_j)}$ be the typical fiber of $\calF$, and let $\{k^a\}$ be the graded coordinates therein. For every $\alpha\in I$ we now define a graded smooth map 
\begin{equation}
    \hat{\phi}_{\alpha} : \rest{\calE}{W_\alpha} \to \rest{\calN}{U_\alpha} \times \gR^{(s_j)}.
\end{equation}
As an arrow to a product, it is determined by its composition with the two projections. We set
\begin{equation}\label{eq_pro1}
    p_1 \circ \hat{\phi}_\alpha := \parest{\varphi \circ \pi}{W_\alpha},
\end{equation}
 and $p_2 \circ \hat{\phi}_\alpha : \rest{\calE}{W_\alpha} \to \gR^{(s_j)}$ is given by choosing its pullback of the graded coordinates $k^a$ on $\gR^{(s_j)}$ to be
\begin{equation}
    (p_2 \circ \hat{\phi}_\alpha)^{\ast}(k^a) := \Lambda((\psi_\alpha^{-1})^\ast (p_2^\ast k^a)).
\end{equation}
Indeed, from the proof of Proposition \ref{prop_clinLFFG} we know that the fiber coordinates $k^a$ are functions linear in fibers (in fact, their local generators) and hence we may consider 
\begin{equation}
    \Lambda((\psi_\alpha^{-1})^\ast (p_2^\ast k^a)) \in \clin_\calE(\uline{\varphi}^{-1}(U_\alpha)) \subseteq \cifty_\calE(W_\alpha).
\end{equation} 
By \cite[Theorem 3.29]{Vysoky:2022gm}, this is enough information to define $p_2 \circ \hat{\phi}_\alpha$, and hence $\hat{\phi}_\alpha$ itself. We set $\phi_\alpha := \psi_\alpha^{-1} \circ \hat{\phi}_\alpha$ to obtain $\phi_\alpha : \rest{\calE}{W_\alpha} \to \rest{\calF}{V_\alpha}$. We need to see whether the graded smooth maps $\phi_\alpha, \phi_\beta$ agree on overlaps, i.e. when restricted to graded smooth maps $\rest{\calE}{W_{\alpha \beta}} \to \rest{\calF}{V_{\alpha \beta}}$. From the definition (\ref{eq_pro1}) we see that
\begin{equation}
    p_1 \circ \psi_\beta \circ \phi_\beta = \varphi \circ \pi = p_1 \circ \psi_\alpha \circ \phi_\alpha = p_1 \circ \psi_{\alpha \beta} \circ \psi_\beta \circ \phi_\alpha = p_1 \circ \psi_\beta \circ \phi_\alpha,
\end{equation}
when restricted to $W_{\alpha\beta}$. Similarly
\begin{align}
    (p_2 \circ \psi_\beta \circ \phi_\beta)^\ast k^a &= \Lambda\left((\psi_\beta^{-1})^\ast (p_2^\ast k^a)\right) = \Lambda\left((\psi_\alpha^{-1})^\ast (\psi_{\beta \alpha}^\ast (p_2^\ast k^a))\right) = \Lambda\left((\psi_\alpha^{-1})^\ast  (\Psi_{\beta \alpha})\indices{^a_b} \, p_2^\ast k^b)\right) \nonumber \\
    &= (\Psi_{\beta \alpha})\indices{^a_b} \Lambda((\psi_\alpha^{-1})^\ast(p_2^\ast k^b)) = (\Psi_{\beta \alpha})\indices{^a_b} (p_2 \circ \psi_\alpha \circ \phi_\alpha)^\ast k^b = \phi_\alpha^\ast \psi_\alpha^\ast (\Psi_{\beta \alpha})\indices{^a_b} \, p_2^\ast  k^b \nonumber \\
    &= \phi_\alpha^\ast \psi_\beta^\ast p_2^\ast k^a = (p_2 \circ \psi_\beta \circ \phi_\alpha)^\ast \, k^a,
\end{align}
hence $p_2 \circ \psi_\beta \circ \phi_\beta = p_2 \circ \psi_\beta \circ \phi_\alpha$ and so $\phi_\beta = \phi_\alpha$, considering all restricted to $\rest{\calE}{W_{\alpha \beta}}$. By \cite[Proposition 2.6]{Vysoky:2022gm}, there exists a unique $\phi : \calE \to \calF$ such that $\rest{\phi}{W_{\alpha}} = \phi_\alpha$ for every $\alpha \in I$. By construction, $\phi$ is fiberwise linear over $\varphi$, and the pullback $\phi^\ast$ agrees with $\Lambda$ on functions linear in fibers. Clearly, any $\phi$ with these properties must have the form above, therefore $\Gamma$ is fully faithful.

\textit{(Essentially surjective).} This part of the proof both relies and expands upon \cite[Section 5.5]{Vysoky:2022gm}. Let $\calM \in \gman$ and $\scrS \in \svbun$ be a locally freely and finitely generated sheaf of $\cifty_\calM$-modules of a constant rank. We need to find a GVB $\pi : \calE \to \calM$ such that $\scrS \cong \Gamma_\calE$.

Consider some at most countable coordinate open cover $\{U_\alpha\}_{\alpha \in I}$ for $\calM$ for which there also exists a collection of $\rest{\cifty_\calM}{U_\alpha}$-linear sheaf isomorphisms
\begin{equation}
\Lambda_\alpha :  \rest{\cifty_\calM}{U_\alpha} \otimes \R^{(r_j)} \to \rest{\scrS}{U_\alpha},
\end{equation}
where $(r_j)$ is the graded rank of $\scrS$. Let us denote as
\begin{equation}
\Lambda_\alpha^\lor : \rest{\cifty_\calM}{U_\alpha} \otimes \left(\R^{(r_{j})}\right)^\ast \to \rest{\scrS}{U_\alpha}^\ast,
 \end{equation}
the isomorphism defined by $\left(\Lambda^\lor_\alpha (k^a)\right)(\Lambda_\alpha(k_b)) := \delta\indices{^a_b}$, for some basis $\{k_a\}$ of $\R^{(r_j)}$ and its corresponding dual basis $\{k^a\}$. The transition morphisms $\Lambda_{\beta \alpha}^\lor := (\Lambda^\lor_\beta)^{-1} \circ \Lambda^\lor_\alpha$ are given by
\begin{equation}
\Lambda_{\beta \alpha}^\lor (k^a) = (T^\lor_{\beta \alpha})\indices{^a_b} k^b,
\end{equation}
for some graded smooth functions $(T^\lor_{\beta \alpha})\indices{^a_b} \in \cifty_\calM(U_{\alpha \beta})$. For every $\alpha, \beta \in I$, we may use this to define a graded smooth map
\begin{equation}
\psi_{\beta \alpha} : \rest{\calM}{U_{\alpha \beta}} \times \gR^{(r_j)} \to \rest{\calM}{U_{\alpha \beta}} \times \gR^{(r_j)},
\end{equation}
by
\begin{equation}\label{eq_TransMorph}
p_1 \circ \psi_{\beta \alpha} = p_1, \qquad \text{and} \qquad (p_2 \circ \psi_{\beta \alpha})^\ast\,  k^a = (T^\lor_{\alpha \beta})\indices{^a_b} \ p_2^\ast \, k^b.
\end{equation}
Recall that here $\{k^a\}$ play the role of graded coordinates on $\gR^{(r_j)}$. From \cite[Proposition 5.32.]{Vysoky:2022gm} we learn of the existence of a graded manifold $\calE$ together with a graded smooth map $\pi : \calE \to \calM$ and a collection of graded diffeomorphisms
\begin{equation}
\psi_\alpha : \rest{\calM}{U_\alpha} \times \gR^{(r_j)} \to \rest{\calE}{\uline{\pi}^{-1}(U_\alpha)},
\end{equation}
such that $\pi \circ \psi_\alpha = p_1$ and $\psi_{\beta \alpha} = \psi_\beta^{-1} \circ \psi_\alpha$. Clearly the transition diffeomorphisms $\psi_{\beta \alpha}$ are fiber-wise linear over the identity, and so, by Remark \ref{rem_GVBStuff}, $\pi : \calE \to \calM$ has a unique structure of a GVB. It remains to be seen that $\Gamma_\calE \cong \scrS$ as sheaves of $\cifty_\calM$-modules. In fact, it is easier to show that $\Gamma_\calE^\ast \cong \scrS^\ast$. Since $\Gamma_\calE^\ast = \clin_\calE$, there is an isomorphism
\begin{equation}
\Xi_\alpha : \cifty_\calM(U_\alpha) \otimes \left(\R^{(r_{j})}\right)^\ast \to \clin_\calE(U_\alpha)
\end{equation}
for every $\alpha \in I$, given by
$\Xi_\alpha(f_a k^a) := f_a \cdot (\psi_\alpha^{-1})^{\ast}\,  k^a \equiv \pi^\ast(f_a) \, (\psi_\alpha^{-1})^{\ast}\,  k^a$. Therefore we may define the isomorphism 
\begin{equation}
\rho_\alpha : \scrS(U_\alpha)^\ast \to \clin_\calE(U_\alpha),
\end{equation}
by $\rho_\alpha := \Xi_\alpha \circ (\Lambda^\lor_\alpha)^{-1}$. One needs only check that $\rho_\alpha$ agree on overlaps, and then use \cite[Proposition 2.6.]{Vysoky:2022gm}. But agree they do, as
\begin{equation}
\left(\Lambda_\alpha^\lor\right)^{-1} \left( \Lambda_\beta^\lor \left( k^a \right) \right) = \Lambda_{\alpha \beta}^\lor (k^a) = (T^\lor_{\alpha \beta})\indices{^a_b} \, k^b,
\end{equation}
and
\begin{align}
\Xi_\alpha^{-1} \left( \Xi_\beta \left( k^a \right) \right) 
&= \Xi_\alpha^{-1} \left( (\psi_\beta^{-1})^{\ast}  \left(k^a \right) \right) 
= \Xi_\alpha^{-1} \left( (\psi_\alpha^{-1})^{\ast}  \left(\psi_{\beta\alpha}^\ast\left(k^a \right) \right) \right) 
=  \Xi_\alpha^{-1} \left( (\psi_\alpha^{-1})^{\ast}  \left((T^\lor_{\alpha \beta})\indices{^a_b} \, k^b \right) \right) \nonumber \\
&= (T^\lor_{\alpha \beta})\indices{^a_b} \,  \Xi_\alpha^{-1} \left( (\psi_\alpha^{-1})^{\ast} (  k^b )\right) = (T^\lor_{\alpha \beta})\indices{^a_b} \, k^b,
\end{align}
hence $\Xi_\alpha \circ (\Lambda_\alpha^\lor)^{-1}  = \Xi_\beta \circ (\Lambda_\beta^\lor)^{-1}$, when properly restricted. As a result, $\rho_\alpha$ glue together a sheaf morphism $\rho : \scrS^\ast \to \clin_\calE$. This morphism is locally, hence also globally, a $\cifty_\calM$-linear isomorphism.
\end{proof}

\begin{remark}\label{note2}
Let us discuss the ``transition matrices" $(T^\lor_{\alpha \beta})\indices{^a_b}$ from the above proof. Consider any isomorphism
\begin{equation}
\Lambda_\alpha :  \rest{\cifty_\calM}{U_\alpha} \otimes \R^{(r_j)} \to \rest{\scrS}{U_\alpha},
\end{equation}
and the dual isomorphism
\begin{equation}
\Lambda_\alpha^\lor : \rest{\cifty_\calM}{U_\alpha} \otimes \left(\R^{(r_{j})}\right)^\ast \to \rest{\scrS}{U_\alpha}^\ast
\end{equation}
given by $\left(\Lambda^\lor_\alpha (k^a)\right)(\Lambda_\alpha(k_b)) := \delta\indices{^a_b}$, where $\{k_a\}$ is a basis of $\R^{(r_j)}$ and $\{k^a\}$ is its dual. Also write $\Lambda_{\alpha \beta} := \Lambda_\alpha^{-1} \circ \Lambda_\beta$, $\Lambda^\lor_{\alpha \beta} := (\Lambda^\lor_\alpha)^{-1} \circ \Lambda^\lor_\beta$. There is
\begin{equation}
\Lambda_{\alpha \beta}(k_a) =  (T_{\alpha \beta} )\indices{_a^b} \, k_b, \qquad \Lambda_{\alpha \beta}^\lor(k^a) = (T^\lor_{\alpha \beta})\indices{^a_b} \, k^b,
\end{equation}
for some graded functions $(T_{\alpha \beta} )\indices{_a^b}$ and $(T^\lor_{\alpha \beta})\indices{^a_b}$.
We find that
\begin{align}\label{eq16}
\delta\indices{^a_b} &= \left(\Lambda_\alpha^\lor(k^a)\right) \left(\Lambda_{\alpha}(k_b) \right) 
= \left( \Lambda_\beta^\lor \left( \Lambda_{\beta \alpha}^\lor \left(k^a \right) \right) \right) \left( \Lambda_\beta \left( \Lambda_{\beta \alpha} \left(k_b \right) \right) \right) 
=  \left( \Lambda_\beta^\lor \left( (T_{\beta \alpha}^\lor)\indices{^a_c} \, k^c \right) \right) \left( \Lambda_\beta \left( (T_{\beta \alpha})\indices{_b^d} \, k_d \right) \right) \nonumber \\
&= (-1)^{|k^c|(|k_b| - |k_d|)} (T_{\beta \alpha}^\lor)\indices{^a_c} \, (T_{\beta \alpha})\indices{_b^d} \, \left( \Lambda_\beta^\lor \left(  k^c \right) \right) \left( \Lambda_\beta \left(  k_d \right) \right) = (-1)^{|k^c|(|k_b| - |k_c|)} (T_{\beta \alpha}^\lor)\indices{^a_c} \, (T_{\beta \alpha})\indices{_b^c}.
\end{align}
But from definition there holds
\begin{align}
(T^\lor_{\alpha \gamma})\indices{^a_c} k^c &= \Lambda_{\alpha \gamma}(k^a) = \Lambda_{\alpha \beta}\left(\Lambda_{\beta \gamma}(k^a)\right)) = \Lambda_{\alpha \beta} \left( (T^\lor_{\beta \gamma})\indices{^a_b}k^b \right) = (T^\lor_{\beta \gamma})\indices{^a_b} \, \Lambda_{\alpha \beta}\left(k^b \right) \nonumber \\
&= (T^\lor_{\beta \gamma})\indices{^a_b} \, (T^\lor_{\alpha_\beta})\indices{^b_c} \, k^c,
\end{align}
for any $\alpha, \beta, \gamma$, thus
\begin{equation}
(T^\lor_{\alpha \gamma})\indices{^a_c} = (T^\lor_{\beta \gamma})\indices{^a_b} \, (T^\lor_{\alpha_\beta})\indices{^b_c},
\end{equation}
and in particular
\begin{equation}
(T^\lor_{\alpha \beta})\indices{^d_a}\, (T_{\beta \alpha}^\lor)\indices{^a_c} = \delta\indices{^d_c}.
\end{equation}
Applying $(T^\lor_{\alpha \beta})\indices{^d_a}$ from the left to both sides of (\ref{eq16}) gives
\begin{equation}\label{eq111}
(T^\lor_{\alpha \beta})\indices{^d_b} = (-1)^{|k^d|(|k_b| - |k_d|)} (T_{\beta \alpha})\indices{_b^d}.
\end{equation}
When constructing a total space of a GVB from its sheaf of sections, what we need are $(T^\lor_{\alpha \beta})\indices{^a_b}$, but what we often have are $(T_{\alpha \beta})\indices{_b^a}$. It is therefore useful to know the relation (\ref{eq111}) between the two.
\end{remark}

\begin{example}[Tangent bundle once again]

As a sheaf of modules, the tangent bundle $\TM$ was shown Example \ref{example_VFs}. Let $\calM$ be a graded manifold of dimension $(n_j)$, so the rank of $\TM$ is $(r_j) = (n_{-j})$. Let $\{U_\alpha\}_{\alpha \in I}$ be some coordinate open cover with coordinates $\{x^i_\alpha\}$ on $\rest{\calM}{U_\alpha}$ for every $\alpha \in I$.  Fix some basis $\{k_i\}$ for $\R^{(n_{-j})}$ and without the loss of generality suppose that $|x^i_\alpha| = - |k_i|$ for every $\alpha \in I$. We know that vector fields $\dd{}{x^i_{\alpha}}$ form a frame for $\scrX_\calM(U_\alpha)$. Locally, we have an isomorphism $\Lambda_\alpha : \cifty_\calM (U_\alpha) \otimes_{\R} \R^{(n_{-j})} \to \scrX_\calM (U_\alpha)$ given by
\begin{equation}
\Lambda_\alpha(k_i) = \dd{}{x^i_\alpha}.
\end{equation}
Hence we obtain
\begin{equation}
\Lambda_{\beta \alpha}(k_i) = \Lambda_\beta^{-1}(\dd{}{x_\alpha^i}) = \Lambda_\beta^{-1}(\dd{x_\beta^j}{x_\alpha^i}\dd{}{x_\beta^j}) = \dd{x_\beta^j}{x_\alpha^i} \, k_j.
\end{equation}
In the notation of Remark \ref{note2} we have 
\begin{equation}
(T_{\beta \alpha})\indices{_i^j} = \dd{x_\beta^j}{x_\alpha^i}, \qquad \text{and} \qquad (T^\lor_{\beta \alpha})\indices{^i_j} = (-1)^{|x^i|(|x^i| - |x^j|)} \dd{x_\alpha^i}{x_\beta^j},
\end{equation}
where $|x^i| = |x_\alpha^i| = |x_\beta^i|$. We see that the transition matrices are Jacobi matrices, as expected. The corresponding graded vector bundle $\pi : \rmT \calM \to \calM$ has a local trivialization
\begin{equation}
\psi_\alpha : \rest{\calM}{U_\alpha} \times \gR^{(n_{-j})} \to \rest{\rmT \calM}{\uline{\pi}^{-1}(U_\alpha)}
\end{equation}
giving rise to coordinates $x_\alpha^i \equiv (\psi_\alpha^{-1})^\ast(p_1^\ast x_\alpha^i)$ and $k_\alpha^i \equiv (\psi_\alpha^{-1})^\ast(p_2^\ast k^i)$ on $\rest{\rmT \calM}{\uline{\pi}^{-1}(U_\alpha)}$. The transition morphisms
\begin{equation}
\psi_{\beta \alpha} : \rest{\calM}{U_{\alpha\beta}} \times \gR^{(n_{-j})} \to \rest{\calM}{U_{\alpha \beta}} \times \gR^{(n_{-j})}
\end{equation}
are given by $p_1 \circ \psi_{\beta \alpha} = p_1$ and $(p_2 \circ \psi_{\beta \alpha})^\ast(k^i) = (-1)^{|x^i|(|x^i| - |x^j|)} \, \dd{x_\alpha^i}{x_\beta^j} \, p_2^\ast k^j$, see (\ref{eq_TransMorph}). The coordinates $k_\alpha^i$ transform accordingly as
\begin{align}
k_\alpha^i &= (\psi_{\alpha}^{-1})^\ast(p_2^\ast k^i) 
= (\psi_{\beta}^{-1})^\ast(\psi_{\beta\alpha}^\ast(p_2^\ast k^i))  
= (-1)^{|x^i|(|x^i| - |x^j|)} \, \dd{x_\alpha^i}{x_\beta^j} \, (\psi_{\beta}^{-1})^\ast(p_2^\ast k^j) \nonumber \\
&=  (-1)^{|x^i|(|x^i| - |x^j|)} \, \dd{x_\alpha^i}{x_\beta^j} \, k_\beta^j,
\end{align}
which is not quite the transformation of coordinate 1-forms, but there is an extra sign factor.
\end{example}

\begin{remark}
    The sheaf of 1-forms on a graded manifold $\calM$ can be defined the same way as in the non-graded case as the dual of the sheaf of vector fields $\Omega^1_\calM := \scrX_\calM^\ast$. To maintain sign consistency, the exterior derivative of a graded function is defined as
    \begin{equation}
        \mathrm{d} f (X) := (-1)^{|f||X|} X(f),
    \end{equation}
    for any $f \in \cifty_\calM(U)$, $X \in \scrX_\calM(U)$ and $U \in \Op(M)$. Consequently, coordinate 1-forms are not generally dual to their respective coordinate vector fields, but there is an extra sign. Let us note that coordinate 1-forms transform as 
    \begin{equation}
    d x^i = (-1)^{|y^j|(|x^i| + 1)} \dd{x^i}{y^j} \, dy^j =  dy^j \, \dd{x^i}{y^j}.
    \end{equation}
\end{remark}

\subsection{Degree Shifts \& Sections}
For a graded vector space $V \in \gvec$, its \textbf{degree shift} by $\ell \in \Z$ is defined simply as $(V[\ell])_k := V_{\ell + k}$, in particular $\R^{(r_j)}[\ell] = \R^{(r_{j + \ell})}$. If $V$ is a graded $A$-module for some $A \in \gcas$, then so is $V[\ell]$. The concept of a degree shift extends naturally to sheaves of graded $\cifty_\calM$-modules by $\scrS[\ell](U):= \scrS(U)[\ell]$, for any $U \in \Op(M)$. In fact, we may consider degree shifting as a functor $\delta[\ell]$ from the category of sheaves of graded $\cifty_\calM$-modules onto itself. 

If a sheaf of graded $\cifty_\calM$-modules $\scrS$ is locally freely and finitely generated with rank $(r_j)$, then its degree shift $\scrS[\ell]$ is also locally f\&fg with rank $(r_{j + \ell})_{j \in \Z}$. Hence we may think of $\delta[\ell]$ as an endofunctor in $\svbun$. Using the functor $\Gamma : \gvbun \to \svbun$ we can shift degrees in $\gvbun$ (non-canonically) via a functor $\Gamma^{-1} \circ \delta[\ell] \circ \Gamma$, where $\Gamma^{-1}$ is any functor such that $\Gamma^{-1} \circ \Gamma \cong \id_{\gvbun}$ and $\Gamma \circ \Gamma^{-1} \cong \id_{\svbun}$.

In particular, we start with $\pi : \calE \to \calM$ of rank $(r_j)$ and its local trivialization $\{(U_\alpha, \psi_\alpha)\}_{\alpha \in I}$, construct $\Gamma_\calE$, shift it to $\Gamma_\calE[\ell]$ by some $\ell \in \Z$ and then run it through the procedure in the proof of Proposition \ref{prop_main} to find the total space $\pi_{[\ell]} : \calE[\ell] \to \calM$. We find that it comes with a local trivialization $\{(U_\alpha, \psi^{[\ell]}_\alpha)\}_{\alpha \in I}$ where $\psi^{[\ell]}_\alpha : \rest{\calM}{U_\alpha} \times \gR^{(r_{j + \ell})} \to \rest{\calE[\ell]}{\uline{\pi}_{[\ell]}^{-1}(U_\alpha)}$. 

Furthermore, let $\{k_a\}$ be some basis of $\R^{(r_j)}$ giving rise to the coordinates $\{k^a\}$ on $\gR^{(r_j)}$. We may consider the basis $\{k_a^{[\ell]}\}$ of $\R^{(r_j + \ell)}$ with the same basis vectors (now with shifted degrees), yielding the coordinates $\{k_{[\ell]}^a\}$ on $\gR^{(r_{j + \ell})}$ with degrees $|k_{[\ell]}^a| = |k^a| + \ell$. It then turns out that the transition morphisms on $\calE$ and on $\calE[\ell]$ will be given by the \textit{same transition matrices}, i.e.
\begin{equation}\label{eq_muchos}
    \psi_{\alpha \beta}^\ast k^a = (S_{\alpha \beta})\indices{^a_b} k^b \implies (\psi^{[\ell]}_{\alpha \beta})^\ast k_{[\ell]}^a = (S_{\alpha \beta})\indices{^a_b} k_{[\ell]}^b,
\end{equation}
where $(S_{\alpha \beta})\indices{^a_b} \in \cifty_\calM(U_{\alpha \beta})$. This corresponds to the intuitive notion that to obtain $\calE[\ell]$ from $\calE$, one takes the fiber coordinates on $\calE$ and simply deigns to increase their degree by $\ell$, while preserving the transition matrices.

Knowing how to shift the degrees of vector bundles allows us to answer the question of sections.

\begin{remark}\label{rmk_sheaf_of_maps}
    Let $\calM$ and $\calN$ be graded manifolds, $\varphi: \calM \to \calN$ a graded smooth function. Then we may consider its restriction $\rest{\varphi}{U} : \rest{\calM}{U} \to \rest{\calN}{V}$ for any $V \in \Op(N)$ such that $\uline{\varphi}(U) \subseteq V$. In particular, we may simply consider $\rest{\varphi}{U} : \rest{\calM}{U} \to \calN$, where for any $Q \in \Op(N)$ and any $f \in \cifty_\calN(Q)$ there is $(\rest{\varphi}{U})^\ast f := \parest{\varphi^\ast f}{U_\alpha \cap \uline{\varphi}^{-1}(Q)}$. It is not difficult to see that the assignment $U \mapsto \{\varphi : \rest{\calM}{U} \to \calN\}$ is a sheaf on $M$ valued in the category of sets. 
\end{remark}

\begin{theorem}\label{thm_sections_and_sections}
Let $\pi : \calE \to \calM$ be a graded vector bundle. For every $U \in \Op(M)$ and every $\ell \in \Z$ denote as $\scrS(U)_\ell$ the set of graded smooth functions $\sigma : \rest{\calM}{U} \to \calE[\ell]$ such that $\pi_{[\ell]} \circ \sigma = \id_{\calM|_{U}}$. Then $\scrS$ has a canonical structure of a sheaf of $\cifty_\calM$-modules, isomorphic to $\Gamma_\calE$.
\end{theorem}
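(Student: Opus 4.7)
The plan is to exhibit an isomorphism $\Phi : \scrS \to \Gamma_\calE$ of sheaves of sets directly via pullback along sections, and then use it to transport the $\cifty_\calM$-module structure of $\Gamma_\calE$ to $\scrS$. The sheaf-of-sets structure of $\scrS$ itself comes from Remark \ref{rmk_sheaf_of_maps}: restrictions of sections remain sections, and compatible families glue.

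The key preliminary is a canonical identification of sheaves of $\cifty_\calM$-modules
\[
    J : \clin_{\calE[\ell]} \xrightarrow{\cong} \clin_\calE[-\ell],
\]
which asserts that functions linear in fibers of $\calE[\ell]$ are, up to a degree shift by $\ell$, the same as those of $\calE$. On a trivializing chart $U_\alpha$, both sheaves are locally freely generated by frames of matching degree: $\{(\psi_\alpha^{[\ell]})^{-1,\ast} k^a_{[\ell]}\}$ on the left and $\{(\psi_\alpha^{-1})^\ast k^a\}$ on the right, both of degree $|k^a|+\ell$ after the indicated shift. By (\ref{eq_muchos}) they transform across overlaps under the \emph{same} transition matrices $(S_{\alpha\beta})\indices{^a_b}$, so the frame-by-frame identification extends to a global sheaf isomorphism.

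Given $\sigma \in \scrS(U)_\ell$, the equality $\pi_{[\ell]} \circ \sigma = \id_{\rest{\calM}{U}}$ yields $\uline{\sigma}^{-1}(\uline{\pi}_{[\ell]}^{-1}(U)) = U$, so $\sigma^\ast$ restricts to a degree-zero $\cifty_\calM(U)$-linear map $\clin_{\calE[\ell]}(U) \to \cifty_\calM(U)$; the linearity uses $\sigma^\ast \pi_{[\ell]}^\ast f = (\pi_{[\ell]} \circ \sigma)^\ast f = f$. Composing with $J^{-1}$ in the appropriate signed sense dictated by the $[-\ell]$ shift, I obtain an element $\Phi_U(\sigma) \in \Gamma_\calE(U)_\ell$, naturally in $U$. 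Bijectivity of $\Phi$ reduces to a local check: on $U \subseteq U_\alpha$, Theorem 3.29 of \cite{Vysoky:2022gm} implies that $\sigma$ is uniquely determined by the tuple $s^a := \sigma^\ast((\psi_\alpha^{[\ell]})^{-1,\ast} k^a_{[\ell]}) \in \cifty_\calM(U)$ with $|s^a| = |k^a|+\ell$, and conversely every such tuple arises from a unique $\sigma$; the $s^a$ are precisely the components of $\Phi_U(\sigma)$ in the local frame of $\Gamma_\calE(U)$ dual to $\{(\psi_\alpha^{-1})^\ast k^a\}$. Transporting the $\cifty_\calM$-module structure of $\Gamma_\calE$ along $\Phi$ then endows $\scrS$ with the sheaf-of-$\cifty_\calM$-modules structure demanded by the statement, canonical because $\Phi$ itself is defined without reference to any trivialization. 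The main obstacle will be the careful bookkeeping of degree shifts and Koszul signs in the construction of $J$, so that the composition $\sigma^\ast \circ J^{-1}$ genuinely satisfies the signed $\cifty_\calM(U)$-linearity convention of degree $\ell$; the local bijectivity is then a direct application of the universal property of products.
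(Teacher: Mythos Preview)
Your approach is correct and takes a genuinely different organizational route from the paper's proof. The paper works locally throughout: it represents each $\sigma$ by its coordinate components $\sigma_\alpha^a := \sigma_\alpha^\ast k^a_{[|\sigma|]}$, \emph{defines} the $\cifty_\calM$-module structure on $\scrS$ directly via the explicit formula $(f \cdot \sigma)_\alpha^a := (-1)^{|f||\sigma|}\sigma_\alpha^a\, f$, verifies associativity by hand, builds local frames $\delta_a^{(\alpha)}$ for $\scrS$, and then checks that the frame-matching map $\delta_a^{(\alpha)} \mapsto (-1)^{|k^a|} k_a^{(\alpha)}$ to $\Gamma_\calE$ glues across overlaps. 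You instead construct the bijection $\Phi$ first and globally---via $\sigma \mapsto \sigma^\ast|_{\clin_{\calE[\ell]}}$ followed by the degree-shift identification $J$---and then transport the module structure back along it. Your route gets the overlap compatibility of the isomorphism and the module axioms for free (they hold in $\Gamma_\calE$ and are transported), but concentrates all the delicate sign bookkeeping into the construction of $J$ and into the passage between degree-zero $\cifty_\calM$-linear maps on $\clin_\calE[-\ell]$ and degree-$\ell$ $\cifty_\calM$-linear maps on $\clin_\calE$; the paper's route makes every sign explicit at the cost of longer coordinate computations. Both arguments hinge on (\ref{eq_muchos}) at the same essential point: you use it to glue $J$, the paper uses it to glue the frame-matching isomorphism.
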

\begin{proof}
In view of Remark \ref{rmk_sheaf_of_maps} we find that for every $\ell \in \Z$, the assignment $U \mapsto \scrS(U)_\ell$ is a sheaf valued in $\Set$. Indeed, one must only verify that any graded smooth map $\sigma : \rest{\calM}{U} \to \calE[\ell]$ such that $\pi_{[\ell]} \circ \rest{\sigma}{U_\alpha} = \id$ for any open cover $\{U_\alpha\}_{\alpha \in I}$ satisfies $\pi_{[\ell]} \circ \sigma = \id$, but that is straightforward.

Next, let $\{U_\alpha, \psi_\alpha\}_{\alpha \in I}$ be a local trivialization for $\calE$ and $V \subseteq M$ an open subset and consider some $\sigma : \rest{\calM}{V} \mapsto \calE[|\sigma|]$. Also fix some basis $\{k_a\}$ of $\R^{(r_j)}$ where $(r_j)$ is the rank of $\calE$. For every $\alpha \in I$ denote $V_\alpha := V \cap U_\alpha$ and
\begin{equation}\label{eq4}
    \sigma_\alpha := \left(\psi_\alpha^{[ |\sigma| ]}\right)^{-1} \circ \rest{\sigma}{V_\alpha},
\end{equation}
i.e. $\sigma_\alpha : \calM|_{V_\alpha} \to \rest{\calM}{V_\alpha} \times \gR^{(r_{j + |\sigma|})}$. According to the first paragraph, $\sigma$ is fully determined by the collection $\{\sigma_\alpha\}_{\alpha \in I}$. Since $p_1 \circ \sigma_\alpha = \id$, $\sigma_\alpha$ is in turn fully determined by $p_2 \circ \sigma_\alpha$, or in other words by the graded functions 
\begin{equation}\label{eq5}
    \sigma_\alpha^a  := \sigma_\alpha^\ast k_{[|\sigma|]}^a \ \in \cifty_\calM(V_\alpha)_{|k^a| + |\sigma|}.
\end{equation}
Using (\ref{eq_muchos}), these graded functions transform as
\begin{equation}\label{eq3}
    \rest{\sigma_\alpha^a}{V_{\alpha\beta}} = \sigma_\alpha^\ast k_{[|\sigma|]}^a 
    = \left(\psi_{\alpha\beta}^{[ |\sigma| ]} \circ \rest{\sigma_\beta}{V_{\alpha\beta}}\right)^\ast k_{[|\sigma|]}^a 
    = \left(\rest{\sigma_\beta}{V_{\alpha\beta}}\right)^\ast \left( (S_{\alpha \beta})\indices{^a_b} k_{[|\sigma|]}^b \right)
    = (S_{\alpha \beta})\indices{^a_b} \,  \sigma_\beta^b |_{V_{\alpha \beta}},
\end{equation}
for $(S_{\alpha \beta})\indices{^a_b} \in \cifty_\calM(V_{\alpha \beta})$. Conversely, any collection of graded functions $\sigma_\alpha^a \in \cifty_\calM(V_\alpha)_{|k^a| + \ell}$ for the same $\ell \in \Z$ that transform according to (\ref{eq3}) defines a section map $\sigma : \rest{\calM}{V} \mapsto \calE[\ell]$ via (\ref{eq4}) and (\ref{eq5}).

We may now define the $\cifty_\calM$-module structure like so: for any $\sigma, \tau \in \scrS(V)$, $|\sigma| = |\tau|$, and any $f \in \cifty_\calM(V)$ we set 
\begin{align}
(\sigma + \tau)_\alpha^a &:= \sigma_\alpha^a + \tau_\alpha^a \\
(f \cdot \sigma)_\alpha^a &:= (-1)^{|f||\sigma|} \sigma_\alpha^a \, f. \label{eq6}
\end{align}
It is easy to see that these indeed transform according to (\ref{eq3}). The extra sign in (\ref{eq6}) is needed for associativity of the module action:
\begin{align}
    (f\cdot(g\cdot\sigma))_\alpha^a &= (-1)^{|f|(|g| + |\sigma|)} (g \cdot \sigma)_\alpha^a \, f = (-1)^{|f|(|g| + |\sigma|)} (-1)^{|g||\sigma|}\sigma_\alpha^a \, g \, f = (-1)^{(|f| + |g|)|\sigma|} \sigma_\alpha^a \, f \, g \nonumber \\
    &= ((fg)\cdot \sigma)_\alpha^a.
\end{align}
That restrictions on $\scrS$ are compatible with this $\cifty_\calM$-module structure is a straightforward verification, hence $\scrS$ is a sheaf of $\cifty_\calM$-modules.
For every $\alpha \in I$ we may find a frame for $\scrS(U_\alpha)$ as $\{\delta_{a}^{(\alpha)}\}_{a = 1}^r$, where $(\delta^{(\alpha)}_a)^b_\alpha = \delta\indices{_a^b}$. Note that $\delta_a^{(\alpha)} : \rest{\calM}{U_\alpha} \to \calE[-|k^a|]$. For every $\sigma : \rest{\calM}{U_\alpha} \to \calE[|\sigma|]$ we then have
\begin{equation}
    \sigma_\alpha^a = \delta\indices{_b^a} \, \sigma_\alpha^b = (\delta_b^{(\alpha)})^a_\alpha \, \sigma_\alpha^b = (\sigma_\alpha^b \cdot \delta_b^{(\alpha)})_\alpha^a \implies \sigma = \sigma_\alpha^b \cdot \delta_b^{(\alpha)}.
\end{equation}
Recall that $\clin_\calE(U_\alpha) \subseteq \cifty_\calE(\uline{\pi}^{-1}(U_\alpha))$ is generated by $\{(\psi_\alpha^{-1})^\ast k^a\}_{a = 1}^r$, and that $\Gamma_\calE(U_\alpha) = \clin_\calE(U_\alpha)^\ast$. Denote the dual frame as $\{k_a^{(\alpha)}\}_{a = 1}^r$. The isomorphism $\scrS(U_\alpha) \cong \Gamma_\calE(U_\alpha)$ is given by 
\begin{equation}\label{eq_loc_iso_sections}
    \delta_a^{(\alpha)} \mapsto  (-1)^{|k^a|} \, k_a^{(\alpha)}
\end{equation}
(no sum) and extending by $\cifty_\calM$-linearity. Using (\ref{eq3}) we find
\begin{equation}
    (\delta^{(\beta)}_a)^b_\alpha = (S_{\alpha \beta})\indices{^b_c} (\delta_a^{(\beta)})^c_\beta = (S_{\alpha \beta})\indices{^b_a}= (\delta_c^{(\alpha)})^b_\alpha \,(S_{\alpha \beta})\indices{^c_a} = (-1)^{(|k^c| - |k^a|) |k^c|}( (S_{\alpha \beta})\indices{^c_a} \cdot \delta^{(\alpha)}_c  )^b_\alpha
\end{equation}
that is
\begin{equation}
    \delta^{(\beta)}_a = (-1)^{(|k^b| - |k^a|) |k^b|} (S_{\alpha \beta})\indices{^b_a} \cdot \delta^{(\alpha)}_b = (-1)^{|k^b| + |k^a||k^b|} (S_{\alpha \beta})\indices{^b_a} \cdot \delta^{(\alpha)}_b
\end{equation}
on the other hand
\begin{equation}
   k^a_{(\beta)} = (\psi_\alpha^{-1})^\ast (\psi_\beta^{-1}\psi_\alpha)^\ast k^a = (S_{\beta \alpha})\indices{^a_b} \, k_{(\alpha)}^b
\end{equation}
and if 
\begin{equation}
    k_a^{(\beta)} = (S^\lor_{\beta \alpha})\indices{_a^b} \, k^{(\alpha)}_b,
\end{equation}
then
\begin{align}
    k_b^{(\alpha)} k_{(\beta)}^a &= (S^\lor_{\alpha \beta})\indices{_b^c}k_c^{(\beta)} k_{(\beta)}^a = (S_{\alpha \beta}^\lor)\indices{_b^a} \\
    &= k_b^{(\alpha)} (S_{\beta \alpha})\indices{^a_c} \, k^c_{(\alpha)} = (-1)^{|k^b|(|k^a| - |k^c|)}\, (S_{\beta\alpha})\indices{^a_c} k_b^{(\alpha)} k^c_{(\alpha)} = (-1)^{|k^b|(|k^a| - |k^b|)} \, (S_{\beta \alpha})\indices{^a_b},
\end{align}
and therefore
\begin{equation}
    k_a^{(\beta)} = (-1)^{|k^a|(|k^b| - |k^a|)} \, (S_{\alpha \beta})\indices{^b_a} \, k^{(\alpha)}_b = (-1)^{|k^a| + |k^a||k^b|} (S_{\alpha \beta})\indices{^b_a} \, k^{(\alpha)}_b.
\end{equation}
The isomorphism given locally by (\ref{eq_loc_iso_sections}) therefore agrees on overlaps, since
\begin{equation}
    \delta_a^{(\beta)} = (-1)^{|k^b| + |k^a||k^b|} (S_{\alpha \beta})\indices{^b_a} \cdot \delta^{(\alpha)}_b \mapsto (-1)^{|k^a||k^b|} (S_{\alpha \beta})\indices{^b_a}  k^{(\alpha)}_b = (-1)^{|k^a|} \, k_a^{(\beta)}.
\end{equation}
This completes the proof.
\end{proof}
\printbibliography

@Article{2011RvMaP..23..669C,
  author =        {{Cattaneo}, A.~S. and {Sch{\"a}tz}, F.},
  title =         {{Introduction to Supergeometry}},
  journal =       {Reviews in Mathematical Physics},
  year =          {2011},
  volume =        {23},
  pages =         {669-690},
  archiveprefix = {arXiv},
  eprint =        {1011.3401}
}

@Book{bartocci2012geometry,
  title =     {The Geometry of Supermanifolds},
  publisher = {Springer Science \& Business Media},
  year =      {2012},
  author =    {Bartocci, Claudio and Bruzzo, Ugo and Hern{\'a}ndez-Ruip{\'e}rez, Daniel},
  volume =    {71}
}

@Book{conlon2001differentiable,
  title =     {Differentiable manifolds},
  publisher = {Birkhäuser Boston},
  year =      {2001},
  author =    {Conlon, Lawrence}
}

@Article{fairon2017introduction,
  author =        {Fairon, Maxime},
  title =         {Introduction to graded geometry},
  journal =       {European Journal of Mathematics},
  year =          {2017},
  volume =        {3},
  number =        {2},
  pages =         {208--222},
  archiveprefix = {arXiv},
  eprint =        {1512.02810},
  publisher =     {Springer}
}

@Article{grabowski2009higher,
  author =        {Grabowski, Janusz and Rotkiewicz, Miko{\l}aj},
  title =         {Higher vector bundles and multi-graded symplectic manifolds},
  journal =       {Journal of Geometry and Physics},
  year =          {2009},
  volume =        {59},
  number =        {9},
  pages =         {1285--1305},
  archiveprefix = {arXiv},
  eprint =        {math/0702772}
}

@Article{Kontsevich:1997vb,
  author =        {Kontsevich, Maxim},
  title =         {{Deformation quantization of Poisson manifolds. 1.}},
  journal =       {Letters in Mathematical Physics},
  year =          {2003},
  volume =        {66},
  pages =         {157-216},
  archiveprefix = {arXiv},
  eprint =        {q-alg/9709040}
}

@Article{kotov2024category,
  author =        {Kotov, Alexei and Salnikov, Vladimir},
  title =         {{The category of $\mathbb{Z}$-graded manifolds: What happens if you do not stay positive}},
  journal =       {Differential Geometry and its Applications},
  year =          {2024},
  volume =        {93},
  pages =         {102109},
  archiveprefix = {arXiv},
  eprint =        {2108.13496},
  publisher =     {Elsevier}
}

@Book{manin1988gauge,
  title =     {Gauge Field Theory and Complex Geometry},
  publisher = {Springer-Verlag},
  year =      {1988},
  author =    {Manin, Y. I.},
  volume =    {289}
}

@Article{mehta2006supergroupoids,
  author =        {Mehta, Rajan Amit},
  title =         {{Supergroupoids, double structures, and equivariant cohomology}},
  year =          {2006},
  archiveprefix = {arXiv},
  eprint =        {math/0605356}
}

@Book{munkres2000topology,
  title =     {Topology},
  publisher = {Prentice Hall, Incorporated},
  year =      {2000},
  author =    {Munkres, J.R.},
  series =    {Featured Titles for Topology}
}

@Book{nestruev2003smooth,
  title =     {Smooth Manifolds and Observables},
  series = {Graduate Texts in Mathematics vol. 220},
  publisher = {Springer},
  year =      {2003},
  author =    {Nestruev, Jet}
}

@Article{severa2001some,
  author =        {\v{S}evera, Pavol},
  title =         {Some title containing the words ``homotopy'' and ``symplectic'', e.g. this one},
  year =          {2001},
  archiveprefix = {arXiv},
  eprint =        {math/0105080}
}

@Article{Voronov:2019mav,
  author =        {Voronov, Theodore Th.},
  title =         {{Graded Geometry, $Q$-Manifolds, and Microformal Geometry}},
  journal =       {Fortschritte der Physik},
  year =          {2019},
  volume =        {67},
  number =        {8-9},
  pages =         {1910023},
  archiveprefix = {arXiv},
  eprint =        {1903.02884}
}

@Article{Vysoky:2022gm,
  author =        {Vysok\'{y}, Jan},
  title =         {{Global Theory of Graded Manifolds}},
  journal =       {Reviews in Mathematical Physics},
  year =          {2022},
  volume =        {34},
  number =        {10},
  pages =         {2250035},
  archiveprefix = {arXiv},
  eprint =        {2105.02534},
  keywords =      {mine}
}

@Article{vysoky2022graded,
  author =        {Vysok{\'y}, Jan},
  title =         {{Graded Generalized Geometry}},
  journal =       {Journal of Geometry and Physics},
  year =          {2022},
  volume =        {182},
  pages =         {104683},
  archiveprefix = {arXiv},
  eprint =        {2203.09533},
  keywords =      {mine},
  publisher =     {Elsevier}
}

@Article{vysoky2024graded,
  author        = {Vysok{\'y}, Jan},
  journal       = {Journal of Geometry and Physics},
  title         = {{Graded Jet Geometry}},
  year          = {2024},
  pages         = {105250},
  archiveprefix = {arXiv},
  eprint        = {2311.15754},
  publisher     = {Elsevier},
}

@Book{walschap2012metric,
  title =     {{Metric Structures in Differential Geometry}},
  publisher = {Springer New York},
  year =      {2012},
  author =    {Walschap, G.},
  series =    {Graduate Texts in Mathematics}
}

@article{Balduzzi_2011,
year = {2011},
month = {mar},
publisher = {},
volume = {284},
number = {1},
pages = {012010},
author = {Balduzzi, L. and Carmeli, C. and Cassinelli, G.},
title = {{Super Vector Bundles}},
journal = {Journal of Physics: Conference Series},
}

@article{serre1955faisceaux,
  title={{Faisceaux Alg{\'e}briques Coh{\'e}rents}},
  author={Serre, Jean-Pierre},
  journal={Annals of Mathematics},
  volume={61},
  number={2},
  pages={197--278},
  year={1955},
  publisher={JSTOR}
}

@article{swan1962vector,
  title={{Vector bundles and projective modules}},
  author={Swan, Richard G},
  journal={Transactions of the American Mathematical Society},
  volume={105},
  number={2},
  pages={264--277},
  year={1962},
  publisher={JSTOR}
}

@book{deligne1999quantum,
  title={{Quantum Fields and Strings: A Course for Mathematicians}},
  author={Deligne, Pierre and Etingof, Pavel and Freed, Daniel S and Jeffrey, Lisa C and Kazhdan, David and Morgan, John W and Morrison, David R and Witten, Edward},
  volume={1},
  year={1999},
  publisher={American Mathematical Society}
}

@incollection{bruzzo1988supermanifolds,
  title={{Supermanifolds, supermanifold cohomology, and super vector bundles}},
  author={Bruzzo, Ugo},
  booktitle={{Differential Geometrical Methods in Theoretical Physics}},
  pages={417--440},
  year={1988},
  publisher={Springer}
}

@article{morye2025serre,
  title={{The Serre-Swan Theorem in supergeometry}},
  author={Morye, Archana S and Soman, Abhay and Devichandrika, V},
  archiveprefix = {arXiv},
  eprint        = {2503.01249},
  year={2025}
}

@article{morye2017serre,
  title={{The Serre-Swan Theorem for Ringed Spaces}},
  author={Morye, Archana S},
  journal={Analytic and Algebraic Geometry},
  pages={207--223},
  year={2017},
  publisher={Springer}
}

@article{bruce2016introduction,
  title={{Introduction to graded bundles}},
  author={Bruce, Andrew J and Grabowska, Katarzyna and Grabowski, Janusz},
  archiveprefix = {arXiv},
  eprint = {1605.03296},
  year={2016}
}

@misc{bruceGrabowski2024,
      title={{Principal bundles in the category of $\mathbb{Z}_2^n$-manifolds}}, 
      author={Andrew James Bruce and Janusz Grabowski},
      year={2024},
      eprint={2412.12652},
      archivePrefix={arXiv},
}

@misc{grabowskaGrabowski2024,
      title={{Homogeneity supermanifolds and homogeneous Darboux theorem}}, 
      author={Katarzyna Grabowska and Janusz Grabowski},
      year={2024},
      eprint={2411.00537},
      archivePrefix={arXiv},
}

@article{grabowskiRotkiewicz_2012,
   title={{Graded bundles and homogeneity structures}},
   volume={62},
   number={1},
   journal={Journal of Geometry and Physics},
   publisher={Elsevier BV},
   author={Grabowski, Janusz and Rotkiewicz, Mikołaj},
   year={2012},
   pages={21–36} }
\appendix
\section{Fundamental Theorem for GVB morphisms}
Let us prove Proposition \ref{tvrz_fiberinjsur}. We shall first prove its simplified version:

\begin{proposition} \label{prop_GVBfiberisoGVBiso}
Let $F: \scrS \rightarrow \scrP$ be a GVB morphism. Let $m \in M$ be a given point. Then the induced linear map $F_{(m)}: \scrS_{(m)} \rightarrow \scrP_{(m)}$ is an isomorphism, iff there exists $U \in \Op_{m}(M)$, such that $F|_{U}: \scrS|_{U} \rightarrow \scrP|_{U}$ is a GVB isomorphism.
\end{proposition}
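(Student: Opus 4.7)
The plan is to prove both directions, with the ``only if'' direction being the substantive content.

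\textbf{The easy direction.} Suppose $F|_U: \scrS|_U \to \scrP|_U$ is a GVB isomorphism for some $U \in \Op_m(M)$. The assignment $F \mapsto F_{(m)}$ respects compositions (since (\ref{eq_Fmonvalueofsection}) is natural) and sends identity to identity, so it takes isomorphisms to isomorphisms. Applied to $F|_U$ and its inverse, this shows $F_{(m)}$ is an isomorphism.

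\textbf{The main direction.} Assume $F_{(m)}$ is an isomorphism of graded vector spaces. Since graded rank of a GVB equals the graded dimension of any fiber, this forces $\scrS$ and $\scrP$ to have the same graded rank $(r_j)$. The strategy is to produce a local frame on $\scrP$ that is the image under $F$ of a local frame on $\scrS$, and then invert $F$ on generators.

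Concretely, pick $V \in \Op_m(M)$ small enough that there exists a local frame $s_1, \dots, s_r$ for $\scrS$ over $V$. Set $t_i := F_V(s_i) \in \scrP(V)$ for each $i \in \{1, \dots, r\}$. By (\ref{eq_Fmonvalueofsection}), one has $t_i|_m = F_{(m)}(s_i|_m)$; since $s_1|_m, \dots, s_r|_m$ is a basis of $\scrS_{(m)}$ and $F_{(m)}$ is an isomorphism, the collection $t_1|_m, \dots, t_r|_m$ is a basis of $\scrP_{(m)}$. In particular these vectors are linearly independent. Now invoke Proposition \ref{prop_localframecompl} applied to the full-length collection $t_1, \dots, t_r$ inside $\scrP(V)$: there exists $U \in \Op_m(V)$ such that $t_1|_U, \dots, t_r|_U$ is a local frame for $\scrP$ over $U$ (note that $r = \grk(\scrP)$, so no completion is actually needed).

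\textbf{Constructing the inverse.} On this $U$, both $s_1|_U, \dots, s_r|_U$ and $t_1|_U, \dots, t_r|_U$ are local frames for $\scrS|_U$ and $\scrP|_U$, respectively, and $F_U$ sends the first to the second. Define $G_{0}: \scrP(U) \rightarrow \scrS(U)$ to be the unique $\cifty_{\calM}(U)$-linear map satisfying $G_{0}(t_i|_U) := s_i|_U$ for all $i \in \{1,\dots,r\}$. By Proposition \ref{prop_GVBsoveridentity}, $G_0$ extends to a unique GVB morphism $G: \scrP|_U \to \scrS|_U$. Since $F_U \circ G_0$ and $G_0 \circ F_U$ both fix the respective frame elements, they coincide with the identities on modules of global sections over $U$; by Proposition \ref{prop_GVBsoveridentity} again, $F|_U \circ G = \1_{\scrP|_U}$ and $G \circ F|_U = \1_{\scrS|_U}$. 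Hence $F|_U$ is a GVB isomorphism.

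\textbf{Main obstacle.} There is no serious obstacle: the crux is entirely packaged in Proposition \ref{prop_localframecompl}, which guarantees that sections whose values at $m$ are linearly independent can be upgraded to a local frame. The only point requiring slight care is noticing that once the target collection $t_1|_m, \dots, t_r|_m$ already has the full dimension of $\scrP_{(m)}$, Proposition \ref{prop_localframecompl} yields a local frame of length exactly $r$, so no extra generators are introduced, and $F$ is then automatically a bijection on frames.
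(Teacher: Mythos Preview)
Your argument is clean, but it is circular within the paper's logical structure. You invoke Proposition \ref{prop_localframecompl} (local frame completion) to upgrade the collection $t_1,\dots,t_r$ to a local frame near $m$. However, the paper's proof of Proposition \ref{prop_localframecompl} constructs an auxiliary GVB endomorphism whose fiber map at $m$ is the identity and then appeals to Proposition \ref{tvrz_fiberinjsur} to conclude it is a local isomorphism. Proposition \ref{tvrz_fiberinjsur} in turn is proved in Appendix A, and its proof begins by establishing precisely the proposition you are attempting to prove. So the dependency chain is
\[
\text{Prop.~\ref{prop_GVBfiberisoGVBiso}} \;\Longrightarrow\; \text{Prop.~\ref{tvrz_fiberinjsur}} \;\Longrightarrow\; \text{Prop.~\ref{prop_localframecompl}},
\]
and you are using the last to prove the first.

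This is why the paper gives a direct proof: it writes $F$ in local frames as a matrix $\fF$ of graded functions, splits off the degree-zero block $\fE$, observes that $\det(\fE)(m)\neq 0$ because $F_{(m)}$ is invertible, and then inverts $\f1+\fF'$ (the nilpotent-like remainder) via a Neumann series that terminates weight-by-weight. This matrix-level argument is the actual content of the proposition; once it is in hand, local frame completion follows as a corollary, not the other way around. Your ``main obstacle'' paragraph says the crux is packaged in Proposition \ref{prop_localframecompl}, but that package is sealed with the very statement under discussion.
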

\begin{proof}
Let us start with the only if part. Choose $U \in \Op_{m}(M)$ and local frames $s_{1}, \dots, s_{r}$ and $t_{1}, \dots, t_{q}$ for $\scrS$ and $\scrP$ over $U$, respectively. Since $\grk(\scrS) = \grk(\scrP)$ by assumption, we may assume that $r = q$ and $|s_{i}| = |t_{i}|$ for all $i \in \{1, \dots, r \}$. Hence we can write
\begin{equation}
    F_{U}(s_{i}) = \fF_{i}{}^{k} \cdot t_{k},
\end{equation}
where $\fF_{i}{}^{k}$ are functions in $\cifty_{\calM}(U)$. We look for a $\cifty_{\calM}(U)$-linear map $G_{U}: \scrP(U) \rightarrow \scrS(U)$ forming the inverse to $F_{U}$. One can write
\begin{equation}
    G_{U}(t_{k}) = \fG_{k}{}^{i} \cdot s_{i}.
\end{equation}
Let $\fF$ and $\fG$ denote the respective matrices of functions (the leftmost index is always the row index, regardless of its vertical position). The requirement of $G_{U}$ being the two-sided inverse to $F_{U}$ then translates into the pair of matrix equations
\begin{equation} \label{eq_fGinversetofF}
    \fF \fG = \f1, \; \; \fG \fF = \f1,
\end{equation}
where $\f1$ denotes the $r \times r$ unit matrix. First, let
\begin{equation}
    \fE_{i}{}^{k} := \left\{ \begin{array}{cc}
    \fF_{i}{}^{k} & \text{ if } |t_{i}| = |t_{k}| \\
    0 & \text{ otherwise},
    \end{array} \right.
\end{equation}
for all $i,k \in \{1,\dots,r\}$. Since $|\fF_{i}{}^{k}| = |t_{i}| - |t_{k}|$, the matrix $\fE$ is defined to contain precisely the degree zero elements of $\fF$. The induced map $F_{(m)}$ can be written in terms of the matrix $\fF$ and $\fE$ as 
\begin{equation} \label{eq_FmusingE}
    F_{(m)}(s_{i}|_{m}) = \fF_{i}{}^{k}(m) \cdot t_{k}|_{m} = \fE_{i}{}^{k}(m) \cdot t_{k}|_{m}.
\end{equation}
The first equality follows from (\ref{eq_Fmonvalueofsection}), the second from the fact that the value of any function of non-zero degree at any point is always zero. Since $\fE$ consists only of degree zero functions, we can unambiguously define a degree zero smooth function $\det(\fE)$. Since $F_{(m)}$ is assumed to be invertible, it follows from (\ref{eq_FmusingE}) that
\begin{equation}
    \det(\fE)(m) \neq 0.
\end{equation}
Since $\det(\fE) \in \cifty_{\calM}(U)$, this must be true also in some neighborhood of $m$. Without the loss of generality, we can assume that this neighborhood is the whole $U$. One can then use the standard algebraic formula to find the inverse of $\fE$:
\begin{equation}
    \fE^{-1} = \frac{1}{\det(\fE)} \text{adj}(\fE)
\end{equation}
This means that $\fF$ can be written as 
\begin{equation}
    \fF = \fE(\f1 + \fF'),
\end{equation}
where all non-zero entries of $\fF'$ have a \textit{non-zero} degree. It now suffices to find a matrix $\fP$ satisfying 
\begin{equation} \label{eq_fPmatrix}
    \fP (\f1 + \fF') = (\f1 + \fF') \fP = \f1,
\end{equation}
Without the loss of generality, we can assume that all matrices take values in the graded algebra $\cifty_{(m_{j})}(U)$, see Example \ref{example_graded_domain} and the notation introduced therein. We claim that $\fP$ can be found in terms of the Neumann series
\begin{equation}
    \fP = \sum_{n=0}^{\infty} (-1)^{n} \fF'^{n}.
\end{equation}
In particular, we must make sense of the infinite sum. For each $\fp \in \N^{\tilde{m}}_{k}$, define $w(\fp) := \sum_{i=1}^{\tilde{m}} p_{i}$. For each $i,k \in \{1,\dots,r\}$, the component $[\fF'^{n}]_{k}{}^{i}$ is a well-defined function of degree $|t_{k}| - |t_{i}|$ and a sum of $n$-fold products of elements of $\fF'$. Since all non-zero entries of $\fF'$ have a non-zero degree, this implies
\begin{equation}
    ([\fF'^{n}]_{k}{}^{i})_{\fp} = 0
\end{equation}
for any $\fp \in \N^{\tilde{m}}_{|t_{k}| - |t_{i}|}$ with $w(\fp) < n$. This means that, strictly speaking, we should have defined 
\begin{equation}
    (\fP_{k}{}^{i})_{\fp} := \sum_{n=0}^{w(\fp)} (-1)^{n} ([\fF'^{n}]_{k}{}^{i})_{\fp}.
\end{equation}
Next, let us calculate the components of the product $\fP \fF'$. For each $i,k \in \{1,\dots,r\}$, write $\Delta_{ki} := |t_{k}| - |t_{i}|$. Then for each $\fp \in \N^{\tilde{m}}_{\Delta_{ki}}$, one finds
\begin{equation}
   ([\fP \fF']_{k}{}^{i})_{\fp} = \sum_{\bld{r}\leq \bld{p}} \, \epsilon^{\bld{r}, \bld{p} - \bld{r}} (\fP_{k}{}^{j})_{\fr} (\fF'_{j}{}^{i})_{\fp - \fr} = \sum_{\bld{r}\leq \bld{p}} \, \epsilon^{\bld{r}, \bld{p} - \bld{r}} \{ \sum_{n=0}^{w(\fr)} (-1)^{n} ([\fF'^{n}]_{k}{}^{j})_{\fr} \} (\fF'_{j}{}^{i})_{\fp - \fr}.
\end{equation}
We have omitted the explicit writing of the fact that $\fr \in \N^{\tilde{m}}_{\Delta_{kj}}$ in the respective sum. Now, since $\fr \leq \fp$, we have also $w(\fr) \leq w(\fp)$ and we can thus replace the upper bound of each sum over $n$ by $w(\fp)$, the added terms are trivial anyways. One can thus write
\begin{equation}
    ([\fP \fF']_{k}{}^{i})_{\fp} = \sum_{n=0}^{w(\fp)} (-1)^{n} \sum_{\bld{r}\leq \bld{p}} \, \epsilon^{\bld{r}, \bld{p} - \bld{r}} ([\fF'^{n}]_{k}{}^{j})_{\fr} (\fF_{j}{}^{i})_{\fp - \fr} = \sum_{n=0}^{w(\fp)-1} (-1)^{n} ( [\fF'^{n+1}]_{k}{}^{i})_{\fp}.
\end{equation}
In the last step, we lowered the upper bound of the sum by one, since the last summand was trivial anyways. We thus have
\begin{equation}
    ([\fP( \f1 + \fF')]_{k}{}^{i})_{\fp} = \sum_{n=0}^{w(\fp)} (-1)^{n} ([\fF'^{n}]_{k}{}^{i})_{\fp} +  \sum_{n=1}^{w(\fp)} (-1)^{n-1} ( [\fF'^{n}]_{k}{}^{i})_{\fp} = (\f1_{k}{}^{i})_{\fp}.
\end{equation}
Note that one can repeat the procedure to show that $\fP$ is in fact a \textit{two-sided} inverse to $\f1 + \fF'$. In other words, both conditions in (\ref{eq_fPmatrix}) are satisfied. Now, if we define $\fG = \fP \fE^{-1}$ and $\fG' = \fE^{-1} \fP$, they satisfy the matrix equations
\begin{equation}
    \fF \fG = \f1, \; \; \fG' \fF = \f1.
\end{equation}
Since left and right inverses, if they both exist, are always equal to each other and unique, we have just found the unique solution to (\ref{eq_fGinversetofF}). Consequently $G_{U}$ is the two-sided inverse to $F_{U}$, which can be uniquely extended to the GVB morphism $G: \scrP|_{U} \rightarrow \scrS|_{U}$ by means of Proposition \ref{prop_GVBsoveridentity}, forming the two-sided inverse to $F|_{U}$. This finishes the only if part of the proof. 

Conversely, suppose that there is $U \in \Op_{m}(M)$, such that $F|_{U}$ is a GVB isomorphism. This implies that the induced map of stalks $F_{m}: \scrS_{m} \rightarrow \scrP_{m}$ is an isomorphism. This follows e.g. from Proposition \ref{eq_GVBinjectiveequivalent} together with Proposition \ref{eq_GVBsurjectiveequivalent}. But then so is the induced map of the quotients $F_{(m)}: \scrS_{m}/ (\frJ_{m} \cdot \scrS_{m}) \rightarrow \scrP_{m} / (\frJ_{m} \cdot \scrP_{m})$. This finishes the proof. 
\end{proof}

\begin{proof}[\textbf{The proof of Proposition \ref{tvrz_fiberinjsur}}]
    Let us only prove $\textit{1}$. Suppose $F_{(m)}$ is injective. Choose $U \in \Op_{m}(M)$ and local frames $s_{1} \dots, s_{r}$ and $t_{1} \dots, t_{q}$ for $\scrS$ and $\scrP$ over $U$, respectively. It follows from the assumption that $r \leq q$. We can thus write 
    \begin{equation}
        F_{U}(s_{i}) = \fF_{i}{}^{k} \cdot t_{k},
    \end{equation}
    to obtain an $r \times q$ matrix of functions $\fF$. It follows from the assumption and (\ref{eq_Fmonvalueofsection}) that $\fF_{i}{}^{k}(m)$ form a components of a matrix of the injective linear map $F_{(m)}$. But such a matrix contains an invertible $r \times r$  submatrix. We can thus relabel the frame for $\scrP$ to $s'_{1}, \dots, s'_{r}, t'_{1}, \dots, t'_{q-r}$ satisfying $|s_{i}| = |s'_{i}|$ for all $i \in \{1, \dots, r\}$ and write 
    \begin{equation}
        F_{U}(s_{i}) = \fF'_{i}{}^{k} \cdot s'_{k} + \fF''_{i}{}^{\alpha} \cdot t'_{\alpha},
    \end{equation}
    where the $r \times r$ matrix $\fF'_{i}{}^{k}(m)$ is invertible. It follows from the proof or Proposition \ref{prop_GVBfiberisoGVBiso} that this matrix has (after possibly shrinking $U$ when necessary) a two-sided inverse $\fG'$, that is 
    \begin{equation}
        \fF' \fG' = \f1, \; \; \fG' \fF' = \f1,
    \end{equation}
    where $\f1$ denotes the unit $r \times r$ matrix. We now define $\cifty_{\calM}(U)$-linear map $G_{U}: \scrP(U) \rightarrow \scrS(U)$ as 
    \begin{equation}
        G_{U}(s'_{i}) := \fG'_{i}{}^{k} s_{k}, \; \; G_{U}(t'_{\alpha}) := 0.
    \end{equation}
    It is easy to see that $G_{U}$ is a left inverse to $F_{U}$, which can be uniquely extended to a GVB morphism $G: \scrP|_{U} \rightarrow \scrS|_{U}$  by means of Proposition \ref{prop_GVBsoveridentity}, forming a left inverse to $F|_{U}$. This proves the only if part of \textit{1}. 
    
    Conversely, suppose there is $U \in \Op_{m}(M)$, such that $F|_{U}$ has the left inverse $G: \scrP|_{U} \rightarrow \scrS|_{U}$. The identity $G \circ F|_{U} = \1_{\scrS|_{U}}$ immediately implies $G_{m} \circ F_{m} = \1_{\scrS_{m}}$. This follows from the fact how the induced maps of stalks respect compositions. But from the construction of the induced maps of fibers, this identity also immediately implies $G_{(m)} \circ F_{(m)} = \1_{\scrS_{(m)}}$. This immediately forces $F_{(m)}$ to be injective and the proof of \textit{1} is finished. We omit the proof of \textit{2} since it uses very similar arguments.
\end{proof}
\end{document}